\DeclareFontFamily{U}{txsyc}{}
\DeclareFontShape{U}{txsyc}{m}{n}{
   <-> txsyc%
}{}
\DeclareFontShape{U}{txsyc}{bx}{n}{
   <-> txbsyc%
}{}
\DeclareFontShape{U}{txsyc}{l}{n}{<->ssub * txsyc/m/n}{}
\DeclareFontShape{U}{txsyc}{b}{n}{<->ssub * txsyc/bx/n}{}
\DeclareSymbolFont{symbolsC}{U}{txsyc}{m}{n}
\DeclareMathSymbol{\df}{\mathrel}{symbolsC}{"42}
\DeclareMathSymbol{\fd}{\mathrel}{symbolsC}{"43}
\DeclareMathSymbol{\lJoin}{\mathrel}{symbolsC}{"58}
\DeclareMathSymbol{\rJoin}{\mathrel}{symbolsC}{"59}
\newcommand{\cB}{{\cal B}}
\newcommand{\cC}{{\mathbf C}}
\newcommand{\cD}{{\discret{D}}}
\newcommand{\cS}{{\cal S}}
\newcommand{\cX}{{\discret X}}
\newcommand{\CC}{\mathbb{C}}
\newcommand{\EE}{\mathbb{E}}
\newcommand{\LL}{\mathbb{L}}
\newcommand{\NN}{\mathbb{N}}
\newcommand{\PP}{\mathbb{P}}
\newcommand{\RR}{\mathbb{R}}
\newcommand{\ZZ}{\mathbb{Z}}
\newcommand{\fe}{\mathfrak{e}}
\newcommand{\fm}{\mathfrak{m}}
\newcommand{\fn}{\mathfrak{n}}
\newcommand{\fX}{\mathbb{X}}
\newcommand{\di}{\displaystyle}
\newcommand{\iy}{\infty}
\newcommand{\lt}{\left}
\newcommand{\me}{\medskip}
\newcommand{\na}{\nabla}
\newcommand{\pa}{\partial}
\newcommand{\ri}{\rightarrow}
\newcommand{\rt}{\right}
\newcommand{\sm}{\smallskip}
\newcommand{\wi}{\widetilde}
\newcommand{\wit}{\widehat}
\newcommand{\Ent}{\mathrm{Ent}}
\newcommand{\fo}{\forall\ }
\newcommand{\Id}{\mathrm{Id}}
\newcommand{\lve}{\lt\vert}
\newcommand{\rve}{\rt\vert}
\newcommand{\st}{\,:\,}
\newcommand{\un}{\mathds{1}}
\newcommand{\bq}{\begin{eqnarray*}}
\newcommand{\bqn}[1]{\begin{eqnarray}\label{#1}}
\newcommand{\eq}{\end{eqnarray*}}
\newcommand{\eqn}{\end{eqnarray}}
\newcommand{\wwtbp}{\par\hfill $\blacksquare$\par\me\noindent}
\newcommand{\thistitlepagestyle}{}
\newcommand{\ttsim}{\raise.17ex\hbox{$\scriptstyle\mathtt{\sim}$}}
\newtheorem{pro}{Proposition}
\newtheorem{proposition}{Proposition}
\newtheorem{cor}[pro]{Corollary}
\newtheorem{lem}[pro]{Lemma}
\newtheorem{theorem}[pro]{Theorem}
\newtheorem{remark}[pro]{Remark}
\newtheorem{lemma}[pro]{Lemma}
\renewcommand{\thepro}{\arabic{pro}}
\newenvironment{rem}
{\par\me\refstepcounter{pro}\noindent{\bf Remark \thepro\ }}
{\par\hfill $\square$\par\sm\noindent}
\newcommand{\proof}{\par\me\noindent\textbf{Proof}\par\sm\noindent}
\newcommand{\dBg}{{\discret{G}}} 
\newcommand{\Bg}{G}   
\newcommand{\dLg}{{\discret{L}}} 
\newcommand{\Lg}{L}   
\newcommand{\dBs}{{\discret{Q}}} 
\newcommand{\Bs}{Q}   
\newcommand{\dLs}{{\discret{K}}} 
\newcommand{\Ls}{K}   
\newcommand{\esP}{\mathbf{P}}
\newcommand{\esD}{\mathbf{D}}
\newcommand{\esL}{\mathbf{L}}
\newcommand{\esLd}{\ell}
\newcommand{\esF}{\mathbf{F}}
\newcommand{\esC}{\mathbf{C}}
\newcommand{\esc}{\mathbf{c}}
\newcommand{\esPe}{\esP_{\! \fe}}
\newcommand{\esFf}{\esF_{\hskip-.3mm\mathrm{f}}}
\newcommand{\Hg}{{\rm{H}}_{\beta}}
\newcommand{\Jg}{{\rm{J}}_{\beta}}
\newcommand{\Cp}{{C_{0}(\RR_{+})}}
 \renewcommand{\L}{\mathbf{L}}
    \def\P{{\mathbb P}}
    \def\E{{\mathbb E}}
    \def\N{{\mathbb N}}
    \def\i{\textnormal {i}}
    \def\R{{\mathbb R}}
\newcommand{\meKp}{{\mathbf{n}_{\beta}}}
\newcommand{\discret}[1]{\mathbb{#1}}
\newcommand{\cont}[1]{{#1}}
\newcommand{\Ks}{\discret{K}}
\newcommand{\eigL}{\mathbb{L}_k}
\newcommand{\Lnu}{{\L^{2}(\nu_{\beta})}}
\newcommand{\Lv}{\esL^2(\mu_{\beta})}
\newcommand{\Lpv}{\esLd^2(\fm_{\beta})}
\newcommand{\adjL}{\Lambda^*}
\newcommand{\Gate}[3]{#1\stackrel{#2}{\curvearrowright}#3}
\title{On a gateway between continuous
and discrete Bessel and Laguerre  processes \footnote{This work was done when the first named author was visiting Cornell University and he is grateful to the School of ORIE for its hospitality. Both authors would like to thank D.~Golberg, J.~Pander and G.~Samorodnitsky for stimulating discussions.}}
\author{ L.~Miclo  \thanks{Toulouse School of Economics, CNRS, IMT-CEREMATH, Manufacture des Tabacs, 21, Allée de Brienne, 31015 Toulouse cedex 6, France, miclo@math.cnrs.fr.}  \hskip2mm and  P.~Patie \thanks{School of Operations Research and Information Engineering, Cornell University, Ithaca, NY 14853, USA, \hbox{pp396@cornell.edu}}
}
 \date{\vbox{\copy0
\vskip1mm
 \copy1
}
 }
\begin{document}




\maketitle
\thistitlepagestyle
\abstract{
By providing instances of approximation of linear diffusions by birth-death processes, Feller \cite{Feller-Gen}, has offered an original path from the discrete world to the continuous one. In this paper, by identifying an intertwining relationship
between
squared Bessel processes and
some linear birth-death processes, we show that this connection is in fact more intimate and goes in the two directions.  As by-products, we identify some  properties enjoyed by the birth-death family that are inherited from
squared Bessel processes. For instance, these include  a discrete self-similarity property and
a discrete analogue of the beta-gamma algebra.
We proceed by explaining that the same gateway identity also holds for the
corresponding ergodic Laguerre semi-groups.  It follows again that the continuous and discrete versions are more closely related than thought before, and this enables to pass information from one semi-group to the other one.
}
\\

{\small
\textbf{Keywords: } Squared Bessel processes, linear birth-death processes,  intertwining, continuous and discrete scaling,  spectral decomposition.
\par
\vskip.3cm
\textbf{MSC2010:} Primary: 60J60, 	60J10.
 Secondary: 	35P05, 65C40
}
\tableofcontents


\section{Introduction}


In a celebrated paper  \cite{Feller-Gen}, Feller provides a connection between continuous and discrete state space Markov processes by showing rigorously  some diffusion approximations by birth-death Markov chains. Lamperti's work \cite{Lamperti-67b}                       can be seen as a direct continuation and extension of Feller's ideas, introducing and analysing the continuum mass limits of Galton-Watson processes also
  for heavy-tailed offspring distributions.  From these works emerge the following approximation result which relate two central objects of our work. The  semi-group  $\discret{Q}^{(\beta)}=(\discret{Q}^{(\beta)}_t)_{t\geq 0},\: \beta>0$,
of the linear birth-death process $\discret{X}^{(\beta)}=(\discret{X}^{(\beta)}_t)_{t\geq 0}$ on $\ZZ_+$, whose generator
is the following difference operator
\begin{equation}
  \discret{G}_{\beta}=(n+\beta) \pa_+ + n\pa_-,\quad n\in \ZZ_+,
\end{equation}
where $\pa_{\pm}g(n)=g(n\pm 1)-g(n)$,
is an approximation of the diffusion semi-group $\cont{Q}^{(\beta)}=(\cont{Q}^{(\beta)}_t)_{t\geq 0}$ of the (scaled by $2$)-squared Bessel process $\cont{X}^{(\beta)}=(\cont{X}^{(\beta)}_t)_{t\geq 0}$ of index $\beta-1$ on $[0,\infty)$, whose
generator on $\RR_+$  is given by
\bq
 \qquad \Bg_{\beta}&=&x\pa^2+\beta\pa, \quad x>0.\eq

More specifically, one has, with $\lim_{\epsilon \to 0_+} \epsilon{\lfloor n/\epsilon \rfloor}=x$, that
\[\lim_{\epsilon\to 0_+}\discret{Q}_{t/\epsilon}^{(\beta)} \cont{d}_{\epsilon} f (\lfloor n/\epsilon \rfloor)= \cont{Q}^{(\beta)}_t f(x)\]
where $\cont{d}_c f(x)=f(cx)$ is the dilation operator. One can show, by a classical tightness argument,  that the convergence holds in the sense of weak convergence of probability measures on
$\mathbf{D}([0,\infty))$, the Skorokhod space of c\`adl\`ag paths.

The aim of this paper is to reveal  that, in fact, the connection between these two processes
(or their semi-groups) is even more intimate. Indeed, we shall provide
a direct connection  inducing an immediate limiting procedure. To describe it,
we define, for a bounded function $g$ on $\ZZ_+$, the Markov kernel $\Lambda$ by
\begin{equation}\label{eq:Lambda-def}
\Lambda g (x) = \E\left[g({\rm{Pois}}(x))\right], \quad x\geq 0,
\end{equation}
where ${\rm{Pois}}(x)$ is a Poisson random variable of parameter $x$,
and,  for $f$ a bounded and measurable function on $\RR_+$, the Markov kernel $\adjL$ by
\begin{equation}\label{eq:tild-Lambda-def}
\adjL f(n)=\EE[f({\rm{Gam}}(n+\beta))],\quad n\in \ZZ_+,
\end{equation}
where ${\rm{Gam}}(n+\beta)$ is a standard gamma random variable with shape parameter $n+\beta$.
Throughout, for two linear operators, $A$ and $B$, the notation  $\Gate{A}{\Lambda}{B}$ stands
for the intertwining relationship $A\Lambda=\Lambda B$ which holds   on the   specified domain. We also denote by
$\esc_0(\ZZ_+)$ (resp.~$\esC_0(\RR_+)$) the space of measurable (resp.~continuous) functions on $\ZZ_+$ (resp.~$\RR_+$) vanishing at infinity. For a measure $\mu$, we define the Hilbert space $\esL^2(\mu)=\{f :\RR_+\mapsto \R \: \textrm{ measurable with } \int_{0}^{\infty}f^2(x) \mu(dx)<\infty\}$ and when $\mu$ is a discrete measure we write $\ell^2(\mu)$.
\begin{theorem}\label{thm:main}\label{semig}
For any $\beta\geq 0$, we have
\begin{eqnarray}
\Gate{\cont{Q}_t^{(\beta)}}{\Lambda}{\discret{Q}_t^{(\beta)}} \quad \textrm{ in $\esc_0(\ZZ_+)$ }
\textrm{  and  } \quad
\Gate{\discret{Q}_t^{(\beta)}}{\adjL}{\cont{Q}_t^{(\beta)}} \textrm{ in $\esC_0(\RR_+)$}.
\label{eq:gatewayLag}
 \end{eqnarray}
These relationships also hold in  $ \ell^2({\fm_{\beta}})$, with $\fm_{\beta}(n)\df \frac{(n+\beta-1)(n+\beta-2)\cdots \beta}{n!}, n\in \ZZ_+$ and on $\esL^2(\mu_{\beta})$, where $\mu_{\beta}(dx)\df \frac{x^{\beta-1}}{\Gamma(\beta)}dx, \: x>0$, respectively.
   \end{theorem}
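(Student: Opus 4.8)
The plan is to reduce the whole statement to a single algebraic identity at the level of the generators, to promote it to the semigroups by a uniqueness argument for a Cauchy problem, and finally to transfer it to the $L^2$ setting by a duality/density argument.

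First I would record the two elementary intertwinings at the level of the generators, namely
\[
\Gate{\Bg_{\beta}}{\Lambda}{\discret{G}_{\beta}}\qquad\text{and}\qquad\Gate{\discret{G}_{\beta}}{\adjL}{\Bg_{\beta}},
\]
on the space of polynomials. The key points are: (i) $\Bg_{\beta}x^{k}=k(k+\beta-1)x^{k-1}$, while $\discret{G}_{\beta}$ acts on the falling factorials $n^{\underline{k}}=n(n-1)\cdots(n-k+1)$ by $\discret{G}_{\beta}n^{\underline{k}}=k(k+\beta-1)n^{\underline{k-1}}$ and on the rising factorials $(n+\beta)^{\overline{k}}=(n+\beta)(n+\beta+1)\cdots(n+\beta+k-1)$ by $\discret{G}_{\beta}(n+\beta)^{\overline{k}}=k(k+\beta-1)(n+\beta)^{\overline{k-1}}$; (ii) $\Lambda$ carries the falling-factorial basis to the monomial basis, since $\Lambda(n^{\underline{k}})(x)=\E[\mathrm{Pois}(x)^{\underline{k}}]=x^{k}$ is the $k$-th factorial moment of a Poisson variable, and dually $\adjL(x^{k})(n)=\E[\mathrm{Gam}(n+\beta)^{k}]=(n+\beta)^{\overline{k}}$. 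Matching the common eigenvalue $k(k+\beta-1)$ then makes both generator intertwinings immediate on polynomials; equivalently one can check them on the exponential generating functions $z\mapsto\Lambda(z^{\pot})(x)=e^{x(z-1)}$ and $\lambda\mapsto\adjL(e^{-\lambda\pot})(n)=(1+\lambda)^{-(n+\beta)}$.

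Next I would lift this to the Feller semigroups on $\esc_{0}(\ZZ_{+})$ and $\esC_{0}(\RR_{+})$. One first checks that $\Lambda$ maps $\esc_{0}(\ZZ_{+})$ into $\esC_{0}(\RR_{+})$ and $\adjL$ maps $\esC_{0}(\RR_{+})$ into $\esc_{0}(\ZZ_{+})$ (both are Markov kernels, and $\mathrm{Pois}(x)\to\infty$, $\mathrm{Gam}(n+\beta)\to\infty$ in probability as the parameter grows), so they are contractions between these sup-norm spaces. Since $\discret{G}_{\beta}$ generates the conservative (non-explosive, the birth rates $n+\beta$ being only linear) Feller semigroup $\discret{Q}^{(\beta)}$, with finitely supported functions forming a core, and $\Bg_{\beta}$ — with the appropriate boundary behaviour at $0$, which is entrance for $\beta\geq1$ and requires a reflecting-type condition for $0\leq\beta<1$ — generates the Feller semigroup $\cont{Q}^{(\beta)}$, I would argue that for $g$ in that core both $t\mapsto\cont{Q}^{(\beta)}_{t}\Lambda g$ and $t\mapsto\Lambda\discret{Q}^{(\beta)}_{t}g$ solve the same Cauchy problem $\partial_{t}u=\Bg_{\beta}u$, $u(0)=\Lambda g$ in $\esC_{0}(\RR_{+})$ (differentiation passing through $\Lambda$ by the contraction bound and dominated convergence, and $\Lambda g\in D(\Bg_{\beta})$ since it is a smooth exponential-times-polynomial function with finite derivative at $0$, while $\Bg_{\beta}\Lambda g=\Lambda\discret{G}_{\beta}g$ by the step above). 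Well-posedness of this Cauchy problem forces the two to coincide; density of the core and continuity then give $\Gate{\cont{Q}^{(\beta)}_{t}}{\Lambda}{\discret{Q}^{(\beta)}_{t}}$ on all of $\esc_{0}(\ZZ_{+})$, and the symmetric argument gives $\Gate{\discret{Q}^{(\beta)}_{t}}{\adjL}{\cont{Q}^{(\beta)}_{t}}$ on $\esC_{0}(\RR_{+})$.

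For the $L^{2}$ statements I would first establish the mass-transport identities $\mu_{\beta}\Lambda=\fm_{\beta}$ and $\fm_{\beta}\adjL=\mu_{\beta}$ — short Fubini computations using $\int_{0}^{\infty}e^{-x}x^{n+\beta-1}\,dx=\Gamma(n+\beta)$ and $\sum_{n}y^{n}/n!=e^{y}$, recognising $\fm_{\beta}(n)=\Gamma(n+\beta)/(n!\,\Gamma(\beta))$ — and the companion identity $\langle\Lambda g,f\rangle_{\mu_{\beta}}=\langle g,\adjL f\rangle_{\fm_{\beta}}$, i.e. $\adjL=\Lambda^{*}$ with respect to these reference measures. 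The transport identities together with positivity show $\Lambda$ is a contraction from $L^{1}(\fm_{\beta})$ to $L^{1}(\mu_{\beta})$, trivially a contraction on $L^{\infty}$, hence by Riesz--Thorin a contraction $\ell^{2}(\fm_{\beta})\to\esL^{2}(\mu_{\beta})$, and symmetrically for $\adjL$. The $\esc_{0}/\esC_{0}$ intertwinings, which hold pointwise, then extend to $\ell^{2}(\fm_{\beta})$ and $\esL^{2}(\mu_{\beta})$ by density of finitely supported functions (resp. compactly supported simple functions), whose $\Lambda$- (resp. $\adjL$-) images lie in the target Hilbert space; alternatively, once one $L^{2}$ intertwining is known, the other follows by taking adjoints, using that $\fm_{\beta}$ (detailed balance: $\fm_{\beta}(n)(n+\beta)=\fm_{\beta}(n+1)(n+1)$) and $\mu_{\beta}$ (the speed measure of $\Bg_{\beta}$) are reversible, so $\discret{Q}^{(\beta)}_{t}$ and $\cont{Q}^{(\beta)}_{t}$ are self-adjoint, combined with $\adjL=\Lambda^{*}$.

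The main obstacle is the functional-analytic lifting, not the algebra: pinning down the correct self-adjoint/Feller realization of $\Bg_{\beta}$ at the boundary $0$ when $0\leq\beta<1$ (so that it is genuinely the limit of the birth-death chains and the Cauchy problem is well-posed), checking the domain and core conditions that let $\Lambda$ and $\adjL$ be composed with the generators, and dealing with the degenerate endpoint $\beta=0$, where $\fm_{0}$ and $\mu_{0}$ collapse and the $L^{2}$ statement should be read by continuity in $\beta$ (or treated directly for the absorbed processes).
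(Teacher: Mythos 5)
Your overall architecture --- an algebraic intertwining at the level of the generators, a lifting to the semi-groups, and an $L^2$ extension by contraction plus density (or by adjoints and reversibility) --- is sound and close in spirit to the paper's Hilbert-space proof, but the two key steps are implemented differently. The paper obtains the generator relation by conjugating $\Bg_{\beta}$ with the multiplication operator $\fe$ and reading off derivatives at $0$ (the map $\na$), with $\Lambda$ entering as the inverse of $\na$ between $\esFf(\ZZ_+)$ and $\esPe$; your factorial-moment calculus on falling and rising factorials is an equally valid way to reach the same identities, and your Riesz--Thorin argument for the $\esLd^2(\fm_{\beta})\to\esL^2(\mu_{\beta})$ contraction is equivalent to the paper's Cauchy--Schwarz-plus-mass-transport bound, while your adjoint/reversibility derivation of the second relation is exactly the paper's. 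For the Feller statement, however, the paper deliberately avoids generators altogether: it computes $\discret{Q}^{(\beta)}_t\mathfrak{p}_s$ and $\cont{Q}^{(\beta)}_t\fe_{s-1}$ in closed form and checks the intertwining on the geometric family $\mathfrak{p}_s$, $|s|<1$, whose linear span is dense in $\esc_0(\ZZ_+)$, so no boundary classification, core identification or Cauchy-problem well-posedness is ever needed; in $L^2$ it uses Friedrichs extensions, $\Vert\Lambda\Vert\le 1$, the inclusion $\Lambda(\esD(\dBg_{\beta}))\subset\esD(\Bg_{\beta})$, and the derivative trick on $s\mapsto\Bs^{(\beta)}_s\Lambda\dBs^{(\beta)}_{t-s}f$.

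The genuine gap is in your lifting step, and it is precisely the point you defer to ``domain and core conditions''. To claim that $t\mapsto\Lambda\discret{Q}^{(\beta)}_tg$ solves $\partial_t u=\Bg_{\beta}u$ you need $\Bg_{\beta}\Lambda h=\Lambda\discret{G}_{\beta}h$ for $h=\discret{Q}^{(\beta)}_tg$, which for $t>0$ is neither finitely supported nor a polynomial, whereas your verification lives on the span of falling factorials, which do not even belong to $\esc_0(\ZZ_+)$. To close this you must (i) verify the identity on a genuine core of the Feller generator of the birth-death chain (finitely supported functions, or the family $\mathfrak{p}_s$, as in your generating-function remark or the paper's $\esFf(\ZZ_+)$ computation), (ii) prove that this set is indeed a core for the linear-rate chain, and (iii) extend the identity to the full domain by graph-norm approximation, using that $\Lambda$ is a sup-norm contraction, that the Feller realization of $\Bg_{\beta}$ (reflecting at $0$ for $0<\beta<1$, absorbing for $\beta=0$) is closed, and that $\Lambda$ maps the core into $\esD(\Bg_{\beta})$ for that realization. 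This is exactly the role played in the paper by the lemmas on $\Lambda$, the Friedrichs extensions and the inclusion $\Lambda(\esD(\dBg_{\beta}))\subset\esD(\Bg_{\beta})$; without the analogous Feller-side statements your Cauchy-problem argument is a plan rather than a proof, and the same caveat applies symmetrically to the dual relation through $\adjL$ (your monomial check is likewise outside $\esC_0(\RR_+)$). Everything else --- the mass-transport identities, the $L^2$ extension by density, and the reading of the $\beta=0$ endpoint --- is fine.
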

   \par
   Each of the relationships in \eqref{eq:gatewayLag} is a Markov intertwining as it was introduced by Pitman and Rogers \cite{Pitman1981}. We call it a gateway as it relates  directly continuous and discrete Markov processes as an alternative of usual approximation procedures. Note that it can also be seen as a lattice quantization.  In fact, we shall show that $\Lambda : \ell^2({\fm_{\beta}}) \mapsto \esL^2(\mu_{\beta})$  is a quasi-affinity,
 i.e.~a  one-to-one, bounded with dense range linear operator.
  This combines with a  result of Douglas \cite{Douglas} yield that  the first intertwining identity in \eqref{eq:gatewayLag} can be lifted to a unitary equivalence between these semi-groups, implying in particular that both semi-groups are isospectral.

 As a by-product, this gateway enables to identify some new invariance properties for the birth-death chain
 that are inherited from the well-known symmetries of the squared Bessel semi-groups, see e.g.~\cite{Pitman1981b} and \cite{Going-Yor-99}.
 For instance, the following $\cont{d}$-self-similarity property, valid for any $\sigma,t>0$,
   \begin{equation}\label{eq:self-bessel}
     \Gate{\cont{Q}^{(\beta)}_{t}}{\cont{d}_\sigma}{\cont{Q}^{(\beta)}_{\sigma t}}
   \end{equation}
     has the following discrete analogue.
     For any $\sigma>0$,    define $\discret{D}_\sigma$ the signed kernel from $\ZZ_+$ to $\ZZ_+$ given by the following
     binomial formula
    \bq
    \fo n,m\in\ZZ_+,\qquad \discret{D}_\sigma(n,m)&\df&\binom{n}{m}\sigma^m(1-\sigma)^{n-m}.
     \eq
     This kernel is Markovian only for $\sigma\in[0,1]$. We also use the notation for any  bounded function $g$ on $\ZZ_+$ and $n\in \ZZ_+$, $
 \discret{D}_\sigma f(n)=\sum_{m=0}^n f(m)\discret{D}_\sigma (n,m)$.
     \begin{proposition} \label{prop:self} \label{prop:comut-Dd}
    For any $\sigma>0$, we have
    \begin{equation}\label{eq:dil-D}
    \Gate{\cont{d}_\sigma}{\Lambda}{\discret{D}_\sigma}
    \end{equation}
    and, for any $t>0$,
      \begin{equation}\label{eq:self-besseld}
     \Gate{\discret{Q}^{(\beta)}_{t}}{\discret{D}_\sigma}{\discret{Q}^{(\beta)}_{\sigma t}}.
   \end{equation}
 We say that $\discret{Q}^{(\beta)}$ is $\discret{D}$-self-similar.
    \end{proposition}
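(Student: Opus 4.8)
The plan is to prove \eqref{eq:dil-D} by a direct computation with the explicit kernels, and then to deduce \eqref{eq:self-besseld} from the already-established gateway identity of \refthm{thm:main} together with the $\cont{d}$-self-similarity \eqref{eq:self-bessel} of the squared Bessel semi-group. For \eqref{eq:dil-D}, I would unwind both sides applied to a bounded test function $g$ on $\ZZ_+$ at a point $x\geq 0$. The left-hand side is $\cont{d}_\sigma\Lambda g(x)=\Lambda g(\sigma x)=\E[g(\mathrm{Pois}(\sigma x))]$, while the right-hand side is $\Lambda(\discret{D}_\sigma g)(x)=\E[\discret{D}_\sigma g(\mathrm{Pois}(x))]=\E\bigl[\E[g(\mathrm{Bin}(N,\sigma))\mid N=\mathrm{Pois}(x)]\bigr]$. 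So the identity \eqref{eq:dil-D} is exactly the classical \emph{Poisson thinning} statement: if $N\sim\mathrm{Pois}(x)$ and, conditionally on $N$, $M\sim\mathrm{Bin}(N,\sigma)$, then $M\sim\mathrm{Pois}(\sigma x)$. This is a one-line generating-function check, $\sum_n e^{-x}\frac{x^n}{n!}(\sigma z+1-\sigma)^n=e^{-x}e^{x(\sigma z+1-\sigma)}=e^{-\sigma x}e^{\sigma x z}$, and it holds for all $\sigma>0$ (not merely $\sigma\in[0,1]$) since $\discret{D}_\sigma$ is understood as a \emph{signed} kernel and the binomial sum is finite.

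For \eqref{eq:self-besseld}, the idea is to transport the continuous self-similarity through the gateway. From \eqref{eq:dil-D} and \eqref{eq:self-bessel} we compute, composing intertwinings,
\[
\cont{Q}^{(\beta)}_{\sigma t}\cont{d}_\sigma=\cont{d}_\sigma\cont{Q}^{(\beta)}_{t},\qquad
\cont{Q}^{(\beta)}_{t}\,\Lambda=\Lambda\,\discret{Q}^{(\beta)}_{t},\qquad
\cont{d}_\sigma\,\Lambda=\Lambda\,\discret{D}_\sigma,
\]
hence
\[
\Lambda\,\discret{Q}^{(\beta)}_{\sigma t}\discret{D}_\sigma=\cont{Q}^{(\beta)}_{\sigma t}\Lambda\discret{D}_\sigma=\cont{Q}^{(\beta)}_{\sigma t}\cont{d}_\sigma\Lambda=\cont{d}_\sigma\cont{Q}^{(\beta)}_{t}\Lambda=\cont{d}_\sigma\Lambda\discret{Q}^{(\beta)}_{t}=\Lambda\,\discret{D}_\sigma\discret{Q}^{(\beta)}_{t}.
\]
Thus $\Lambda\bigl(\discret{Q}^{(\beta)}_{\sigma t}\discret{D}_\sigma-\discret{D}_\sigma\discret{Q}^{(\beta)}_{t}\bigr)=0$, and since $\Lambda:\ell^2(\fm_\beta)\to\esL^2(\mu_\beta)$ is one-to-one (it is a quasi-affinity, as asserted after \refthm{thm:main}), we get $\discret{Q}^{(\beta)}_{\sigma t}\discret{D}_\sigma=\discret{D}_\sigma\discret{Q}^{(\beta)}_{t}$, which is \eqref{eq:self-besseld}. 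One must check that all operators involved are bounded on $\ell^2(\fm_\beta)$ (or map into a space on which $\Lambda$ acts) so that the cancellation through $\Lambda$ is legitimate; the binomial kernel $\discret{D}_\sigma$ acts on polynomials and on $\ell^2(\fm_\beta)$, and $\discret{Q}^{(\beta)}_t$ is a Markov semi-group, so this is routine.

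The main obstacle is the injectivity argument in the last step: one needs $\Lambda$ to be injective on a space large enough to contain the image of $\discret{Q}^{(\beta)}_{\sigma t}\discret{D}_\sigma-\discret{D}_\sigma\discret{Q}^{(\beta)}_{t}$. An alternative, more elementary route that sidesteps this is to verify \eqref{eq:self-besseld} directly at the level of generators: differentiating in $t$, \eqref{eq:self-besseld} is equivalent to $\sigma\,\Gbeta\discret{D}_\sigma=\discret{D}_\sigma\Gbeta$ as operators on, say, polynomials, i.e. to the intertwining $\Gate{\sigma\discret{G}_\beta}{\discret{D}_\sigma}{\discret{G}_\beta}$; since $\discret{D}_\sigma$ preserves the (finite-dimensional, $\discret{G}_\beta$-stable) spaces of polynomials of each degree, this reduces to a finite linear-algebra identity that can be checked on the basis $n\mapsto \binom{n}{k}$ (on which $\discret{D}_\sigma$ acts diagonally by $\sigma^k$ and $\discret{G}_\beta$ is triangular), and then one integrates back using that both semi-groups are determined by their generator on polynomials. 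I would present the generator-level verification as the clean proof of \eqref{eq:self-besseld}, keeping the gateway-transport argument as the conceptual explanation of \emph{why} the discrete self-similarity must hold.
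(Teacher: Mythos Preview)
Your strategy is exactly the one the paper uses: prove \eqref{eq:dil-D} by the Poisson-thinning computation (the paper expands $e^{-\sigma x}\sum_m f(m)(\sigma x)^m/m!$ and regroups, which is your generating-function identity), and then chain \eqref{eq:dil-D}, the gateway \eqref{eq:gatewayLag}, and the continuous self-similarity \eqref{eq:self-bessel}, concluding via the injectivity of $\Lambda$.

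There is, however, a directional slip in your chain. The continuous self-similarity \eqref{eq:self-bessel} reads $\cont{Q}^{(\beta)}_{t}\cont{d}_\sigma=\cont{d}_\sigma\cont{Q}^{(\beta)}_{\sigma t}$, not $\cont{Q}^{(\beta)}_{\sigma t}\cont{d}_\sigma=\cont{d}_\sigma\cont{Q}^{(\beta)}_{t}$ as you wrote (check it on exponentials: $\cont{Q}^{(\beta)}_{\sigma t}\cont{d}_\sigma\fe_{-\lambda}(x)=(1+\sigma^2\lambda t)^{-\beta}\cdots$, while $\cont{d}_\sigma\cont{Q}^{(\beta)}_{t}\fe_{-\lambda}(x)=(1+\lambda t)^{-\beta}\cdots$). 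With the wrong direction your chain yields $\discret{Q}^{(\beta)}_{\sigma t}\discret{D}_\sigma=\discret{D}_\sigma\discret{Q}^{(\beta)}_{t}$, which is \emph{not} \eqref{eq:self-besseld} and is in fact false. Running the same chain with the correct orientation gives
\[
\Lambda\,\discret{Q}^{(\beta)}_{t}\discret{D}_\sigma
=\cont{Q}^{(\beta)}_{t}\cont{d}_\sigma\Lambda
=\cont{d}_\sigma\cont{Q}^{(\beta)}_{\sigma t}\Lambda
=\Lambda\,\discret{D}_\sigma\discret{Q}^{(\beta)}_{\sigma t},
\]
and then injectivity of $\Lambda$ gives \eqref{eq:self-besseld}. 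A minor point: the paper carries out this cancellation on $\esc_0(\ZZ_+)$ (where $\Lambda$ is a quasi-affinity by \reflemma{Linj}), which sidesteps checking that $\discret{D}_\sigma$ is bounded on $\ell^2(\fm_\beta)$ for $\sigma>1$.

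Your proposed alternative---verifying $\discret{G}_\beta\discret{D}_\sigma=\sigma\,\discret{D}_\sigma\discret{G}_\beta$ on polynomials and integrating---is a legitimate independent route that the paper does not take; it trades the injectivity-of-$\Lambda$ step for a finite-dimensional check on each polynomial degree, which is indeed clean.
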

    We point out that there is an interesting literature devoted to the study of the discrete self-similarity property,
    see e.g.~\cite{Klebanov} for a recent survey.
    For instance,  Steutel and van Harn \cite{Steutel-vanHarn} introduced  the binomial thinning operator
    to define  discrete stable variable. It boils
    down to the operator $\discret{D}_c$ when
     $0<c<1$.
    We proceed by recalling that Carmona et al.~\cite{Carmona-Petit-Yor-98}, showed the following interesting intertwining relationship between squared Bessel semi-groups of different indexes
\begin{eqnarray*}
 \label{eq:beta-gamma}
 \Gate{Q^{(\alpha+\beta)}_t}{ {{B}}_{\beta,\alpha}}{Q^{(\beta)}_t}
\end{eqnarray*}
where  \[ {{B}}_{\beta,\alpha}f(x)=\E\left[f(x {\rm{B}}({\beta,\alpha}))\right] =
\frac{\Gamma(\alpha+\beta)}{\Gamma(\alpha)\Gamma(\beta)}\int_{0}^{1}f(xr)r^{\beta-1}(1-r)^{\alpha-1}dr\]
 that is  ${\rm{B}}({\beta,\alpha})$ is a beta variable of parameter $\beta,\alpha>0$. Considering the intertwining identity at
time $t=1$
and $x=0$, they recover the following identity from the so-called beta-gamma algebra
\begin{eqnarray*}
 \label{eq:spectral_expansion_Q_t}
{\rm{B}}({\beta,\alpha})\times {\rm{Gam}}({\beta+\alpha})\stackrel{(d)}{=}{\rm{Gam}}({\beta})
\end{eqnarray*}
where here and below, in such a distributional identity, the random variables are assumed to be independent.
By considering a beta mixture of the gateway relationship \eqref{eq:dil-D},  i.e.~$c={\rm{B}}(\alpha,\beta)$, we obtain the following discrete
analogue of Carmona et al.~\cite{Carmona-Petit-Yor-98} analysis.
\begin{proposition} \label{prop:beta-gam-d}
For any $\alpha,\beta,t>0$, we have  on $\esc_0(\ZZ_+)$
\begin{equation}\label{eq:int-dbeta-gamma}
\Gate{\discret{Q}^{(\alpha+\beta)}_t}{\discret{B}_{\beta,\alpha}}{\discret Q^{(\beta)}_t}
\end{equation}
and, in particular,  we have  the discrete analogue of the beta-gamma algebra
\begin{eqnarray*}
 \label{eq:spectral_expansion_Q_t}
 {\rm{B}}({\beta,\alpha}) \odot{\rm{Pois}}({\rm{Gam}}(\alpha+\beta))\stackrel{(d)}{=}{\rm{Pois}}({\rm{Gam}}(\beta))
\end{eqnarray*}
  where the variables are all considered independent and the  binomial thinning operation is defined by
$\alpha \odot X = \sum_{i=1}^{X}b_i(\alpha)$ where $X$ is a $\mathbb{Z}_+$-valued variable,
$\alpha \in (0, 1)$, and $(b_i)$ is a sequence of independent
identically distributed (iid) Bernoulli  variables of parameter $\alpha$ and independent of $X$.
\end{proposition}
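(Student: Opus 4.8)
The plan is to transport the intertwining \eqref{eq:int-dbeta-gamma} across the gateway of \refthm{thm:main}, feeding in the continuous intertwining $\Gate{\cont{Q}^{(\alpha+\beta)}_t}{B_{\beta,\alpha}}{\cont{Q}^{(\beta)}_t}$ of Carmona et al.~\cite{Carmona-Petit-Yor-98} as the only external input, and then to read off the distributional identity by specialising at the origin.

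First I would make precise the beta mixture announced before the statement: set
\[
\discret{B}_{\beta,\alpha}=\int_0^1 \discret{D}_r\,\mathbb{P}({\rm B}(\beta,\alpha)\in dr),\qquad\text{equivalently}\qquad \discret{B}_{\beta,\alpha}g(n)=\E\big[g({\rm B}(\beta,\alpha)\odot n)\big].
\]
Since ${\rm B}(\beta,\alpha)\in(0,1)$ almost surely, each $\discret{D}_r$ appearing is a genuine Markov kernel, so $\discret{B}_{\beta,\alpha}$ is Markovian, and as ${\rm B}(\beta,\alpha)\odot n\to\infty$ in probability when $n\to\infty$ it maps $\esc_0(\ZZ_+)$ into itself. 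Integrating the relation $\Gate{\cont{d}_r}{\Lambda}{\discret{D}_r}$ of \refprop{prop:comut-Dd} against the law of ${\rm B}(\beta,\alpha)$ — the interchange of the expectation over ${\rm Pois}(x)$ with the integral over $r$ being harmless since $g$ is bounded and both are probability averages — yields the commutation
\[
B_{\beta,\alpha}\,\Lambda=\Lambda\,\discret{B}_{\beta,\alpha}\qquad\text{on bounded functions on }\ZZ_+,
\]
because $\cont{d}_r\Lambda g(x)=\Lambda g(rx)$ and averaging the left-hand argument over $r\sim{\rm B}(\beta,\alpha)$ reproduces exactly $B_{\beta,\alpha}(\Lambda g)(x)$.

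Next I would chain the identities. For $g\in\esc_0(\ZZ_+)$ the functions $\discret{B}_{\beta,\alpha}g$ and $\discret{Q}^{(\beta)}_t g$ again lie in $\esc_0(\ZZ_+)$, and applying in turn the first intertwining of \refthm{thm:main} at index $\alpha+\beta$, the commutation just obtained, the intertwining of \cite{Carmona-Petit-Yor-98}, \refthm{thm:main} at index $\beta$, and the commutation once more,
\begin{align*}
\Lambda\,\discret{Q}^{(\alpha+\beta)}_t\discret{B}_{\beta,\alpha}g
&=\cont{Q}^{(\alpha+\beta)}_t\,\Lambda\,\discret{B}_{\beta,\alpha}g
=\cont{Q}^{(\alpha+\beta)}_t\,B_{\beta,\alpha}\,\Lambda g\\
&=B_{\beta,\alpha}\,\cont{Q}^{(\beta)}_t\,\Lambda g
=B_{\beta,\alpha}\,\Lambda\,\discret{Q}^{(\beta)}_t g
=\Lambda\,\discret{B}_{\beta,\alpha}\discret{Q}^{(\beta)}_t g .
\end{align*}
Thus $\Lambda$ annihilates the bounded function $\discret{Q}^{(\alpha+\beta)}_t\discret{B}_{\beta,\alpha}g-\discret{B}_{\beta,\alpha}\discret{Q}^{(\beta)}_t g$. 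Since $\Lambda h(x)=\sum_{n\geq 0}h(n)e^{-x}x^n/n!$ is, for bounded $h$, an everywhere convergent power series in $x$, $\Lambda h\equiv 0$ forces $h\equiv 0$; this injectivity of $\Lambda$ on bounded functions — a weak form of the quasi-affinity property recorded after \refthm{thm:main} — is all that is needed, and \eqref{eq:int-dbeta-gamma} follows on $\esc_0(\ZZ_+)$.

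Finally, for the discrete beta–gamma identity I would evaluate \eqref{eq:int-dbeta-gamma} at $n=0$. Reading $\Gate{\cont{Q}^{(\beta)}_t}{\Lambda}{\discret{Q}^{(\beta)}_t}$ at $x=0$ and using that the diffusion with generator $x\pa^2+\beta\pa$ started at $0$ satisfies $\cont{X}^{(\beta)}_t\stackrel{(d)}{=}t\,{\rm Gam}(\beta)$ (this is precisely the value $\cont{Q}^{(\beta)}_t f(0)=\E[f(t\,{\rm Gam}(\beta))]$ entering the proof of \refthm{thm:main}), one gets $\discret{Q}^{(\beta)}_t g(0)=\E[g({\rm Pois}(t\,{\rm Gam}(\beta)))]$ for all bounded $g$, because $\cont{Q}^{(\beta)}_t\Lambda g(0)=\E[\Lambda g(t\,{\rm Gam}(\beta))]$ while $\Lambda\discret{Q}^{(\beta)}_t g(0)=\discret{Q}^{(\beta)}_t g(0)$ since ${\rm Pois}(0)=0$. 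Specialising the two sides of \eqref{eq:int-dbeta-gamma} at $n=0$ — where ${\rm B}(\beta,\alpha)\odot 0=0$ — then gives $\E[g({\rm B}(\beta,\alpha)\odot{\rm Pois}(t\,{\rm Gam}(\alpha+\beta)))]=\E[g({\rm Pois}(t\,{\rm Gam}(\beta)))]$ for every $g\in\esc_0(\ZZ_+)$, i.e.~${\rm B}(\beta,\alpha)\odot{\rm Pois}(t\,{\rm Gam}(\alpha+\beta))\stackrel{(d)}{=}{\rm Pois}(t\,{\rm Gam}(\beta))$; taking $t=1$ is the asserted identity (the parameter $t$ being recoverable by dilation anyway). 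The only genuinely delicate points are the Fubini bookkeeping behind $B_{\beta,\alpha}\Lambda=\Lambda\discret{B}_{\beta,\alpha}$ and checking at each stage of the chain that every operator is applied to a bounded function, so that the intertwinings of \refthm{thm:main} and of \cite{Carmona-Petit-Yor-98} are in force; the algebra itself is a diagram chase.
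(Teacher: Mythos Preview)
Your argument is correct and follows essentially the same route as the paper: you establish $B_{\beta,\alpha}\Lambda=\Lambda\discret{B}_{\beta,\alpha}$ by integrating the relation $\cont{d}_r\Lambda=\Lambda\discret{D}_r$ against the law of ${\rm B}(\beta,\alpha)$, then chain this with Carmona et al.\ and the gateway of \refthm{thm:main} before invoking injectivity of $\Lambda$, and finally read off the distributional identity at $n=0$. Your presentation is in fact a bit more careful than the paper's in checking that each operator preserves $\esc_0(\ZZ_+)$ and in deriving $\discret{Q}^{(\beta)}_t g(0)=\E[g({\rm Pois}(t\,{\rm Gam}(\beta)))]$ explicitly from the gateway.
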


  In Section \ref{sec:cons-bes}, we shall provide additional by-products of the gateway identity,
  in relation to  the spectral decomposition of these semi-groups  we will offer an original proof of
  the construction of  the Laguerre polynomials as the Jensen polynomials
   of the Bessel functions.
   Moreover, it also provides an exact simulation of squared Bessel processes.

 We now proceed by recalling that the $\cont{d}$-self-similarity of the squared Bessel semi-group entails
 that the family of linear operators $\cont{K}^{(\beta)}\df(\cont{K}^{(\beta)}_t)_{t\geq 0}$ defined,
 for any $t,x\geq 0$, by
 \begin{equation}\label{eq:def_Lag}
   \cont{K}^{(\beta)}_t f(x) = \cont{Q}^{(\beta)}_{e^t-1} \cont{d}_{e^{-t}}f(x)
 \end{equation}
 is a Feller semi-group on $[0,\infty)$. It is thus natural to  wonder whether  the family of linear operators $\discret{K}^{(\beta)}=(\discret{K}^{(\beta)}_t)_{t\geq 0}$ defined, for any $t\geq 0$, by
 \begin{equation}\label{eq:def_Lag}
   \discret{K}^{(\beta)}_t g(n) = \discret{Q}^{(\beta)}_{e^t-1} \discret{D}_{e^{-t}}g(n),
 \end{equation}
 is a discrete Markov semi-group. We have the following.
 \begin{theorem}\label{thm:mainK}
For any $\beta\geq 0$, $\discret{K}^{(\beta)}$ is the Feller semi-group on $\N$ of a birth-death chain. Moreover,  we have on $\esc_0(\ZZ_+)$
\begin{eqnarray}\label{eq:gateway-lag}
\Gate{\cont{K}_t^{(\beta)}}{\Lambda}{\discret{K}_t^{(\beta)}}. 
 \end{eqnarray}
\end{theorem}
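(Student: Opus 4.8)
The plan is to deduce \eqref{eq:gateway-lag} by composing the intertwinings already available, and to get all the semigroup structure of $\discret{K}^{(\beta)}$ from the $\discret{D}$-self-similarity of \refprop{prop:self} together with the remark that $(\discret{D}_{e^{-t}})_{t\ge0}$ is \emph{itself} a Feller semigroup, namely that of the linear pure death chain.

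First I would check the intertwining. For $g\in\esc_0(\ZZ_+)$ and $t\ge0$ the kernel $\discret{D}_{e^{-t}}$ is Markovian and maps $\esc_0(\ZZ_+)$ into itself, because $\discret{D}_\sigma g(n)=\E[g(\mathrm{Bin}(n,\sigma))]$ and $\mathrm{Bin}(n,\sigma)\to\infty$ in probability as $n\to\infty$; applying the first intertwining of \refthm{thm:main} to $\discret{D}_{e^{-t}}g$ and then \eqref{eq:dil-D} gives
\[
\Lambda\,\discret{K}^{(\beta)}_t g=\Lambda\,\discret{Q}^{(\beta)}_{e^t-1}\discret{D}_{e^{-t}}g=\cont{Q}^{(\beta)}_{e^t-1}\Lambda\,\discret{D}_{e^{-t}}g=\cont{Q}^{(\beta)}_{e^t-1}\cont{d}_{e^{-t}}\Lambda g=\cont{K}^{(\beta)}_t\Lambda g ,
\]
which is exactly \eqref{eq:gateway-lag}.

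Next I would prove that $\discret{K}^{(\beta)}$ is a Feller semigroup. Using the composition rule $\discret{D}_\sigma\discret{D}_{\sigma'}=\discret{D}_{\sigma\sigma'}$ for binomial thinning together with \eqref{eq:self-besseld} rewritten as $\discret{D}_\sigma\discret{Q}^{(\beta)}_u=\discret{Q}^{(\beta)}_{u/\sigma}\discret{D}_\sigma$, I compute, for $s,t\ge0$,
\[
\discret{K}^{(\beta)}_s\discret{K}^{(\beta)}_t=\discret{Q}^{(\beta)}_{e^s-1}\bigl(\discret{D}_{e^{-s}}\discret{Q}^{(\beta)}_{e^t-1}\bigr)\discret{D}_{e^{-t}}=\discret{Q}^{(\beta)}_{e^s-1}\discret{Q}^{(\beta)}_{e^s(e^t-1)}\discret{D}_{e^{-s}}\discret{D}_{e^{-t}}=\discret{Q}^{(\beta)}_{e^{s+t}-1}\discret{D}_{e^{-(s+t)}}=\discret{K}^{(\beta)}_{s+t},
\]
and $\discret{K}^{(\beta)}_0=\Id$; each $\discret{K}^{(\beta)}_t$ is a composition of Markov kernels, hence a conservative Markov kernel. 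For the Feller property I would use that the linear death chain with rate $n$ at state $n$ has linear rates, hence is non-explosive, and that its honest Feller semigroup is precisely $(\discret{D}_{e^{-t}})_{t\ge0}$ (indeed $\discret{D}_{e^{-t}}g(n)=\E_n[g(X_t)]$ with $X_t\sim\mathrm{Bin}(n,e^{-t})$); then for $g\in\esc_0(\ZZ_+)$ one has $\discret{K}^{(\beta)}_t g\in\esc_0(\ZZ_+)$ and, both factors being Feller, $\|\discret{K}^{(\beta)}_tg-g\|_\infty\le\|\discret{D}_{e^{-t}}g-g\|_\infty+\|\discret{Q}^{(\beta)}_{e^t-1}g-g\|_\infty\to0$ as $t\to0$.

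Finally I would identify the chain. A direct differentiation at $t=0$ gives $\frac{d}{dt}\discret{D}_{e^{-t}}g(n)\big|_{t=0}=n\bigl(g(n-1)-g(n)\bigr)=n\pa_-g(n)$ for finitely supported $g$; differentiating $\discret{K}^{(\beta)}_tg=\discret{Q}^{(\beta)}_{e^t-1}(\discret{D}_{e^{-t}}g)$ at $t=0$ (with $\frac{d}{dt}(e^t-1)|_{t=0}=1$ and the generator of $\discret{Q}^{(\beta)}$ equal to $\discret{G}_\beta=(n+\beta)\pa_++n\pa_-$) then yields the generator
\[
(n+\beta)\pa_++2n\pa_- ,
\]
which is tridiagonal: it is the generator of the linear birth-death chain with birth rate $n+\beta$ and death rate $2n$ at state $n$. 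Because these rates are linear this chain is non-explosive, so the closure of the above operator on finitely supported functions generates a unique Feller semigroup, which must coincide with $\discret{K}^{(\beta)}$ since the latter is a Feller semigroup whose generator extends it on that core; this proves the theorem. (As a sanity check, this chain has invariant probability proportional to $2^{-n}\fm_\beta(n)$, which is the law of $\mathrm{Pois}(\mathrm{Gam}(\beta))$, consistently with \eqref{eq:gateway-lag} and the invariant law $\mathrm{Gam}(\beta)$ of $\cont{K}^{(\beta)}$.) The main obstacle is the Feller property of $\discret{K}^{(\beta)}$ — sup-norm strong continuity at $0$ and preservation of $\esc_0(\ZZ_+)$ — which I reduce to the elementary corresponding facts for the death semigroup $(\discret{D}_{e^{-t}})$; everything else is a formal manipulation of the established intertwinings plus a routine non-explosion/uniqueness argument.
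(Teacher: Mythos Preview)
Your proof is correct and follows essentially the same route as the paper: the intertwining \eqref{eq:gateway-lag} is obtained exactly as in Proposition~\ref{laginter} by composing \refthm{thm:main} with \eqref{eq:dil-D}, the semigroup law comes from the discrete scaling \eqref{eq:self-besseld} (the paper phrases this via the affine semigroup $(\wi\cS,\ltimes)$ in Subsection~\ref{dLp}), and the generator you find agrees with \eqref{dLg} at $\sigma=1$. The one noteworthy difference is the strong continuity at $t=0$: the paper establishes it in $\esLd^2(\fm_\beta)$ through the lengthy computation of Lemma~\ref{appl2} in Appendix~\ref{otdco}, whereas you stay in $\esc_0(\ZZ_+)$ and reduce it to the elementary Feller property of the pure-death semigroup $(\discret{D}_{e^{-t}})_{t\ge0}$ via the contraction bound $\|\discret{K}^{(\beta)}_t g-g\|_\infty\le\|\discret{D}_{e^{-t}}g-g\|_\infty+\|\discret{Q}^{(\beta)}_{e^t-1}g-g\|_\infty$, which is a cleaner shortcut for the Feller statement actually claimed in the theorem.
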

\par
It turns out that the diffusive semi-group $(\Ls^{(\beta)})_{t\geq 0}$ is ergodic and its invariant (even reversible) probability measure $\nu_\beta$ is the gamma distribution
of shape parameter $\beta>0$:
\bq \fo x>0,\qquad\nu_\beta(dx)&\df&x^{\beta-1}e^{-x}\, \frac{dx}{\Gamma(\beta)}.\eq
The birth-death
semi-group $(\dLs^{(\beta)})_{t\geq 0}$ is equally ergodic and its invariant (even reversible) probability measure $\fn_\beta$ is the negative Bernoulli distribution
of parameters 1/2 and $\beta>0$:
\bq
\fo n\in\ZZ_+,\qquad \fn_\beta(n)&\df& 2^{-n-\beta}\frac{\Gamma(\beta+n)}{n!\Gamma(\beta)}.\eq
It follows that \eqref{eq:gateway-lag} can be interpreted into the $\esL^2$ sense, namely from $\esL^2(\nu_\beta)$ to $\ell^2(\fn_\beta)$.
By means of the gateway relationship, we will also present what could be seen as an isospectral approximation of the  Laguerre diffusions by birth-death Laguerre processes.
The intertwinings of Theorems \ref{thm:main} and \ref{thm:mainK} can be strengthened into a graph of intertwining relations,
which will be investigated  in a general Markovian framework, as well as  its applications on speeds of convergence to  equilibrium,
in a forthcoming paper \cite{intertwining}. However as an avant-go\^ut,  we state  at the end of Subsection
 \ref{iocadLp}
 some accurate estimates on the convergence in the entropy sense of the semi-group $\dLs^{(\beta)}$
toward its equilibrium $\fn_\beta$, for $\beta\geq 1/2$, deduced from a corresponding result for $\Ls^{(\beta)}$.
\par\me
The plan of the paper is as follows. The next section is devoted to the proof of Theorem \ref{thm:main} that is to  the main gateway relationships.
In Section \ref{sec:cons-bes}, we state and proof some by-products of these relationships which include the proof of Propositions \ref{prop:self} and \ref{prop:beta-gam-d}, the relationship between Laguerre polynomials and Bessel functions. It also contains the characterization of the product of the intertwining kernel with its adjoint as the squared Bessel semi-group itself considered at time $1$. This interesting observation is then used to provide an exact simulation  of the squared Bessel processes. Section \ref{cabadLp} focusses on the study of the continuous and discrete ergodic Laguerre semi-groups. It contains the proof of Theorem \ref{thm:mainK}, the spectral decomposition of the discrete Laguerre semi-group. The appendix contains the study of the discrete scaling operator as a contractive semi-group in the Hilbert space.


\section{Gateway between continuous and discrete Bessel  processes}\label{cabadBp}


The aim  of this Section is to prove  Theorem \ref{thm:main}. 
We shall in fact provide two different proofs which all rely on specific properties of the involved processes. We find worth detailing each of them as they  may be used in a different context.
The first one hinges on a gateway relationship  between the generators of the Bessel and linear birth-death processes for which the linearity of their coefficient plays an important role. The second one, which offers an alternative proof in the Banach space $\esc_0(\ZZ_+)$ setting,  is based on a connection that we establish between the Laplace transform of the two semi-groups which seems to find its root in the branching property of the two processes.
Let now  $\mu_{\beta}, \beta>0,$ be the measure on $(0,\iy)$ given by
\[\mu_{\beta}(dx)\df \frac{x^{\beta-1}}{\Gamma(\beta)}dx, \: x>0,\] where $\Gamma$ is the gamma function. Moreover, let $\fm_{\beta}$ be the measure on $\ZZ_+$ given by
\bqn{mbe}
\fm_{\beta}(n)&\df&\binom{n+\beta-1}{n}\df \frac{(n+\beta-1)(n+\beta-2)\cdots \beta}{n!}, \: n\in\ZZ_+,\eqn
which is an extension of the usual binomial coefficient defined for $\beta\in\NN$.


The first proof  is split into several intermediate results that we state below and postpone their proofs to the forthcoming subsections. To describe the strategy of the proof we need to introduce a few notation. For $\beta\in\RR$, consider the $(2\beta)$-squared Bessel diffusion generator on $\RR_+$ given by
\bq
\fo x\in\RR_+, \qquad \Bg_{\beta}&=&x\pa^2+\beta\pa.\eq

Next, define the tri-diagonal operator $\dBg_{\beta,\alpha}$ acting on
$\esF(\ZZ_+)=\RR^{\ZZ_+}$, the set of all real mappings defined on $\RR_+$, via, for $ f\in\esF(\ZZ_+),$ and $n\in \ZZ_+$,
\bq
\dBg_{\beta,\alpha} f(n)&\df &(n+\beta)f(n+1)-(2\alpha n+\beta\alpha )f(n)+\alpha^2nf(n-1)\eq
(for $n=0$, it is not necessary to define $f(-1)$, since it is multiplied by 0).\par
For $\alpha\in\RR$, introduce the mapping
\bq
\fe_{\alpha}\st \RR_+\ni x\mapsto e^{\alpha x}\in\RR_+\eq
and consider the operator $\na_{\alpha}$ defined by
\bqn{nalph}
\na_{\alpha}\st \cC^\iy(\RR_+)\ni f&\mapsto& \lt((\pa^n\fe_{\alpha})f(0)\rt)_{n\in\ZZ_+}\in \esF(\ZZ_+).\eqn
 We are ready to state our first result which relies on  formal computations on the linear operators $\Bg_{\beta}$ and $\dBg_{\beta,\alpha}$ where the domain of the generators  does not play an important role, for instance
we can let $\Bg_{\beta}$ act on $\cC^\iy(\RR_+)$, the space of infinitely continuously differentiable functions on $\R^+$.\par
\begin{lem} \label{lem:gate_gen_B}
  We have on $\cC^\iy(\RR_+)$
  \bqn{gnab}
\Gate{\dBg_{\beta,\alpha}}{\na_{\alpha}}{\Bg_{\beta}}.\eqn
 The operator $\dBg_{\beta,\alpha}$ is a Markov generator if and only if $\alpha=1$ and $\beta \geq 0$. We write simply $\fe\df\fe_1$, $\dBg_{\beta}\df\dBg_{\beta,1}$ and $\na\df\na_{1}$.
\end{lem}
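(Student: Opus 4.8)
The plan is to establish the intertwining \eqref{gnab} by a direct computation with the Leibniz rule, and then to read off the Markov-generator condition from the signs of the coefficients of $\dBg_{\beta,\alpha}$.

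For the intertwining, I would fix $f\in\cC^\iy(\RR_+)$ and set $g\df\fe_\alpha f$, so that $f=\fe_{-\alpha}g$ and, by the definition \eqref{nalph}, $\na_\alpha f=(g^{(n)}(0))_{n\in\ZZ_+}$. Differentiating $f=e^{-\alpha x}g$ twice and substituting into $\Bg_\beta f=x f''+\beta f'$, one gets after multiplication by $\fe_\alpha$
\bq
\fe_\alpha\cdot(\Bg_\beta f)&=&x g''+(\beta-2\alpha x)g'+(\alpha^2 x-\alpha\beta)g,\eq
so that $\na_\alpha(\Bg_\beta f)(n)$ is the $n$-th derivative at $0$ of this right-hand side. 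I would evaluate it termwise with the elementary identity $\pa^n(x h)(0)=n\,h^{(n-1)}(0)$, which holds for $n\geq1$ and equals $0$ when $n=0$ — precisely why the value $f(-1)$ never appears — obtaining
\bq
\na_\alpha(\Bg_\beta f)(n)&=&(n+\beta)g^{(n+1)}(0)-(2\alpha n+\alpha\beta)g^{(n)}(0)+\alpha^2 n\,g^{(n-1)}(0),\eq
which is exactly $(\dBg_{\beta,\alpha}\na_\alpha f)(n)$; this proves \eqref{gnab}.

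For the second assertion, I would observe that $\dBg_{\beta,\alpha}$ is carried by nearest-neighbour transitions, with off-diagonal coefficients $n+\beta$ (for $n\to n+1$) and $\alpha^2 n$ (for $n\to n-1$), and with row sums $(n+\beta)-(2\alpha n+\alpha\beta)+\alpha^2 n=n(1-\alpha)^2+\beta(1-\alpha)$. Nonnegativity of all off-diagonal coefficients is equivalent to $\beta\geq0$ (those of the form $\alpha^2 n$ being automatically nonnegative), while nonpositivity of all row sums forces, on letting $n\to\iy$, the identity $(1-\alpha)^2=0$, i.e.~$\alpha=1$ (the $n=0$ row sum then being $\beta(1-\alpha)=0$). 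Conversely, for $\alpha=1$ and $\beta\geq0$ the operator $\dBg_{\beta,1}$ coincides with the difference operator $\dBg_\beta=(n+\beta)\pa_++n\pa_-$ of the Introduction: its off-diagonal coefficients are nonnegative and its row sums vanish, and since the birth rates $n+\beta$ grow only linearly, so that $\sum_n(n+\beta)^{-1}=\iy$, the associated minimal birth-death chain does not explode and $\dBg_\beta$ is a genuine conservative Markov generator.

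Both steps are essentially computational and I do not expect a real obstacle; the two points that call for a little care are the boundary case $n=0$ in the Leibniz identity (which is exactly why $f(-1)$ is never invoked) and, in the converse direction, the non-explosion of the birth-death chain, ensured by the linear growth of its birth rates.
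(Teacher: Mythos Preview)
Your proof is correct and follows essentially the same route as the paper: both compute $\fe_\alpha\cdot(\Bg_\beta f)$ as a second-order expression in $g=\fe_\alpha f$ and then read off the $n$-th derivative at $0$ via the Leibniz identity $\pa^n(xh)(0)=n\,h^{(n-1)}(0)$, and both identify the Markov-generator condition from the row sums $n(1-\alpha)^2+\beta(1-\alpha)$. The only notable difference is that you additionally verify non-explosion of the birth-death chain via $\sum_n(n+\beta)^{-1}=\iy$, a point the paper leaves implicit.
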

We would like to replace $\na$ by a Markov kernel from $\ZZ_+$ to $\RR_+$.
Let us first describe heuristically the procedure we will follow.
We start by finding an operator $\Lambda$ from $\RR_+$ to $\ZZ_+$ which is in some sense an inverse of $\na$.
Multiplying both side of \eqref{gnab} by $\Lambda$, on the left and on the right, we get
\bqn{b2}
\Gate{\Lambda\na \Bg_{\beta}}{\Lambda}{\dBg_{\beta}\na\Lambda}\eqn
namely the new intertwining relation
\bqn{b3}
\Gate{\Bg_{\beta}}{\Lambda}{\dBg_{\beta}}.\eqn
Since $\na$ corresponds to differentiations, $\Lambda$ is obtained through integrations and more precisely it will turn out to be
a Markov kernel from $\RR_+$ to $\ZZ_+$. To  develop this program in a more rigorous way, we introduce furthe notation. First, let us denote by $\Lambda$  the Markov kernel defined, for a bounded function $g$ on $\ZZ_+$, by
\bqn{Lambda}
\Lambda g(x)&=&\EE[g({\rm{Pois}}(x))]=\frac{1}{\fe(x)}\sum_{n\geq 0} \frac{g(n)}{n!} x^n, \quad x\geq 0,
\eqn
where we recognize  ${\rm{Pois}}(x)$ as a Poisson variable of parameter $x$. Next,
consider $\esPe$ the vector space of functions on $\RR_+$ which can be written under the form $P/\fe$, where $P$ is a polynomial function
and $\esFf(\ZZ_+)$ is the subspace of functions from $\esF(\ZZ_+)$ which vanish except on a finite number of points from $\ZZ_+$. Finally, we say that a linear operator between two Banach spaces is a quasi-affinity if it is bounded, one-to-one with a dense range. We are ready to state the following which also contains  some results on the operator $\Lambda$ that will be used later.

\begin{lem}\label{lem:lambda_def}\label{lem:lambda_bounded}\label{Linj}
\begin{enumerate}[1)]
  \item $\Lambda : \esPe \mapsto \esFf(\ZZ_+)$  is bijective with inverse $\na$.
  \item Moreover, the Markov kernel $\Lambda$ transports the measure $\mu_{\beta}$ into $\fm_{\beta}$ and it can be extended into a quasi-affinity, still denoted by $\Lambda$, from $\esLd^2(\fm_{\beta})$ to $\esL^2(\mu_{\beta})$
with an operator norm  bounded by $1$.
  \item Similarly, $\Lambda$ transports any probability measure $\nu$ on $\RR_+$ into a probability measure $\mathfrak{n}$ on $\ZZ_+$ and, as above, it can extended to bounded operator with dense range from $\esLd^2(\mathfrak{n})$ to $\esL^2(\nu)$. It is a quasi-affinity  if  $\mathfrak{n}(n)\sim C e^{-2 n \mathfrak{g}(n)}$, $C>0$ and $\mathfrak{g(n)}=o(\ln n)$.
       \item Finally, $\Lambda : \esc_0(\ZZ_+) \mapsto \Cp$ is a quasi-affinity.
\end{enumerate}
\end{lem}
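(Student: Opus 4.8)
The plan is to prove the four assertions of Lemma~\ref{lem:lambda_def} essentially in the order they are stated, since each builds on the previous one.

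\textbf{Part 1.} First I would verify that $\Lambda$ maps $\esPe$ into $\esFf(\ZZ_+)$ and that $\na$ is a two-sided inverse. If $h = P/\fe \in \esPe$ with $P(x) = \sum_{k=0}^N a_k x^k$, then from the series representation \eqref{Lambda} the pre-image candidate is the function $n \mapsto$ coefficient data, but it is cleaner to argue directly: $\na h = ((\pa^n \fe) h(0))_{n} = ((\pa^n (\fe h))(0) \cdot (\text{Leibniz})\ldots)$; the point is that $\fe h = P$ is a polynomial of degree $N$, so $\na(P/\fe)(n) = (\pa^n P)(0) = n!\, a_n$, which vanishes for $n > N$ and hence lies in $\esFf(\ZZ_+)$. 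Conversely, given $g \in \esFf(\ZZ_+)$ supported on $\{0,\dots,N\}$, the function $\Lambda g = \frac{1}{\fe}\sum_{n \le N} \frac{g(n)}{n!} x^n$ is manifestly in $\esPe$, and composing shows $\na \Lambda g(n) = \pa^n\big(\sum_{m} \tfrac{g(m)}{m!} x^m\big)(0) = g(n)$. So $\Lambda$ and $\na$ are mutually inverse bijections between $\esPe$ and $\esFf(\ZZ_+)$. This is a routine power-series bookkeeping computation.

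\textbf{Part 2.} The transport identity $\Lambda^* \mu_\beta = \fm_\beta$ (in the sense $\int \Lambda g\, d\mu_\beta = \sum_n g(n)\,\fm_\beta(n)$) reduces, by linearity and density, to checking it on $g = \un_{\{n\}}$: one must show $\int_0^\infty e^{-x}\frac{x^n}{n!}\,\frac{x^{\beta-1}}{\Gamma(\beta)}\,dx = \binom{n+\beta-1}{n}$, which is the Gamma integral $\Gamma(n+\beta)/(n!\,\Gamma(\beta))$ and equals $\fm_\beta(n)$. For the norm bound: by Jensen's inequality (conditional on the Poisson variable being a deterministic randomization), for $g \in \ell^2(\fm_\beta)$,
\[
\int_0^\infty (\Lambda g)^2(x)\, \mu_\beta(dx) \le \int_0^\infty \Lambda(g^2)(x)\, \mu_\beta(dx) = \sum_n g^2(n)\,\fm_\beta(n),
\]
so $\|\Lambda\| \le 1$ on the $\esL^2$ scale; this also shows $\Lambda$ extends from the dense subspace $\esFf(\ZZ_+)$ to all of $\ell^2(\fm_\beta)$. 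Injectivity and dense range are the substantive points. For dense range, note that the image contains $\Lambda(\esFf(\ZZ_+)) = \esPe$, and functions of the form $x^k e^{-x}$ span a dense subspace of $\esL^2(\mu_\beta)$ — this can be seen from the completeness of Laguerre-type polynomials in the weighted space, or from the fact that the Laplace/Fourier transform of $x^{\beta-1}(\text{polynomial})e^{-x}$ determines the measure. Injectivity: if $\Lambda g = 0$ $\mu_\beta$-a.e.\ with $g \in \ell^2(\fm_\beta)$, then the analytic function $x \mapsto \sum_n \frac{g(n)}{n!} x^n$ (entire, by the $\ell^2(\fm_\beta)$ growth control on $g$) vanishes on a set of positive Lebesgue measure, hence identically, forcing $g \equiv 0$.

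\textbf{Part 3 and 4.} Part 3 is the same argument with $\mu_\beta$ replaced by a general probability $\nu$: transport of measures is the definition of $\mathfrak n = \Lambda^*\nu$, the Jensen bound gives $\|\Lambda\|\le 1$ hence a bounded extension, and dense range follows as before since $\esPe \cap \esL^2(\nu)$ is dense whenever the relevant moment problem is determinate — the stated tail condition $\mathfrak n(n) \sim C e^{-2n\mathfrak g(n)}$ with $\mathfrak g(n) = o(\ln n)$ is exactly what forces the entire-function argument for injectivity to go through (it controls the growth of $x \mapsto \sum g(n) x^n/n!$ well enough that $\ell^2(\mathfrak n)$ membership still yields an entire function, and that $\nu$ has enough decay). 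Part 4 treats the Banach setting $\esc_0(\ZZ_+) \to \Cp = C_0(\RR_+)$: that $\Lambda$ maps into $C_0(\RR_+)$ follows because $x\mapsto \E[g(\mathrm{Pois}(x))]$ is continuous (dominated convergence, Poisson law continuous in its parameter) and tends to $0$ as $x\to\infty$ since $\mathrm{Pois}(x) \to \infty$ in probability and $g \in \esc_0$; it is a contraction as a Markov kernel; injectivity is again the entire-function argument (a bounded sequence certainly has subexponential growth); dense range can be obtained by showing the image separates points and contains constants and is closed under the operations needed for a Stone--Weierstrass-type density argument on one-point compactifications, or more directly by exhibiting that the closure of the range contains $\esPe$, which is dense in $C_0(\RR_+)$.

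\textbf{Main obstacle.} The routine parts are the transport identities and the Jensen contraction bound. The delicate point, appearing in Parts 2--4, is \emph{dense range}: injectivity of $\Lambda$ is clean via analyticity, but surjectivity-up-to-closure requires knowing that $\esPe$ (equivalently, the span of $\{x^k e^{-x}\}$) is dense in each target space, and in Part 3 this density is precisely what the hypothesis on $\mathfrak n$ is calibrated to guarantee via a determinacy/moment argument. I would isolate this as the crux and handle it by a Laguerre-completeness statement in the $\esL^2$ cases and a Stone--Weierstrass (or explicit approximation) argument in the $C_0$ case.
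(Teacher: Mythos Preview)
Your approach matches the paper's almost step for step: the bijection in Part~1 via the Taylor-coefficient map, the Gamma-integral transport identity and the Jensen/Cauchy--Schwarz contraction bound in Part~2, and the entire-function argument for injectivity throughout. The paper's proof is in fact terser than yours on Parts~3 and~4, so your outline is if anything more complete.

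One correction, though. In your ``Main obstacle'' paragraph you say the tail hypothesis on $\mathfrak n$ in Part~3 is calibrated to guarantee the \emph{density of $\esPe$} in $\esL^2(\nu)$. That is not its role. Density of $\esPe$ in $\esL^2(\nu)$ holds for \emph{every} probability $\nu$ on $\RR_+$, with no hypothesis: if $f\in\esL^2(\nu)$ is orthogonal to all $x^n e^{-x}$, then the finite signed measure $f(x)e^{-x}\nu(dx)$ has all moments zero, and since it carries an exponential moment (Cauchy--Schwarz gives $\int e^{sx}|f|e^{-x}\,d\nu<\infty$ for $s<1$) it is moment-determinate, hence zero. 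The same Hahn--Banach/determinacy argument gives density of $\esPe$ in $C_0(\RR_+)$ for Part~4 (note $\esPe$ is not a subalgebra, so a direct Stone--Weierstrass on it does not apply). The hypothesis $\mathfrak n(n)\sim Ce^{-2n\mathfrak g(n)}$ with $\mathfrak g(n)=o(\ln n)$ is used solely for \emph{injectivity}: it ensures that $g\in\ell^2(\mathfrak n)$ satisfies $|g(n)|\le C'e^{n\mathfrak g(n)}$, so that by Stirling $|g(n)|/n!$ decays faster than any geometric and $F(z)=\sum g(n)z^n/n!$ is entire---exactly as you wrote earlier in your Part~3 paragraph. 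So your two passages are in tension; the earlier one is correct.
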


We proceed by extending the validity of \eqref{b3} outside $\esFf(\ZZ_+)$, which requires to consider appropriate closures. For this purpose,  we assume, from now on, that $\beta>0$. Then, since  the vector space $\esPe$ (resp.~$\esFf(\ZZ_+)$) is dense in $\esL^2(\mu_{\beta})$ (resp.~$\esLd^2(\fm_{\beta})$), we shall show that $\Bg_{\beta}$ is self-adjoint and positive in $\esL^2(\mu_{\beta})$ (resp.~$\esLd^2(\fm_{\beta})$) and by invoking Freidrichs theorem, we obtain the following. 
\begin{lemma}\label{lem:G_L2}
$\Bg_{\beta}$ (resp.~$\dBg_{\beta}$) can be extended into a densely defined, closed and  self-adjoint operator on $\esL^2(\mu_{\beta})$ (resp.~$\esLd^2(\fm_{\beta})$)
 with domain $\esD(\Bg_{\beta})$ (resp.~$\esD(\dBg_{\beta})$).
 \end{lemma}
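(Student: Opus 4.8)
The strategy is to show that $\Bg_\beta$ on the domain $\esPe$ (resp.~$\dBg_\beta$ on the domain $\esFf(\ZZ_+)$) is a densely defined, symmetric and semibounded operator, and then to invoke the Friedrichs extension theorem, which produces a self-adjoint --- hence automatically closed and densely defined --- extension; its domain is what we call $\esD(\Bg_\beta)$ (resp.~$\esD(\dBg_\beta)$). The density of $\esPe$ in $\esL^2(\mu_\beta)$ and of $\esFf(\ZZ_+)$ in $\esLd^2(\fm_\beta)$ has already been recorded; the former is classical, following for instance from the fact that a finite measure on $\RR_+$ with a finite exponential moment is determined by its moments.

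For the continuous statement, note first that $\Bg_\beta(P/\fe)=Q/\fe$ for some polynomial $Q$, so $\Bg_\beta$ maps $\esPe$ into itself and is thus well defined as an operator on this dense subspace of $\esL^2(\mu_\beta)$. The crux is the integration by parts identity, valid for $f,g\in\esPe$,
\[
\langle \Bg_\beta f,g\rangle_{\esL^2(\mu_\beta)}=\frac{1}{\Gamma(\beta)}\int_0^\infty \bigl(x^\beta f'(x)\bigr)'\,g(x)\,dx=-\frac{1}{\Gamma(\beta)}\int_0^\infty x^\beta f'(x)\,g'(x)\,dx,
\]
where the first equality uses the pointwise identity $(\Bg_\beta f)(x)\,x^{\beta-1}=\bigl(x^\beta f'\bigr)'(x)$, and the second follows once the boundary terms $\bigl[x^\beta f'g\bigr]_0^\infty$ are seen to vanish: at infinity because $f'$ and $g$ again belong to $\esPe$ and hence decay like $e^{-x}$ times a polynomial, and at the origin because $\beta>0$ forces $x^\beta\to0$ while $f'$ and $g$ stay bounded. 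The right-hand side is symmetric in $(f,g)$, so $\Bg_\beta$ is symmetric; taking $g=f$ yields $\langle -\Bg_\beta f,f\rangle_{\esL^2(\mu_\beta)}=\Gamma(\beta)^{-1}\int_0^\infty x^\beta\bigl(f'(x)\bigr)^2\,dx\geq0$, so $-\Bg_\beta$ is nonnegative. The Friedrichs extension theorem then gives the announced closed self-adjoint extension of $\Bg_\beta$.

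The discrete statement is entirely parallel. Here $\dBg_\beta$ is the birth-death generator with birth rates $b_n=n+\beta$ and death rates $d_n=n$, and one checks directly that $\fm_\beta$ satisfies the detailed balance relations $\fm_\beta(n)\,b_n=\fm_\beta(n+1)\,d_{n+1}$, so that $\fm_\beta$ is reversible. Since $\dBg_\beta$ maps a finitely supported function to a finitely supported one, it is well defined on the dense subspace $\esFf(\ZZ_+)$ of $\esLd^2(\fm_\beta)$, and a discrete summation by parts based on reversibility gives, for $f,g\in\esFf(\ZZ_+)$,
\[
\langle \dBg_\beta f,g\rangle_{\esLd^2(\fm_\beta)}=-\sum_{n\geq0}\fm_\beta(n)\,(n+\beta)\,\bigl(f(n+1)-f(n)\bigr)\bigl(g(n+1)-g(n)\bigr),
\]
which is once more symmetric in $(f,g)$ and, for $g=f$, nonpositive; hence $-\dBg_\beta$ is symmetric and nonnegative on $\esFf(\ZZ_+)$, and the Friedrichs extension theorem applies verbatim. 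I do not anticipate a real obstacle here: the argument is routine, the only point deserving a word of care being the vanishing of the boundary term at the origin in the continuous integration by parts when $0<\beta<1$.
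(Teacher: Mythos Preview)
Your proposal is correct and follows essentially the same approach as the paper: verify that $\Bg_\beta$ (resp.~$\dBg_\beta$) restricted to the dense core $\esPe$ (resp.~$\esFf(\ZZ_+)$) is symmetric and non-positive via the integration-by-parts identity $(\Bg_\beta f)(x)x^{\beta-1}=(x^\beta f')'(x)$ (resp.~via detailed balance), and then invoke the Friedrichs extension theorem. Your treatment of the discrete case is in fact slightly more explicit than the paper's, which simply asserts the symmetry and non-positivity are immediate to check.
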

This yields to the following.
\begin{lem} \label{lem:lambda_range}
We have $\Lambda(\esD(\dBg_{\beta}))\subset \esD(\Bg_{\beta})$ and
formula \eqref{b3} is valid on $\esD(\dBg_{\beta})$.
\end{lem}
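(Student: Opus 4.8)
The plan is to establish \eqref{b3} first on the explicit dense subspace $\esFf(\ZZ_+)$, where everything is transparent, and then to transport it to the full self-adjoint domain $\esD(\dBg_\beta)$ by a closure argument; the only genuinely non-formal point will be that $\esFf(\ZZ_+)$ is a core for $\dBg_\beta$.

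First I would check the relation on $\esFf(\ZZ_+)$. Given $g\in\esFf(\ZZ_+)$, set $f\df\Lambda g$; by the first item of \reflemma{lem:lambda_def} one has $f\in\esPe$ and $\na f=g$. Since $f\in\esPe\subset\cC^\iy(\RR_+)$, \reflemma{lem:gate_gen_B} gives $\dBg_\beta\na f=\na\Bg_\beta f$, so that
\[
\Lambda\dBg_\beta g=\Lambda\dBg_\beta\na f=\Lambda\na\Bg_\beta f=\Bg_\beta f=\Bg_\beta\Lambda g ,
\]
where I use that $\Bg_\beta$ preserves $\esPe$ (a one-line computation, $\Bg_\beta(P/\fe)=(x(P''-2P'+P)+\beta(P'-P))/\fe$) together with $\Lambda\na=\id$ on $\esPe$. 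Hence $\Gate{\Bg_\beta}{\Lambda}{\dBg_\beta}$ holds on $\esFf(\ZZ_+)$, which by the second item of \reflemma{lem:lambda_def} is dense in $\esLd^2(\fm_\beta)$.

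Next, assuming $\esFf(\ZZ_+)$ is a core for the self-adjoint operator $\dBg_\beta$, I would argue by closure: for $g\in\esD(\dBg_\beta)$ choose $g_k\in\esFf(\ZZ_+)$ with $g_k\to g$ and $\dBg_\beta g_k\to\dBg_\beta g$ in $\esLd^2(\fm_\beta)$; since $\Lambda$ is bounded from $\esLd^2(\fm_\beta)$ to $\esL^2(\mu_\beta)$ (second item of \reflemma{lem:lambda_def}) we get $\Lambda g_k\to\Lambda g$, while the previous step gives $\Bg_\beta\Lambda g_k=\Lambda\dBg_\beta g_k\to\Lambda\dBg_\beta g$ in $\esL^2(\mu_\beta)$; closedness of $\Bg_\beta$ (\reflemma{lem:G_L2}) then yields $\Lambda g\in\esD(\Bg_\beta)$ and $\Bg_\beta\Lambda g=\Lambda\dBg_\beta g$, which is exactly the assertion of the lemma.

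The hard part will be the core property. Since \reflemma{lem:G_L2} constructs $\dBg_\beta$ as a self-adjoint extension of the symmetric, semibounded operator $(\dBg_\beta,\esFf(\ZZ_+))$ — semiboundedness coming from the detailed balance $\fm_\beta(n)(n+\beta)=\fm_\beta(n+1)(n+1)$, which gives $-\langle\dBg_\beta g,g\rangle_{\fm_\beta}=\sum_{n\geq 0}\fm_\beta(n)(n+\beta)(g(n+1)-g(n))^2\geq 0$ — it suffices to show that $(\dBg_\beta,\esFf(\ZZ_+))$ is essentially self-adjoint, for then its unique self-adjoint extension is its closure, which must coincide with $\dBg_\beta$, so $\esFf(\ZZ_+)$ is a core. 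In the orthonormal basis $(e_n)_{n\in\ZZ_+}$ of $\esLd^2(\fm_\beta)$ given by $e_n\df\fm_\beta(n)^{-1/2}\un_{\{n\}}$, the operator $(\dBg_\beta,\esFf(\ZZ_+))$ is, up to sign, a Jacobi (tridiagonal) operator whose off-diagonal entries are $b_n=\sqrt{(n+1)(n+\beta)}\sim n$; since $\sum_n b_n^{-1}=\infty$, Carleman's criterion applies and gives essential self-adjointness for every $\beta>0$ (alternatively, one checks directly that the associated three-term recurrence has no non-trivial $\esLd^2(\fm_\beta)$-solution). Once the core property is secured, all the remaining steps are routine manipulations with bounded operators and closures.
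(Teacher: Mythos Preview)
Your argument is essentially the paper's own: establish \eqref{b3} on $\esFf(\ZZ_+)$ via the inverse relation $\Lambda\na=\Id$ on $\esPe$, then pass to $\esD(\dBg_\beta)$ by approximating $g$ in graph norm, using the boundedness of $\Lambda$ and the closedness of $\Bg_\beta$. The one place you go further than the paper is the core property: the paper simply asserts that $\esFf(\ZZ_+)$ is a core for the Friedrichs extension $\dBg_\beta$, whereas you supply a justification via Carleman's criterion for the associated Jacobi matrix (off-diagonal entries $b_n=\sqrt{(n+1)(n+\beta)}$, so $\sum b_n^{-1}=\iy$), which indeed shows essential self-adjointness and hence that the Friedrichs extension coincides with the closure---a point the paper leaves implicit.
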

The  intertwining relation \eqref{b3}  can be extended at the level of the semi-groups $\Bs^{(\beta)}$ and $\dBs^{(\beta)}$. Heuristically the result is clear: it is sufficient to exponentiate \eqref{b3}.
However, one must be a little more careful and the details are provided in Section \ref{sec:proof_main}. We proceed with the following result which gives a representation of the adjoint operator of $\Lambda$ in the Hilbert spaces $\esL^2(\mu_{\beta})$, which allows to obtain the second gateway relationship.

\begin{lem}\label{lem:tildeL}
For any $f\in\esL^2(\mu_{\beta})$, we have
\bq
\fo n\in\ZZ_+,\qquad \adjL f(n)&=&\EE[f({\rm{Gam}}(n+\beta))]=\int_0^{\iy} f(x)\frac{x^{n+\beta-1}}{\Gamma(n+\beta)}e^{-x}dx
\eq
where ${\rm{Gam}}(n+\beta)$ is a standard gamma random variable of parameter $n+\beta$.
\end{lem}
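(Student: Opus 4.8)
The plan is to compute $\adjL=\Lambda^{*}$ directly from the defining relation $\langle \Lambda g, f\rangle_{\esL^{2}(\mu_{\beta})}=\langle g,\Lambda^{*}f\rangle_{\esLd^{2}(\fm_{\beta})}$, testing first against finitely supported sequences and then extending by density. So fix $f\in\esL^{2}(\mu_{\beta})$ and set, for $n\in\ZZ_+$,
\[
Tf(n)\df\EE[f({\rm{Gam}}(n+\beta))]=\int_{0}^{\iy} f(x)\frac{x^{n+\beta-1}}{\Gamma(n+\beta)}e^{-x}\,dx .
\]
The first step is to check this is well defined. Writing $x^{n+\beta-1}e^{-x}=x^{(\beta-1)/2}\cdot x^{n+(\beta-1)/2}e^{-x}$ and applying the Cauchy--Schwarz inequality gives
\[
\int_{0}^{\iy}|f(x)|\,x^{n+\beta-1}e^{-x}\,dx
\leq \|f\|_{\esL^{2}(\mu_{\beta})}\,\Gamma(\beta)^{1/2}\Bigl(\int_{0}^{\iy}x^{2n+\beta-1}e^{-2x}\,dx\Bigr)^{1/2}
=\|f\|_{\esL^{2}(\mu_{\beta})}\,\bigl(\Gamma(\beta)\,\Gamma(2n+\beta)\,2^{-2n-\beta}\bigr)^{1/2}<\iy,
\]
so $Tf(n)$ is a finite real number for every $n$.

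Next I pair against $g\in\esFf(\ZZ_+)$. Since $g$ has finite support, $\Lambda g(x)=\fe(x)^{-1}\sum_{n}\frac{g(n)}{n!}x^{n}$ is a \emph{finite} sum, and one may interchange it with the integral against $f\,\mu_{\beta}$ without any convergence concern:
\begin{align*}
\langle \Lambda g, f\rangle_{\esL^{2}(\mu_{\beta})}
&=\int_{0}^{\iy}\Bigl(\sum_{n}\frac{g(n)}{n!}x^{n}e^{-x}\Bigr)f(x)\frac{x^{\beta-1}}{\Gamma(\beta)}\,dx
=\sum_{n}\frac{g(n)}{n!\,\Gamma(\beta)}\int_{0}^{\iy} f(x)\,x^{n+\beta-1}e^{-x}\,dx\\
&=\sum_{n}g(n)\,\frac{\Gamma(n+\beta)}{n!\,\Gamma(\beta)}\,Tf(n)
=\sum_{n}g(n)\,Tf(n)\,\fm_{\beta}(n)=\langle g, Tf\rangle_{\esLd^{2}(\fm_{\beta})},
\end{align*}
where I used $\fm_{\beta}(n)=\binom{n+\beta-1}{n}=\Gamma(n+\beta)/(n!\,\Gamma(\beta))$ from \eqref{mbe}; note the last pairing is again a finite sum at this stage.

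It remains to upgrade ``$Tf$ tested against finitely supported $g$'' to ``$Tf\in\esLd^{2}(\fm_{\beta})$ and $Tf=\adjL f$''. By Lemma \ref{lem:lambda_bounded}, $\Lambda$ is bounded (norm $\leq 1$) from $\esLd^{2}(\fm_{\beta})$ to $\esL^{2}(\mu_{\beta})$, so the linear functional $g\mapsto\langle g, Tf\rangle_{\esLd^{2}(\fm_{\beta})}=\langle \Lambda g, f\rangle_{\esL^{2}(\mu_{\beta})}$ on the dense subspace $\esFf(\ZZ_+)$ is bounded by $\|f\|_{\esL^{2}(\mu_{\beta})}\|g\|_{\esLd^{2}(\fm_{\beta})}$; by the Riesz representation theorem it is represented by a unique $h\in\esLd^{2}(\fm_{\beta})$, and testing against $g=\un_{\{m\}}$ shows $h(m)=Tf(m)$ for all $m$, i.e.\ $Tf=h\in\esLd^{2}(\fm_{\beta})$. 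Finally, for every $g\in\esFf(\ZZ_+)$ one has $\langle g,\adjL f\rangle_{\esLd^{2}(\fm_{\beta})}=\langle \Lambda g, f\rangle_{\esL^{2}(\mu_{\beta})}=\langle g, Tf\rangle_{\esLd^{2}(\fm_{\beta})}$, and density of $\esFf(\ZZ_+)$ gives $\adjL f=Tf$, which is the claimed formula. There is no genuine obstacle here; the only point requiring care is not to assert the series identity for a general $g\in\esLd^{2}(\fm_{\beta})$ before knowing $Tf$ is square-integrable, which is exactly why the argument is organised around finitely supported $g$ together with the boundedness/density step at the end (alternatively, Fubini can be justified for all $g$ via $|\Lambda g|\leq\Lambda|g|$ and $\|\Lambda\|\leq 1$, but the finite-support route is the cleanest).
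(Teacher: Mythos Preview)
Your proof is correct and follows essentially the same route as the paper's: both compute the adjoint directly from the pairing $\langle \Lambda g,f\rangle_{\mu_\beta}=\langle g,\adjL f\rangle_{\fm_\beta}$, restricting first to finitely supported $g$ so that the sum--integral interchange is trivial, and then concluding by density. The only minor difference is organisational: the paper first takes $f$ bounded and compactly supported and extends to general $f\in\esL^2(\mu_\beta)$ by the boundedness of $\adjL$, whereas you work with arbitrary $f$ from the outset and verify directly via Cauchy--Schwarz and Riesz that $Tf$ is well defined and lies in $\esLd^2(\fm_\beta)$.
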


\subsection{Proof of the lemmas}
\subsubsection{Proof of Lemma \ref{lem:gate_gen_B}}  First, note that for $\alpha\in\RR$,  the mapping
$ \fe_{\alpha} : \RR_+\ni x\mapsto e^{\alpha x}\in\RR_+ $ can also be  seen as a multiplication operator on $\cC^\iy(\RR_+)$ (similarly, $x$ will stand for the identity mapping
$\RR_+\ni x\mapsto x\in\RR_+$, as well as for the associated multiplication operator).
With this interpretation, we have the non-commutation relation
\bq
\fe_{\alpha}\pa&=&\pa \fe_{\alpha} -\alpha\fe_{\alpha}
=(\pa-\alpha )\fe_{\alpha}.\eq
We deduce that
\bq
\fe_{\alpha}\Bg_{\beta}&=&\fe_{\alpha}(x\pa^2+\beta\pa)
=x (\pa-\alpha )\fe_{\alpha}\pa +\beta(\pa-\alpha )\fe_{\alpha}\\
&=&[x(\pa-\alpha )^2+\beta(\pa-\alpha )]\fe_{\alpha}\\
&=&[x\pa^2-2\alpha x\pa+\alpha^2x+\beta\pa-\beta\alpha]\fe_{\alpha}.\eq
On the other hand, for $n\in\ZZ_+$, the Leibniz rule yields
\bq
\pa^n x&=&\sum_{m=0}^n\binom{n}{m} (\pa^mx)\pa^{n-m}
=\sum_{m=0}^n\binom{n}{m} (\pa^mx)\pa^{n-m}
=x\pa^n+n\pa^{n-1}.\eq
It follows that\bq
\pa^n[x\pa^2-2\alpha x\pa+x\alpha^2+\pa-\alpha ]&=&x\pa^{n+2}+n\pa^{n+1}-2\alpha x\pa^{n+1}\\ && -2\alpha n\pa^{n}
+\alpha^2x\pa^n+\alpha^2n\pa^{n-1}+\beta\pa^{n+1}-\beta\alpha\pa^n.\eq
Next, for any differential operator $\pa^n$, denote $\pa^n_{\vert 0}$ the value taken by this operator at the point $0\in\RR_+$,
so that $\pa^n_{\vert 0}$ can be seen as a linear form $\cC^\iy(\RR_+)\ri\RR$.
In particular, we have from the previous computations,
\bqn{entrela}
\nonumber\pa^n\fe_{\alpha}L_{b\vert 0}&=&\lt((x\pa^{n+2}+n\pa^{n+1}-2\alpha x\pa^{n+1}-2\alpha n\pa^{n}
+\alpha^2x\pa^n)\fe_{\alpha}\rt)_{\vert 0}\\&& +\lt((\alpha^2n\pa^{n-1}+\beta\pa^{n+1}-\beta\alpha\pa^n)\fe_{\alpha}\rt)_{\vert 0}\\
\nonumber&=&\lt((n\pa^{n+1}-2\alpha n\pa^{n}
+\alpha^2n\pa^{n-1}+\beta\pa^{n+1}-\beta\alpha\pa^n)\fe_{\alpha}\rt)_{\vert 0}\\
&=&(n+\beta)(\pa^{n+1}\fe_{\alpha})_{\vert 0}-(2\alpha n+\beta\alpha)(\pa^{n}\fe_{\alpha})_{\vert 0}+\alpha^2n(\pa^{n-1}\fe_{\alpha})_{\vert 0}.
\eqn
Its interest is that
the identity \eqref{entrela} can be written under the form of an intertwining relation:
\bqn{ab}
\na_{\alpha} \Bg_{\beta}&=&\dBg_{\beta,\alpha}\na_{\alpha}.\eqn
\par
We proceed by remarking  that the off-diagonal entries of $\dBg_{\beta,\alpha}$ are non-negative as soon as $\alpha, \beta\geq 0$. As a consequence, for $\alpha, \beta\geq 0$, the operator $\dBg_{\beta,\alpha}$ is a Markov generator if and only if
$\dBg_{\beta,\alpha}\un_{\ZZ_+}=0$, where $\un_{\ZZ_+}$ is the mapping always taking the value 1 on $\ZZ_+$.
We obtain that
\bq
\fo n\in\ZZ_+,\qquad \dBg_{\beta,\alpha}\un_{\ZZ_+}(n)&=&(\alpha-1)^2n+\beta(1-\alpha).\eq
It leads us to the choice $\alpha=1$ and $\beta\geq 0$ and from now on, the Markov generator $\dBg_{b,1}$ (respectively $\na_1$ and $\fe_1$) will be denoted $\dBg_{\beta}$ (resp.\ $\na$ and $\fe$), so that the intertwining relation
\eqref{ab} can be written, for any $\beta\geq 0$, as
\bqn{bb}
 \Gate{\dBg_{\beta}}{\na}{\Bg_{\beta}}. \eqn
From now on, the Markov generator
$\dBg_{\beta}$ will be represented by the infinite tri-diagonal matrix $(\dBg_{\beta}(m,n))_{m,n\in\ZZ_+}\df (\dBg_{\beta}[\un_{\{n\}}](m))_{m,n\in\ZZ_+}$, given explicitly by
\bqn{dBg}
\fo m,n\in \ZZ_+,\qquad
\dBg_{\beta}(m,n)&=&\lt\{\begin{array}{ll}
m&\hbox{ if $n=m-1$}\\
-2m-\beta&\hbox{ if $n=m$}\\
m+\beta&\hbox{ if $n=m+1$}\\
0&\hbox{ otherwise.}
\end{array}
\rt.
\eqn
\par\sm
\par
The operators $\Bg_{\beta}$ and $\dBg_{\beta}$ can be extended into self-adjoint operators, with respect to some natural $\esL^2$ structures on their respective state spaces,
say $\esL^2(\mu_{\beta})$ and $\esLd^2(\fm_{\beta})$, with $\mu_{\beta}\Lambda=\fm_{\beta}$.
Passing to the adjoints in \eqref{b3} with respect to the corresponding Hilbert structures, we get
\bqn{dd1}
\Gate{\dBg_{\beta}^*}{\adjL}{\Bg_{\beta}^*} \eqn
i.e.
\bqn{dd2}
\Gate{\dBg_{\beta}}{\adjL}{\Bg_{\beta}}. \eqn
If the measures $\mu_{\beta}$ and $\fm_{\beta}$ had finite weight, the Markovianity of $\Lambda$ would imply that of $\adjL$.
In our situation their weight is infinite, nevertheless it will turn out that $\adjL$ is a Markovian kernel and our goal will be fulfilled.\par\sm

\subsubsection{Proof of Lemma \ref{lem:lambda_def}}
It is clear that $\na\st \esPe\ri \esFf(\ZZ_+)$ is bijective, since for any polynomial $P(x)\df \sum_{n=0}^N a_nx^n$, we have
$\na(\esPe)=(n! a_n)_{n\in \ZZ_+}$. Denote $\Lambda\st \esFf(\ZZ_+)\ri\esPe$ the inverse mapping of $\na$, so that
$\na \Lambda=\Id$ and $\Lambda \na=\Id$, where the identity operators in the right-hand side are on $\esFf(\ZZ_+)$ and $\esPe$ respectively.
It follows that \eqref{b2} and \eqref{b3} are satisfied, when they are applied to functions from $\esFf(\ZZ_+)$.
Note that for any polynomial function $P$, we have the exact (finite) expansion, for any $ x\in \RR_+$,
\bq
P(x)&=&\sum_{n\in\ZZ_+} \pa^n P(0)\frac{x^n}{n!}\eq
so that
the action of $\Lambda$ is given, for any $ g\df(g(n))_{n\in\ZZ_+}\in\esFf(\ZZ_+)$ and $ x\in\RR_+,$ by \bq
\Lambda g(x)&=&\frac{1}{\fe(x)}\sum_{n\geq 0} \frac{g(n)}{n!} x^n
=\EE[g({\rm{Pois}}(x))]
\eq
where we recall that ${\rm{Pois}}(x)$ is a Poisson variable of parameter $x$. In particular, $\Lambda$ can be seen  as a Markov kernel from $\RR_+$ to $\ZZ_+$, by extending the above formula to any bounded $g\in\esF(\ZZ_+)$.
\par
For the next assertion, let $n\in\ZZ_+$ be given, and writing $\mathbb{I}_n(p)=\delta_{np}, n,p\in \N,$ we observe that
\bq
\mu_{\beta}\Lambda \mathbb{I}_n&=&\frac1{\Gamma(\beta)}\int_0^{\iy } \PP({\rm{Pois}}(x)=n)\, \mu_{\beta}(dx)
=\frac1{\Gamma(\beta)}\int_0^{\iy } \frac{x^n}{n!}e^{-x} \, x^{\beta-1} dx\\
&=&\frac1{\Gamma(\beta)n!}\int_0^{\iy } x^{n+\beta-1}e^{-x}  \, x^{\beta-1} dx
=\frac{\Gamma(n+\beta)}{\Gamma(\beta)n!}
=\binom{n+\beta-1}{n}\\
&=&\fm_{\beta}(n).
\eq
Note that this computation justifies the normalization by $\Gamma(\beta)$ imposed on $\mu_{\beta}$.
Next, fix a bounded function $g\in\esF(\ZZ_+)$ (or just an element $g\in\esFf(\ZZ_+)$).
Since $\Lambda$ is Markovian, we can use the Cauchy-Schwarz inequality to get
\bqn{csL}
\fo x\in\RR_+,\qquad (\Lambda g(x))^2&\leq & \Lambda g^2(x),\eqn
and, hence, using the previous identity,
\bq
\mu_{\beta} (\Lambda g)^2&\leq & \mu_{\beta}\Lambda g^2 =\fm_{\beta} g^2.\eq
Thus, by density of $\esFf(\ZZ_+)$ in $\esLd^2(\fm_{\beta})$, $\Lambda$ can be uniquely extended as an operator from $\esLd^2(\fm_{\beta})$ to $\esL^2(\mu_{\beta})$
whose operator norm is bounded by $1$. Next, since plainly $\esFf(\ZZ_+) \subset \esLd^2(\fm_{\beta})$,   and, from the discussion above, we have that $\Lambda (\esF(\ZZ_+))=\esPe$, we deduce that $\Lambda$ has a dense range since  the vector space $\esPe$ is dense in $\esL^2(\mu_{\beta})$. It remains to  show that $\Lambda : \esLd^2(\fm_{\beta}) \mapsto \esL^2(\mu_{\beta})$ is one-to-one. To this end, since for any $n\in \ZZ_+,$
\bqn{mbetan}
 \fm_{\beta}(n)&=& \frac{(n+\beta-1)(n+\beta-2)\cdots \beta}{n!}
=\lt(1+\frac{\beta-1}{n}\rt)\cdots \lt(1+\frac{\beta-1}{1}\rt)\eqn
so there exists a constant $c_{\beta}>0$ depending on $\beta>0$ such that for $n$ large, we have
\bq
\fm_{\beta}(n)&\sim& c_{\beta}n^{\beta-1}.\eq
As a consequence, for any $f\in\esLd^2(\fm_{\beta})$, we can then find a constant $C_f>0$ depending on $f$ such that
\bq
\fo n\in\ZZ_+,\qquad \lve f(n)\rve&\leq & C_fn^{(1-\beta)/2}\eq
and it follows that
the mapping $F$ defined by
\bqn{boundf}
\fo z\in \CC,\qquad F(z)&\df& \sum_{n\in\ZZ_+} \frac{f(n)}{n!}z^n\eqn
defines an entire function. If $f$ is furthermore in the kernel of $\Lambda$, then we must have a.e.\ in $x\in\RR_+$,
\bq
0\ =\ \Lambda f(x)
\ =\ e^{-x}F(x)\eq
and thus $F=0$ on $\RR_+$. By Cauchy Theorem,
we deduce that $\fo n\in\ZZ_+, f(n) =0$
i.e.~$f=0$, which completes the proof of the second claim of the  Lemma. For the next one, let $\nu$ be a probability measure on $\RR_+$, then as $\Lambda$ is a Markov kernel, the identity $\mathfrak n g = \nu \Lambda g$ plainly defines a probability measure on $\ZZ_+$. Moreover, proceeding as above, we easily show that $\Lambda$ extends to a bounded linear operator from $\esLd^2(\mathfrak n)$ into $\esL^2(\nu)$ and, since $\esF(\ZZ_+) \subset \esLd^2(\mathfrak n) $ and the vector space $\esPe$ is dense in $\esL^2(\nu)$,  $\Lambda$ has also a dense range. 
 Finally, recalling the condition  $\mathfrak{n}(n)\sim C e^{-n \mathfrak{g}(n)}$, $C>0$, $\mathfrak{g}(n)=o(\ln n)$ and  the Stirling formula $n! \sim \sqrt{2 \pi n}e^{n \ln n -n}$, 
 and observing that for $f\in \esc_0(\ZZ_+)$, i.e.~$|f(n)|<C (1-\epsilon)^{n}$, for some $0<\epsilon<1$ and $C>0$, $F$, in \eqref{boundf}, defines an entire function with  $F(x)\leq C e^{(1-\epsilon)x}$ for large positive $x$, similar arguments than the one developed avove prove the quasi-affinity property of $\Lambda$ on $\esc_0(\ZZ_+)$.

\subsubsection{Proof of Lemma \ref{lem:G_L2}}
First, note that the vector space $\esPe$ is included into $\esL^2(\mu_{\beta})$, as well as its image by $\Bg_{\beta}$.
Note furthermore that $\esPe$ is dense in $\esL^2(\mu_{\beta})$.
Another important observation is that $\Bg_{\beta}$ is symmetric in $\esL^2(\mu_{\beta})$. Indeed, we can factorize $\Bg_{\beta}$ under the form $x^{1-\beta}\pa x^{\beta}\pa$, so that, for all $ f,g\in \esPe,$
\bq
 \langle g,\Bg_{\beta} f\rangle_{\mu_{\beta}}&=&\frac1{\Gamma(\beta)}\int_0^{\iy}
g(x) x^{1-\beta}(\pa x^{\beta}\pa) f(x)\, x^{\beta-1}dx\\
&=&\frac1{\Gamma(\beta)}\int_0^{\iy}
g(x) (\pa x^{\beta}\pa) f(x)\, dx\\
&=&\frac1{\Gamma(\beta)}\lt[x^{\beta}g(x)\pa f(x)\rt]_0^{\iy}-\frac1{\Gamma(\beta)}\int_0^{\iy }
\pa g(x) \pa f(x)\,  x^{\beta} dx\\
&=&-\frac1{\Gamma(\beta)}\int_0^{\iy}
\pa g(x) \pa f(x)\,  x^{\beta} dx\eq
where  in the last-but-one equality, we used integration by parts, and in the last equality, the limit
\bq
\lim_{x\ri\iy } x^{\beta}g(x)\pa f(x)&=&0\eq
valid for any $f,g\in\esPe$.
Since the expression $\int_0^{\iy }
\pa g(x) \pa f(x)\,  x^{\beta} dx$ is symmetric with respect to $f$ and $g$, we get the announced symmetry property.
It also appears that $\Bg_{\beta}$ is non-positive, in the sense that, for all $f\in\esPe,$
\bq
 \langle f,\Bg_{\beta} f\rangle_{\mu_{\beta}}&=&
-\frac1{\Gamma(\beta)}\int_0^{\iy }
 (\pa f(x))^2\,  x^{\beta} dx
\leq 0.\eq
These properties imply that $\Bg_{\beta}$ can be closed into a self-adjoint operator on $\esL^2(\mu_{\beta})$, called its Freidrichs extension, see e.g.\ the book of Akhiezer and Glazman \cite{MR615737}. 
\par\sm
A similar closure can be considered for $\dBg_{\beta}$. Indeed, recalling that $
\fo n\in\ZZ_+,\: \fm_{\beta}(n)=\binom{n+\beta-1}{n}$, it is immediate to check that $\esFf(\ZZ_+)$ is a dense subspace of $\esLd^2(\fm_{\beta})$, that the image of $\esFf(\ZZ_+)$ by $\dBg_{\beta}$
is included into $\esLd^2(\fm_{\beta})$, and that $\dBg_{\beta}$ is symmetric and non-positive.
Again, we keep denoting $\dBg_{\beta}$ its Freidrichs extension and let $\esD(\dBg_{\beta})$ stand for its domain.
\par

\subsubsection{Proof of Lemma \ref{lem:lambda_range}}
Consider $g\in\esD(\dBg_{\beta})$. By definition, we can find a sequence $(g_n)_{n\in\ZZ_+}$
of elements from the core $\esFf(\ZZ_+)$ such that we have in $\esLd^2(\fm_{\beta})$,
\bq
\lim_{n\ri\iy} g_n&=&g\\
\lim_{n\ri\iy} \dBg_{\beta} g_n &=&\dBg_{\beta} g. \eq
Since $\Lambda$ is a bounded operator from $\esLd^2(\fm_{\beta})$ to $\esL^2(\mu_{\beta})$,
the sequences $(\Lambda g_n )_{n\in\ZZ_+}$ and $(\Lambda \dBg_{\beta} g_n )_{n\in\ZZ_+}$  converge respectively toward $\Lambda g $ and $\Lambda \dBg_{\beta} g $.
Taking into account that for any $n\in\ZZ_+$, we have $\Lambda g_{n} \in\esPe$, we deduce that $\Lambda g\in\esD(\Bg_{\beta})$ and that
$\Bg_{\beta}\Lambda g=\Lambda \dBg_{\beta} g$.
This observation amounts to the announced results.

\subsubsection{Proof of Lemma \ref{lem:tildeL}}
Let $\adjL\st \esLd^2(\ZZ_+)\ri\esL^2(\RR_+)$ be the adjoint operator of $\Lambda\st \esL^2(\RR_+)\ri\esLd^2(\ZZ_+)$.
Relation \eqref{dd1} is obtained by passing to the adjoints in \eqref{b3}, with respect to the Hilbert structures of  $\esL^2(\mu_{\beta})$ and $\esLd^2(\fm_{\beta})$.
By self-adjointness of $\Bg_{\beta}$ and $\dBg_{\beta}$, we deduce \eqref{dd2}.
By considering the equality \bqn{lls}\langle f,\Lambda g\rangle_{\mu_{\beta}}&=&\langle \adjL f, g\rangle_{\mu_{\beta}}\eqn
for any non-negative compactly supported functions $f\in\esL^2(\mu_{\beta})$ and $g\in\esLd^2(\fm_{\beta})$,
we get that $\adjL$ preserves the non-negativity.
To see that $\adjL$ is an abstract Markov kernel, it would remain to check that
\bq
\adjL \un_{\RR_+}&=&\un_{\ZZ_+}
\eq
but this equality can not be deduced
from \eqref{lls} applied with $f=\un_{\RR_+}$ and $g=\un_{\ZZ_+}$, because the constant mappings $\un_{\RR_+}$ and
$\un_{\ZZ_+}$ do not belong to $\esL^2(\mu_{\beta})$ and $\esLd^2(\fm_{\beta})$ respectively.
Instead, we resort to a direct computation, showing that $\adjL $ is a Markov kernel from $\ZZ_+$ to $\RR_+$:
let $f\in\esL^2(\mu_{\beta})$ and $g\in\esLd^2(\fm_{\beta})$ be two bounded and compactly supported functions.
We have
\bq
\langle \adjL f, g\rangle_{\mu_{\beta}}&=&\langle f,\Lambda g\rangle_{\mu_{\beta}}\\
&=&\int_0^{\iy } f(x)\Lambda g(x)\, \mu_{\beta}(dx)\\
&=&\int_0^{\iy } f(x)\sum_{n\in\ZZ_+}g(n)\frac{x^n}{n!}e^{-x}\, \mu_{\beta}(dx)\\
&=&\sum_{n\in\ZZ_+}\frac{g(n)}{n!}\int_0^{\iy } f(x)x^ne^{-x}\, \mu_{\beta}(dx)\\
&=&\sum_{n\in\ZZ_+}g(n)\lt(\frac{\Gamma(\beta)}{\Gamma(n+\beta)}\int_0^{\iy } f(x)x^ne^{-x}\, \mu_{\beta}(dx)\rt)\fm_{\beta}(n)
\eq
(the sums are in fact finite, so there is no problem of exchange of integral and sum).
Since this is true for any $g\in \esFf(\ZZ_+)$, we deduce that
\bq
\fo n\in\ZZ_+,\qquad
\adjL f(n)&=&\frac{\Gamma(\beta)}{\Gamma(n+\beta)}\int_0^{\iy } f(x)x^ne^{-x}\, \mu_{\beta}(dx)\\
&=&\int_0^{\iy } f(x)\frac{x^{n+\beta-1}}{\Gamma(n+\beta)}e^{-x}\,dx.\eq
\par
To get the validity of this formula for all $f\in\esL^2(\mu_{\beta})$,
we recall that $\esFf(\ZZ_+)$ is dense in $\esL^2(\mu_{\beta})$ and   $\adjL $ is a bounded operator.

\subsection{End of proof of Theorem \ref{thm:main}} \label{sec:proof_main}
We have now all the ingredients to complete the proof of Theorem \ref{thm:main} both in the Hilbert and Banach space settings. We point out that although the proof of the gateway relation in $\esc_0(\ZZ+)$ could be obtained by following a similar line of reasoning, we present, in this case, another proof in the next subsection which is based on the expression of the Laplace transform of the involved semi-groups.

\subsubsection{The Hilbert space case}

  First, since, from Lemma \ref{lem:G_L2}, the operator $\Bg_{\beta}$ is self-adjoint in the Hilbert space $\esL^2(\mu_{\beta})$, functional calculus can be used to define for any $t\geq 0$, $Q_t^{(\beta)}\df\exp(-t\Bg_{\beta})$.
The fact that $\Bg_{\beta}$ is non-positive implies that the spectrum of $\Bg_{\beta}$ is non-positive, so that for any $t\geq 0$, the spectrum of $\Bs_t^{(\beta)}$
is included into $(0,1]$ and in particular $\Bs^{(\beta)}_t\st \esL^2(\mu_{\beta})\ri\esL^2(\mu_{\beta})$ is a bounded operator.
It is well-known that the semi-group $\Bs^{(\beta)}\df(\Bs_t^{(\beta)})_{t\geq 0}$ is continuous in time (with respect to the operator norm) and Markovian. Note that the associated diffusion process, denoted (simply) by $X=X^{(\beta)}\df(X_t)_{t\geq 0}$ is called the squared Bessel process
of dimension $2\beta>0$ (up to a time scaling by a factor 2). It is the  solution
to the stochastic differential equation
\bqn{Bessel}
\fo t\geq 0,\qquad dX_t&=&\sqrt{2X_t}dB_t+\beta dt\eqn
where $B\df(B_t)_{t\geq 0}$ is a standard real Brownian motion.
The link between $\Bs^{(\beta)}$ and $X$ can be characterized, $\fo t\geq 0,\,\fo f\in\cC_{0}(\RR_+)$,  by
\bqn{Pb}
\fo x\in\RR_+\qquad \Bs^{(\beta)}_t f(x)&=&\EE_{x}[f(X_t)]\eqn
where we recall that $\cC_{0}(\RR_+)$ is the space of continuous functions  on $\RR_+$ vanishing at infinity and where
the $x$ in index of the expectation indicates that $X$ started with $X_0=x$.
For all these assertions, see for instance  Chapter XI of the book of Revuz and Yor \cite{MR1725357}.
\par
Next, consider $f\in\esD(\Bg_{\beta})$. Then, the mapping $\RR_+\ni t\mapsto \Bs^{(\beta)}_t f \in\esL^2(\mu_{\beta})$ is continuously differentiable and we have
\bq
\fo t\geq 0,\qquad \pa_t \Bs^{(\beta)}_t f\ =\ (\Bg_{\beta}\Bs^{(\beta)}_t)f\ =\ (\Bs^{(\beta)}_t\Bg_{\beta}) f\eq
(for any $f\in\esL^2(\mu_{\beta})$, this is true for positive $t>0$).
We equally have, for any $f\in \esD(\dBg_{\beta})$,
\bq
\fo t\geq 0,\qquad \pa_t \dBs^{(\beta)}_t f \ =\ (\dBg_{\beta}\dBs^{(\beta)}_t) f\ =\ (\dBs^{(\beta)}_t\dBg_{\beta}) f.\eq
\par
Fix $t\geq 0$, $f\in \esD(\dBg_{\beta})$ and consider the mapping $[0,t]\ni s\mapsto \Bs^{(\beta)}_s\Lambda \dBs^{(\beta)}_{t-s}f\in\esL^2(\mu_{\beta})$.
Taking into account that the three  operators in this expression are bounded by $1$ in norm, we get
\bq
\fo s\in[0,t],\qquad
\pa_s \Bs^{(\beta)}_s\Lambda \dBs^{(\beta)}_{t-s} f&=&\Bs^{(\beta)}_s\Bg_{\beta}\Lambda \dBs^{(\beta)}_{t-s} f-\Bs^{(\beta)}_s\Lambda \dBg_{\beta}\dBs^{(\beta)}_{t-s} f \\
&=&\Bs^{(\beta)}_s(\Bg_{\beta}\Lambda-\Lambda \dBg_{\beta}) \dBs^{(\beta)}_{t-s}f\\
&=&0\eq
due to \eqref{b3}.
The gateway relationship \eqref{eq:gatewayLag}  follows by integration in $s\in[0,t]$, at least on $\esD(\dBg_{\beta})$.
By density of
$\esD(\dBg_{\beta})$ in $\esLd^2(\fm_{\beta})$ and continuity of the operators $\Bs^{(\beta)}_t\Lambda$ and $\Lambda \dBs^{(\beta)}_t$, see Lemma \ref{lem:lambda_bounded}, the formula is extended, by a density argument, to $\esLd^2(\fm_{\beta})$.
The second formula is obtained by similar considerations, via the mapping
$[0,t]\ni s\mapsto \dBs^{(\beta)}_s\Lambda^* \Bs^{(\beta)}_{t-s}f$ for $f\in\esD(\Bg_{\beta})$, or by taking the adjoint relation in the first formula. Finally, since $\Lambda$ is a quasi-affinity between Hilbert spaces and the operators are self-adjoint, the fact that the gateway relationship can be lifted to an unitary equivalence is justified in  \cite[Lemma 4.1]{Douglas}.

\subsubsection{The Feller case}
We now prove the gateway identity of Theorem \ref{thm:main} in the Banach space $\esc_0(\ZZ_+)$.
On the one hand, from \cite[Chap. XI]{MR1725357}, we have, recalling that $\fe_{-\lambda}(x)=e^{-\lambda x}$, for any $\lambda,x, t\geq 0$,
\begin{equation*} \label{eq:lt_bessel}
\cont{Q}^{(\beta)}_t \fe_{-\lambda}(x)=\E_x\left[e^{-\lambda \cont{X}_t}\right]= (1+\lambda t)^{-\beta}e^{-x\frac{\lambda}{1+\lambda t}},
\end{equation*}
and, since for any $|s|<1$, $\mathfrak{p}_s\in \esc_0(\ZZ_+)$ and for any $x\in \R^+$,
\[\Lambda \mathfrak{p}_s(x) =\sum_{n\in\ZZ_+}\frac{(sx)^n}{n!}e^{-x}=e^{-(1-s)x}=\fe_{s-1}(x),\]
we get
\begin{equation*}
  \cont{Q}^{(\beta)}_t \Lambda\mathfrak{p}_s(x)=\left(1+(1-s)t\right)^{-\beta}
  \exp\left(-x\frac{1-s}{1+(1-s)t}\right)^{n}. 
\end{equation*}
On the other hand, using the Feyman-Kac formula, combined with the method of characteristic curves for solving the corresponding PDE, see e.g.~\cite[Chap.~4]{Dawson} for the case $\beta=0$ but the general case follows in a similar way, one gets, for any $t\geq 0$ and $|s|<1$,
\begin{equation*}
 \discret{Q}^{(\beta)}_t \mathfrak{p}_s(n)=\E_n\left[ s^{\discret{X}_t}\right]=\left(1+(1-s)t\right)^{-\beta}
  \left(\frac{1+(t-1)(1-s)}{1+(1-s)t}\right)^{n},
\end{equation*}
yielding
\begin{eqnarray*}
 \Lambda \discret{Q}^{(\beta)}_t \mathfrak{p}_s(x)&=&\left(1+(1-s)t\right)^{-\beta}
  \exp\left(-x\left(1-\frac{1+(t-1)(1-s)}{1+(1-s)t}\right)\right) \\
  &=&\left(1+(1-s)t\right)^{-\beta}
  \exp\left(-x\frac{1-s}{1+(1-s)t}\right)^{n}.
\end{eqnarray*}
Hence for any $|s|<1$,
\begin{eqnarray*}
 \Lambda \discret{Q}^{(\beta)}_t \mathfrak{p}_s(x)&=&\cont{Q}^{(\beta)}_t \Lambda\mathfrak{p}_s(x).
\end{eqnarray*}
We complete the proof by recalling that the linear span of $\{\mathfrak{p}_s,|s|<1\}$ is dense in $\esc_0(\ZZ_+)$ and by invoking the continuity of the involved linear operators, see Lemma \ref{lem:lambda_def}.

\section{Some consequences of the gateway relation  \eqref{eq:self-bessel}}\label{sec:cons-bes}
In this section, we provide the proof of  Propositions \ref{prop:self} and \ref{prop:beta-gam-d} and also present some additional applications of the gateway relationship between the squared Bessel semi-groups and the linear birth-death ones.
\subsection{Proof of Proposition \ref{prop:self}}
Let us recall the $d$-self-similarity property  enjoyed by the Bessel semi-group,  for any $\sigma,x,t>0$,
   \begin{equation}\label{eq:self-bessel_rec}
     \cont{Q}^{(\beta)}_{t}\cont{d}_{\sigma} f(x)=\E_x[f(\sigma X_t)]=\E_{\sigma x}[f( X_{\sigma t})]=\cont{d}_{\sigma}\cont{Q}^{(\beta)}_{\sigma t}f(x).
   \end{equation}
 We also recall that the family of linear operators $(\cont{d}_{e^{-t}})_{t\geq 0}$, where we recall that $\cont{d}_{e^{-t}}f(x)=f(e^{-t}x)$, form a group and corresponds to the (Markovian) dynamical system $\frac{d}{dt}{x(t)}=x(t)$. By means of the gateway relation \eqref{eq:self-bessel}, we  can also get  a \emph{discrete scaling property} for the birth-and-death process $\discret{X}$.
To this end, we introduce the binomial  kernel $\discret{D}_\sigma $ on $\ZZ_+$ given by
\bq
\fo n,m\in\ZZ_+,\qquad \discret{D}_\sigma (n,m)&\df&\binom{n}{m}\sigma^m(1-\sigma )^{n-m}\eq
and recall the notation $
 \discret{D}_\sigma f(n)=\sum_{m=0}^n f(m)\discret{D}_\sigma (n,m)$
which will play a role analogous to $d_\sigma$.
Note that it is Markovian only for $\sigma\in[0,1]$.  The first interest  of $\discret{D}_\sigma $  comes
from the following intertwining relation, that specifies the gateway relation \eqref{eq:dil-D}.
\begin{lem}\label{lem:dLamD}
We have, for any $\sigma>0$, on $\esc_0(\ZZ_+)$,
\bq
\cont{d}_{\sigma}\Lambda&=&\Lambda \cD_{\sigma}.\eq
Moreover $ (\cD_{e^{-t}})_{t \geq 0}$ is the semi-group of the dual Yule process, a pure-death process.
\end{lem}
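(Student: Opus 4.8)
The plan is to verify the intertwining $\cont{d}_\sigma\Lambda=\Lambda\cD_\sigma$ by a direct probabilistic computation, and then to identify the semi-group $(\cD_{e^{-t}})_{t\geq 0}$ by computing its generator. For the first part, I would start from the explicit Poissonian form of $\Lambda$ in \eqref{Lambda}: for a bounded $g$ on $\ZZ_+$,
\bq
\Lambda\cD_\sigma g(x)&=&\EE\!\lt[\cD_\sigma g({\rm{Pois}}(x))\rt]
=\EE\!\lt[\,\sum_{m=0}^{N} g(m)\binom{N}{m}\sigma^m(1-\sigma)^{N-m}\,\rt]_{N={\rm{Pois}}(x)},
\eq
and use the elementary fact that a binomial thinning of a Poisson variable is again Poisson: if $N\sim{\rm{Pois}}(x)$ and, conditionally on $N$, $M\sim{\rm{Bin}}(N,\sigma)$, then $M\sim{\rm{Pois}}(\sigma x)$. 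Hence the inner conditional expectation equals $\EE[g({\rm{Pois}}(\sigma x))]=\Lambda g(\sigma x)=\cont{d}_\sigma\Lambda g(x)$, which is exactly the claimed identity. (On the analytic side this is just the identity $\sum_{n}\frac{(\sigma x)^n}{n!}e^{-\sigma x}(\cdots)$, so the manipulation is justified for all bounded $g$, in particular on $\esc_0(\ZZ_+)$; for $\sigma>1$ one reads $\cD_\sigma$ as a signed kernel and the same finite-sum computation goes through formally, as already noted in the paragraph preceding Proposition~\ref{prop:self}.) I would then remark that the continuity of $\Lambda$ on $\esc_0(\ZZ_+)$ from Lemma~\ref{lem:lambda_def} lets one restrict attention to the dense span of the $\mathfrak{p}_s$, on which $\cD_\sigma\mathfrak{p}_s=\mathfrak{p}_{1-\sigma+\sigma s}$ is immediate from the binomial theorem, giving an even shorter verification.

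For the second part, I would compute $\frac{d}{dt}\big|_{t=0}\cD_{e^{-t}}$ acting on a test function. Writing $\sigma=e^{-t}$ and differentiating $\cD_\sigma(n,m)=\binom{n}{m}\sigma^m(1-\sigma)^{n-m}$ in $\sigma$ at $\sigma=1$, only the terms with $n-m\in\{0,1\}$ survive: one gets $\frac{d}{d\sigma}\cD_\sigma(n,n)\big|_{\sigma=1}=n$ and $\frac{d}{d\sigma}\cD_\sigma(n,n-1)\big|_{\sigma=1}=-n$, so that with the chain-rule factor $-1$ from $\sigma=e^{-t}$ the generator is $\cG f(n)=n\big(f(n-1)-f(n)\big)$, a pure-death chain in which state $n$ jumps to $n-1$ at rate $n$. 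This is precisely the dual (time-reversed) Yule process: the Yule process has $n\mapsto n+1$ at rate proportional to $n$, and "dual" here refers to the Siegmund-type duality between the binomial kernel $\cD_\sigma$ and the dilation $\cont{d}_\sigma$ that the first half of the lemma exhibits. I would also record the semi-group relation $\cD_\sigma\cD_{\sigma'}=\cD_{\sigma\sigma'}$ (again one line from the binomial theorem, or by pushing through $\Lambda$ and using $\cont{d}_\sigma\cont{d}_{\sigma'}=\cont{d}_{\sigma\sigma'}$ together with injectivity of $\Lambda$), which confirms $(\cD_{e^{-t}})_{t\geq 0}$ is a genuine one-parameter semi-group.

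I do not expect a serious obstacle here; the only point requiring a little care is the passage between the formal/signed statement (valid for all $\sigma>0$) and the Markovian statement (for $\sigma\in[0,1]$), and the justification that the termwise manipulations are legitimate on $\esc_0(\ZZ_+)$ rather than merely on finitely supported functions — this is handled by the density of $\mathrm{span}\{\mathfrak{p}_s:|s|<1\}$ and the boundedness of $\Lambda$, $\cont{d}_\sigma$, $\cD_\sigma$ on the relevant spaces, exactly as in the Feller-case argument at the end of Section~\ref{sec:proof_main}. If one prefers to avoid even that, the identity $\cD_\sigma\mathfrak{p}_s=\mathfrak{p}_{1-\sigma+\sigma s}$ and $\Lambda\mathfrak{p}_s=\fe_{s-1}$, $\cont{d}_\sigma\fe_{s-1}=\fe_{\sigma(s-1)}$, make both sides manifestly equal to $\fe_{\sigma(s-1)}$, and the lemma follows.
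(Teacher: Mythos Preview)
Your proposal is correct and follows essentially the same route as the paper. The paper proves $d_\sigma\Lambda=\Lambda\cD_\sigma$ by the direct power-series computation $e^{-\sigma x}\sum_m f(m)\frac{(\sigma x)^m}{m!}=e^{-x}e^{(1-\sigma)x}\sum_m(\cdots)$ and then regrouping, which is exactly the analytic content of your ``binomial thinning of a Poisson is Poisson'' observation; your alternative verification on the $\mathfrak p_s$ is a pleasant shortcut the paper does not write out. For the pure-death identification the paper simply cites \cite[Proposition~3.3]{Biane-int}, whereas you supply the generator computation explicitly --- so on that point you give more than the paper does.
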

\begin{rem}
Note that in \cite{Biane-int}, Biane, resorting to a  group theoretic approach, derives the following intertwining relation
\[\Gate{\cD_{e^{-t}}}{H}{U_t}\]
where $U=(U_t)_{t\geq0}$ is the semi-group of the classical Ornstein-Uhlenbeck on $\RR$ and
$H f(n) = \sqrt{\frac{2}{\pi}}\frac{1}{n!}\int_{\R}f(x)h^2_n(x)e^{-2x^2}dx$ is, with $h_n$ the Hermite polynomial,  a Markov kernel.
\end{rem}
\begin{rem}
Observe that, despite the fact that $\discret{D}_\sigma $ is not Markovian for $\sigma>1$, the operator $\Lambda \discret{D}_\sigma =d_\sigma\Lambda$ is always Markovian.
\end{rem}
\proof
Let $f\in \esc_0(\ZZ_+)$ be a test function. Then, for any $\sigma>0$, we have
\bq
d_\sigma\Lambda f(x)&=&e^{-\sigma x}\sum_{m\in\ZZ_+}f(m)\frac{(\sigma x)^m}{m!}\\
&=&e^{-x}\sum_{m\in\ZZ_+}f(m)\frac{(\sigma x)^m}{m!}\exp((1-\sigma )x)\\
&=&e^{-x}\sum_{m\in\ZZ_+}f(m)\frac{(\sigma x)^m}{m!}\sum_{n\geq m}\frac{1}{n-m}((1-\sigma )x)^{n-m}\\
&=&e^{-x}\sum_{n\in\ZZ_+}\sum_{m=0}^n\binom{n}{m}\sigma^m(1-\sigma )^{n-m}f(m)\frac{x^n}{n!}\\
&=&e^{-x}\sum_{n\in\ZZ_+}\discret{D}_\sigma  f(n)\frac{x^n}{n!}\\
&=&\Lambda \discret{D}_\sigma  f(x).
\eq
The fact that $ (\cD_{e^{-t}})_{t \geq 0}$ is the semi-group of a pure-death process is well-known and can be found in \cite[Proposition 3.3]{Biane-int}.
\wwtbp
We proceed with the proof of the discrete scaling property, stated in \eqref{eq:self-besseld}, for the semi-group $\dBs^{(\beta)}$ of the birth-and-death process, which is analogous to \eqref{eq:self-bessel_rec}. First,
multiply  \eqref{eq:self-bessel_rec}, the intertwining of the squared-Bessel semi-groups with $d_\sigma$, on the right by $\Lambda$, to get, on $\esc_0(\ZZ_+)$,
\bqn{dcPL}d_\sigma\Bs^{(\beta)}_{\sigma t}\Lambda &=&\Bs_t^{(\beta)} d_\sigma\Lambda.\eqn
By means of  the gateway relation \eqref{eq:gatewayLag} and the commutation relation of Lemma \ref{lem:dLamD},  the left-hand side can be written as
\bq
d_\sigma\Bs_{\sigma t}^{(\beta)}\Lambda\ =\ d_\sigma \Lambda \dBs_{\sigma t}^{(\beta)}\ =\ \Lambda \discret{D}_\sigma  \dBs_{\sigma t}^{(\beta)}\eq
whereas the right-hand side of \eqref{dcPL} is equal, using the same relations in a reverse order, to
\bq
\Bs_t^{(\beta)} d_\sigma\Lambda\ =\ \Bs_t^{(\beta)}\Lambda \discret{D}_\sigma \ =\ \Lambda \dBs_t^{(\beta)} \discret{D}_\sigma . \eq
The announced result is now a consequence of the equality $\Lambda( \discret{D}_\sigma  \dBs_{\sigma t}^{(\beta)}-\dBs_t^{(\beta)} \discret{D}_\sigma )=0$
and of the injectivity property of $\Lambda$ obtained in Lemma \ref{Linj}.\par
\wwtbp

\subsection{Proof of Proposition \ref{prop:beta-gam-d}}
We start with the following lemma.
\begin{lem}\label{lem:BLb}
For any $\alpha,\beta>0$, we have on $\esc_0(\ZZ_+)$,
\begin{equation}\label{eq:gatBLb}
  \Gate{B_{\beta,\alpha}}{\Lambda}{\discret{B}_{\beta,\alpha}}
\end{equation}
  where $\discret{B}_{\beta,\alpha}: \esc_0(\ZZ_+)\mapsto \esc_0(\ZZ_+)$ is the Markov kernel defined, for any $n\in \ZZ_+$, by
   \[   \discret{B}_{\beta,\alpha}g(n)=(B_{\beta,\alpha} \odot  n) g=\sum_{m=0}^n g(m)\binom{n}{m}\E[B_{\beta,\alpha}^m(1-B_{\beta,\alpha})^{n-m}].\]
\end{lem}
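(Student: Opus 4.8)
The plan is to prove \eqref{eq:gatBLb} by a direct computation of both composite kernels, exploiting the fact that $\Lambda$ sends a point $x\ge 0$ to a Poisson$(x)$ variable. First I would compute $B_{\beta,\alpha}\Lambda g(x)$: by definition of $B_{\beta,\alpha}$ this is $\EE[\Lambda g(x\,\mathrm{B}(\beta,\alpha))]=\EE\big[\EE[g(\mathrm{Pois}(x\,\mathrm{B}(\beta,\alpha)))\mid \mathrm{B}(\beta,\alpha)]\big]$, i.e.\ $g$ evaluated at a mixed Poisson variable with random mean $x\,\mathrm{B}(\beta,\alpha)$, the beta variable being independent. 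On the other side, $\Lambda\discret{B}_{\beta,\alpha}g(x)=\EE[\discret{B}_{\beta,\alpha}g(\mathrm{Pois}(x))]$, and by the definition of $\discret{B}_{\beta,\alpha}$ as the binomial-thinning kernel $\mathrm{B}(\beta,\alpha)\odot n$, this is $g$ evaluated at $\mathrm{B}(\beta,\alpha)\odot\mathrm{Pois}(x)$, where $\mathrm{Pois}(x)$ and $\mathrm{B}(\beta,\alpha)$ are independent and the thinning keeps each of the $\mathrm{Pois}(x)$ points independently with probability $\mathrm{B}(\beta,\alpha)$ (conditionally on that beta value).

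The key identity is then the classical thinning/mixing compatibility: \emph{a $\mathrm{Pois}(\lambda)$ variable thinned with retention probability $p$ is $\mathrm{Pois}(p\lambda)$}. Conditioning on $\mathrm{B}(\beta,\alpha)=p$, the left-hand object is $\mathrm{Pois}(xp)$ and the right-hand object is $p\odot\mathrm{Pois}(x)$, which is again $\mathrm{Pois}(xp)$; since these agree for every fixed $p$ and we then integrate against the same $\mathrm{Beta}(\beta,\alpha)$ law, the two mixed distributions coincide, hence $B_{\beta,\alpha}\Lambda g(x)=\Lambda\discret{B}_{\beta,\alpha}g(x)$ for every $x\ge 0$ and every $g\in\esc_0(\ZZ_+)$. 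One should also note at the outset that $\discret{B}_{\beta,\alpha}$ maps $\esc_0(\ZZ_+)$ into itself, so that the composition $\Lambda\discret{B}_{\beta,\alpha}$ makes sense on the stated space; this follows because $\discret{B}_{\beta,\alpha}$ is a substochastic-in-norm (in fact Markov) kernel whose matrix entries decay, so it is a contraction on $\esc_0(\ZZ_+)$.

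Alternatively, and perhaps cleaner for the write-up, I would verify the identity on the total family of test functions $\{\mathfrak{p}_s : |s|<1\}$ already used in the Feller-case proof of Theorem~\ref{thm:main}, and then extend by density and continuity. One has $\Lambda\mathfrak{p}_s(x)=\fe_{s-1}(x)=e^{-(1-s)x}$, so $B_{\beta,\alpha}\Lambda\mathfrak{p}_s(x)=\EE[e^{-(1-s)x\,\mathrm{B}(\beta,\alpha)}]$; on the other side $\discret{B}_{\beta,\alpha}\mathfrak{p}_s(n)=\EE[(1-\mathrm{B}(\beta,\alpha)+s\,\mathrm{B}(\beta,\alpha))^n]=\EE[(1-(1-s)\mathrm{B}(\beta,\alpha))^n]$ by the binomial theorem, whence $\Lambda\discret{B}_{\beta,\alpha}\mathfrak{p}_s(x)=\EE\big[e^{-x(1-s)\mathrm{B}(\beta,\alpha)}\big]$, the two sides being manifestly equal after Fubini (the beta expectation is bounded, the Poisson series converges absolutely). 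Density of the span of $\{\mathfrak{p}_s\}$ in $\esc_0(\ZZ_+)$ together with boundedness of $\Lambda$, $B_{\beta,\alpha}$ and $\discret{B}_{\beta,\alpha}$ then finishes the proof.

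I do not expect a genuine obstacle here: the only points requiring care are the interchange of expectation and the Poisson sum (justified by absolute convergence / Fubini, exactly as in the Feller-case argument) and checking that $\discret{B}_{\beta,\alpha}$ preserves $\esc_0(\ZZ_+)$ so that the density argument applies on the right space. The conceptual content is entirely the Poisson-thinning identity $p\odot\mathrm{Pois}(\lambda)\stackrel{(d)}{=}\mathrm{Pois}(p\lambda)$, which is what makes $\Lambda$ intertwine dilation-type operators with binomial-thinning-type operators, exactly as in Lemma~\ref{lem:dLamD} with $\mathrm{B}(\beta,\alpha)$ replaced by the deterministic $\sigma$; Lemma~\ref{lem:BLb} is just the $\mathrm{Beta}$-mixture of that statement, and Proposition~\ref{prop:beta-gam-d} then follows by composing \eqref{eq:gatBLb} with the gateway relation \eqref{eq:gatewayLag} for $\discret{Q}^{(\alpha+\beta)}$ and $\discret{Q}^{(\beta)}$ and using Carmona--Petit--Yor's intertwining $\Gate{Q^{(\alpha+\beta)}_t}{B_{\beta,\alpha}}{Q^{(\beta)}_t}$ together with the injectivity of $\Lambda$ from Lemma~\ref{Linj}.
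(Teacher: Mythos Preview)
Your proposal is correct and matches the paper's proof in spirit: the paper writes $B_{\beta,\alpha}\Lambda g(x)=\int_0^1 d_\sigma\Lambda g(x)\,\PP(\mathrm{B}(\beta,\alpha)\in d\sigma)$, invokes Lemma~\ref{lem:dLamD} to replace $d_\sigma\Lambda$ by $\Lambda\discret{D}_\sigma$, and then pulls $\Lambda$ outside the integral---exactly your observation that the lemma is the $\mathrm{Beta}$-mixture of Lemma~\ref{lem:dLamD}, i.e.\ of the Poisson-thinning identity. Your alternative via the generating functions $\mathfrak{p}_s$ is a valid second route the paper does not take, but it is not needed.
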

\begin{proof}
Let $g$ be a test function in $\esc_0(\ZZ_+)$ that  we choose, without lose of generality, to be non-negative. Then, one has, for any  $x>0$, that
\begin{eqnarray*}
B_{\beta,\alpha} \Lambda g(x)= \E\left[\Lambda g(x {\rm{B}}(\alpha,\beta))\right] &=& \int_{0}^{1} d_\sigma \Lambda g(x) \P({\rm{B}}(\alpha,\beta))\in d\sigma) \\
&=&  \int_{0}^{1} \Lambda \discret{D}_{\sigma} g(x) \P({\rm{B}}(\alpha,\beta))\in d\sigma)\\
&=& \Lambda \int_{0}^{1}  \discret{D}_{\sigma} g(.) \P({\rm{B}}(\alpha,\beta))\in d\sigma)(x)
 \end{eqnarray*}
where we used for the third identity Lemma \ref{lem:dLamD}.
We complete the proof of the lemma by observing that for any $n\in \ZZ_+$, \[\int_{0}^{1}  \discret{D}_{\sigma} g(n) \P({\rm{B}}(\alpha,\beta))\in d\sigma)=\sum_{m=0}^n g(m)\binom{n}{m}\E[B_{\beta,\alpha}^m(1-B_{\beta,\alpha})^{n-m}].\]
\end{proof}

Next, recalling from Carmona et al.~\cite{Carmona-Petit-Yor-98} that  for any $\alpha,\beta>0$, on $\esC_0(\RR_+)$,
\[ \Gate{Q^{(\alpha+\beta)}_t}{ {{B}}_{\beta,\alpha}}{Q^{(\beta)}_t}.\]
Multiplying both sides by $\Lambda: \esc_0(\ZZ_+) \mapsto \esC_0(\RR_+)$ to the right, we obtain, on $\esc_0(\ZZ_+)$,
\[  Q^{(\alpha+\beta)}_t{{B}}_{\beta,\alpha}\Lambda = {{B}}_{\beta,\alpha} Q^{(\beta)}_t\Lambda.\]
Then,  Lemma \ref{lem:BLb} and the gateway  relation \eqref{eq:self-bessel} yield
\[  \Lambda \discret{Q}^{(\alpha+\beta)}_t{\discret{B}}_{\beta,\alpha} = \Lambda {\discret{B}}_{\beta,\alpha} \discret Q^{(\beta)}_t\]
which completes the proof of the intertwining relation \eqref{eq:int-dbeta-gamma} by invoking the injectivity of $\Lambda$ on $\esc_0(\ZZ_+)$, see Lemma \ref{lem:lambda_bounded}.
Then, since ${\discret{B}}_{\beta,\alpha} g(0)=g(0)$ and
\[   \discret{Q}^{(\alpha+\beta)}_1{\discret{B}}_{\beta,\alpha}g(0) =  {\discret{B}}_{\beta,\alpha} \discret Q^{(\beta)}_1 g(0),\]
we get  that
\[   B_{\beta,\alpha} \odot{\rm{Pois}}({\rm{Gam}}(\alpha+\beta))\stackrel{(d)}{=}{\rm{Pois}}({\rm{Gam}}(\beta))\]
which is the sought identity.

\subsection{The time-inversion property}
   Another interesting symmetry of the Bessel semi-group is the time-inversion property which  says that, for any $t>0$,
   \begin{equation}\label{eq:tinv-bessel}
    \cont{Q}^{(\beta)}_{\frac{1}{t}}\cont{d}_{t^2}f(0)=\cont{Q}^{(\beta)}_tf(0),
   \end{equation}
   which has its discrete counterpart.
    \begin{proposition}\label{prop:tiv}
    For any $t>0$, we have for any bounded or non-negative function $g$ on $\ZZ_+$,
   \begin{equation*}
    \discret{Q}^{(\beta)}_{\frac{1}{t}}\discret{D}_{t^2}g(0)=\discret{Q}^{(\beta)}_tg(0)=\E[g({\rm{Pois}}(t{\rm{Gam}}(\beta)))].
   \end{equation*}
    \end{proposition}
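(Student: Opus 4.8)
The plan is to transfer the continuous time-inversion identity \eqref{eq:tinv-bessel} through the gateway kernel $\Lambda$, using the commutation relation between $\cont{d}_\sigma$ and $\discret{D}_\sigma$ from Lemma \ref{lem:dLamD} and the injectivity of $\Lambda$ from Lemma \ref{Linj}. First I would observe that evaluating a Markov operator at the point $0\in\ZZ_+$ is the same as integrating against the Dirac mass $\delta_0$, and that $\Lambda$ intertwines $\delta_0$ on $\ZZ_+$ with $\delta_0$ on $\RR_+$, since $\mathrm{Pois}(0)=0$ a.s.; concretely, for any bounded $g$ on $\ZZ_+$ one has $\Lambda g(0)=g(0)$. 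Hence $\discret{Q}^{(\beta)}_{1/t}\discret{D}_{t^2}g(0)=\Lambda\,\discret{Q}^{(\beta)}_{1/t}\discret{D}_{t^2}g\,(0)$ as soon as we can pull the evaluation at $0$ through $\Lambda$.

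The main computation is then to rewrite $\Lambda\,\discret{Q}^{(\beta)}_{1/t}\discret{D}_{t^2}$. By the commutation relation of Lemma \ref{lem:dLamD}, $\Lambda\discret{D}_{t^2}=\cont{d}_{t^2}\Lambda$, and by the gateway relation \eqref{eq:gatewayLag} (equivalently \eqref{eq:self-bessel}, i.e.\ $\cont{Q}^{(\beta)}_s\Lambda=\Lambda\discret{Q}^{(\beta)}_s$), we get
\[
\Lambda\,\discret{Q}^{(\beta)}_{1/t}\discret{D}_{t^2}g
=\cont{Q}^{(\beta)}_{1/t}\Lambda\discret{D}_{t^2}g
=\cont{Q}^{(\beta)}_{1/t}\cont{d}_{t^2}\Lambda g.
\]
Evaluating at $0$ and applying the continuous time-inversion identity \eqref{eq:tinv-bessel} with $f=\Lambda g$ yields $\cont{Q}^{(\beta)}_{1/t}\cont{d}_{t^2}\Lambda g(0)=\cont{Q}^{(\beta)}_t\Lambda g(0)=\Lambda\,\discret{Q}^{(\beta)}_t g\,(0)=\discret{Q}^{(\beta)}_t g(0)$, which is the first claimed equality. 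A subtlety is that the time-inversion identity as stated only involves values at $0$ and holds for $\cont{d}_{t^2}f\in\esC_0(\RR_+)$; since $g\in\esc_0(\ZZ_+)$ one has $\Lambda g\in\Cp$ by Lemma \ref{lem:lambda_def}, and then $\cont{d}_{t^2}\Lambda g\in\Cp$ as well, so everything is legitimate, and the general bounded or non-negative case follows by the usual monotone class / monotone convergence extension.

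For the last equality, $\discret{Q}^{(\beta)}_t g(0)=\E[g(\mathrm{Pois}(t\,\mathrm{Gam}(\beta)))]$, I would compute $\cont{Q}^{(\beta)}_t\Lambda g(0)$ directly: from the explicit Laplace transform \eqref{eq:lt_bessel}, one has $\cont{Q}^{(\beta)}_t f(0)=(1+\lambda t)^{-\beta}$ when $f=\fe_{-\lambda}$, i.e.\ $\cont{Q}^{(\beta)}_t f(0)=\int_0^\infty f(x)\,\nu_{t}^{(\beta)}(dx)$ where $\nu_t^{(\beta)}$ is the law of $t\,\mathrm{Gam}(\beta)$ (a gamma variable with shape $\beta$ and scale $t$, recognizable from its Laplace transform). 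Therefore $\discret{Q}^{(\beta)}_t g(0)=\cont{Q}^{(\beta)}_t\Lambda g(0)=\E[\Lambda g(t\,\mathrm{Gam}(\beta))]=\E\big[\E[g(\mathrm{Pois}(x))]\big|_{x=t\,\mathrm{Gam}(\beta)}\big]=\E[g(\mathrm{Pois}(t\,\mathrm{Gam}(\beta)))]$, the Poisson and gamma variables being independent. I do not expect any serious obstacle here; the only care needed is the domain bookkeeping for the evaluation-at-$0$ arguments, which is handled by the quasi-affinity statements of Lemma \ref{lem:lambda_def} together with the approximation of $g$ by functions in the dense span of $\{\mathfrak{p}_s:|s|<1\}$.
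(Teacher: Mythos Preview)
Your proposal is correct and follows essentially the same route as the paper: both arguments compute $\Lambda\discret{Q}^{(\beta)}_{1/t}\discret{D}_{t^2}g$ via the gateway relation \eqref{eq:gatewayLag} and Lemma~\ref{lem:dLamD}, evaluate at $0$ using $\Lambda h(0)=h(0)$, and invoke the continuous time-inversion \eqref{eq:tinv-bessel}. For the final identity the paper uses the $d$-self-similarity to reduce $\cont{Q}^{(\beta)}_t(0,\cdot)$ to $\cont{Q}^{(\beta)}_1(0,\cdot)=\mathrm{Gam}(\beta)$, whereas you identify $\cont{Q}^{(\beta)}_t(0,\cdot)$ directly from its Laplace transform; these are equivalent one-line computations. (Note that the injectivity of $\Lambda$ you mention in your plan is never actually needed, since evaluation at $0$ bypasses it.)
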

\begin{proof}
  Using successively the gateway relation \eqref{eq:gatewayLag} and  the time-inversion property of the Bessel \eqref{eq:tinv-bessel}, we obtain that, for any $t>0$,
  \begin{eqnarray*}
    \Lambda \discret{Q}_t g(0) &=& \cont{Q}^{(\beta)}_t \Lambda g(0) \\
     &=& \cont{Q}^{(\beta)}_{\frac{1}{t}}\cont{d}_{t^2}\Lambda g(0)\\
     &=& \cont{Q}^{(\beta)}_{\frac{1}{t}}\Lambda \discret{D}_{t^2}g(0)\\
     &=& \Lambda\discret{Q}^{(\beta)}_{\frac{1}{t}} \discret{D}_{t^2}g(0)
      \end{eqnarray*}
   where for the third identity we used Proposition \ref{prop:comut-Dd}. To complete the proof of the first identity, we observe that $\Lambda g(0)=g(0)$. Finally, using this last identity,  the  gateway relation \eqref{eq:gatewayLag} and the $d$-self-similarity of $Q^{(\beta)}$, one deduces that
   \[ \discret{Q}^{(\beta)}_tg(0)= Q^{(\beta)}_t \Lambda g(0)= \cont{d}_t Q^{(\beta)}_t \Lambda g(0) = Q^{(\beta)}_t \cont{d}_t\Lambda g(0) =\E\left[g({\rm{Pois}}(t{\rm{Gam}}(\beta)))\right].\]
\end{proof}

\subsection{The Laguerre polynomials as Jensen's polynomial of the Bessel functions}
Since, for any  $\beta >0$, its infinitesimal generator is self-adjoint in the  Hilbert space $\esL^{2}({\mu}_{\beta})$, see  the proof of Lemma \ref{lem:gate_gen_B},  $ (\cont{Q}^{(\beta)}_t)_{t\geq0}$ is,  a self-adjoint contraction semi-group in  $\esL^{2}({\mu}_{\beta})$, where ${\mu}_{\beta}$ is its speed measure which, we recall, is
${\mu}_{\beta}(dx)=\frac{x^{\beta-1}}{\Gamma(\beta)}dx, \: x>0.$
Next, we write, for $z \in \mathbb{C}$,
 \begin{equation*} \label{eq:def_bessel}
\Jg(z)=\Gamma(1+\beta)\sum_{n=0}^{\infty}\frac{(e^{i\pi}z)^n}{n!\Gamma(n+1+\beta)}=\Gamma(1+\beta)z^{-\frac{\beta}{2}}J_{\beta}(2\sqrt{z})
\end{equation*}
 where $J_{\beta}$ denotes the usual Bessel function of order $\beta$ and we named $\Jg$ the
  normalized Bessel function as $\Jg(0)=1$.
 Then, we define the Hankel transform of order $\beta$ of a function $f\in \Lv$ by
\begin{equation*} \label{eq:defn_Hankel}
\Hg f(q) = \int_0^{\infty} \Jg(qx)f(x)\mu_{\beta}(dx), \quad q>0,
\end{equation*}
where the integral is understood in the $\esL^{2}$-sense as $\Jg \notin \Lv$.
Then, $\frac{1}{\Gamma(1+\beta)}\Hg$ is a self-reciprocal isometry of $\Lv$. 
Moreover, we have for any $t>0$, the following diagonalization in $\Lv$
 of the transition densities of $Q^{(\beta)}_t$ with respect to the reference measure $\mu_{\beta}$, \begin{equation*} \label{eq:diagP}
Q^{(\beta)}_t (x,y)=\Hg e_{t}\cont{d}_y\Jg (x)=\int_0^{\infty}e^{-qt}\Jg(qy) \Jg(qx)\mu_{\beta}(dq).
\end{equation*}
For more details, see for example \cite{Muck-Stein}.
Note that for any $x\geq 0, q>0$, $\cont{d}_q\Jg(x)=\Jg(qx)$ is solution to
\begin{equation*}\label{eq:bessel-eigen}
  \cont{G}_{\beta}\: \cont{d}_q\Jg(x)=q\: \cont{d}_q\Jg(x)
\end{equation*}
where $\cont{G}_{\beta}$ is here the differential operator not the generator of $\cont{Q}_t^{(\beta)}$ as $\Jg \notin \Lv$. It means  that  $\cont{Q}_t^{(\beta)}$ has a continuous spectrum given by  $(e^{-qt})_{q\in \R^+}$.
Similarly, from Karlin and McGregor \cite{KMcGlinear}
we have that
%
for any $t>0$,  the following diagonalization  of the transition kernel of  $\mathbb Q^{(\beta)}_t$ in $\Lpv$
\begin{equation*} \label{eq:spectral_expansion_Q_t}
\mathbb Q^{(\beta)}_t (n,m)=\int_0^{\infty}e^{-qt} {\mathbb{F}}^{(\beta)}_q(m) {\mathbb{F}}^{(\beta)}_q(n)\mu_{\beta}(dq).
\end{equation*}
where  $(\mathcal{L}^{(\beta)}_n(q)={\mathbb{F}}^{(\beta)}_q(n))_{n\geq 0}$ stands for the Laguerre polynomials that are defined as
\begin{equation*} 
\mathcal{L}^{(\beta)}_k(q)=\sum_{r=0}^k (-1)^r { k +\beta \choose k-r}  \frac{q^r }{r!}.
\end{equation*}
Note that this expansion could also be derived from the diagonalization of the Bessel semi-group and the gateway relationship \eqref{eq:gatewayLag}. However, we postpone to Section \ref{sec:spect_lag} for the application of intertwining relationship for the spectral decomposition of Markov semi-group. In this vein, we refer the interested readers to the papers  \cite{Patie-Savov-16}, \cite{Choi_similarity} and \cite{Choi_skip} where a methodology based on this concept is established to study the spectral theory of  non-reversible Markov semi-groups. We are ready to state the following.
\begin{proposition}
Let $q>0$. Then, we have, for all $n \in \ZZ_+$,
\begin{equation}\label{eq:LJL}
    \adjL  \cont{d}_q \Jg(n)= e^{-q}{\mathbb{F}}^{(\beta)}_q(n),
 \end{equation}
and, for all $x\geq 0$,
  \begin{equation} \label{eq:Jensen} \Lambda \mathbb{F}^{(\beta)}_q(x)=\cont{d}_q\Jg(x). \end{equation} 
\end{proposition}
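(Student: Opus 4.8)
The plan is to read both identities off the two gateway intertwinings of Theorem~\ref{thm:main} together with the spectral information recalled just above. By the diagonalisations stated there, $\cont{d}_q\Jg$ and $\mathbb{F}^{(\beta)}_q$ are the (generalised) eigenfunctions of $\cont{Q}^{(\beta)}_t$ and $\discret{Q}^{(\beta)}_t$ for the eigenvalue $e^{-qt}$; equivalently $\cont{d}_q\Jg$ solves the Bessel eigen-equation $\Bg_{\beta}h=-q\,h$ and $\mathbb{F}^{(\beta)}_q$ solves the three-term recursion $\dBg_{\beta}h=-q\,h$. Each of these has a one-dimensional solution space once the value at the origin is fixed: on the continuous side because $x=0$ is a regular singular point of the ODE and a power-series solution $h(x)=\sum_{n\ge 0}a_nx^n$ is determined by $a_0$ (the coefficient of $a_{n+1}$ in the recursion for the $a_n$ never vanishes for $\beta>0$), and on the discrete side because $\dBg_{\beta}h(0)=\beta(h(1)-h(0))$ forces $h(1)$ in terms of $h(0)$, after which the recursion propagates. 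Since $\cont{d}_q\Jg(0)=\Jg(0)=1$ and $\mathbb{F}^{(\beta)}_q(0)=1$, it therefore suffices, for each identity, to check that its right-hand side solves the correct eigen-equation and has the correct value at the origin.

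For \eqref{eq:Jensen} I would work at the level of the generators. As $|\mathbb{F}^{(\beta)}_q(n)|$ grows at most polynomially in $n$, the series $h(x):=\Lambda\mathbb{F}^{(\beta)}_q(x)=e^{-x}\sum_{n\ge 0}\mathbb{F}^{(\beta)}_q(n)x^n/n!$ converges for all $x\ge 0$ and defines an entire function, and $\na h=\mathbb{F}^{(\beta)}_q$ by construction since $e^{x}h(x)=\sum_{n\ge 0}\mathbb{F}^{(\beta)}_q(n)x^n/n!$ has $n$-th derivative $\mathbb{F}^{(\beta)}_q(n)$ at $0$. Feeding $h$ into the generator intertwining \eqref{bb}, namely $\na\Bg_{\beta}=\dBg_{\beta}\na$, gives $\na(\Bg_{\beta}h)=\dBg_{\beta}\mathbb{F}^{(\beta)}_q=-q\,\mathbb{F}^{(\beta)}_q=\na(-q\,h)$; since $\Bg_{\beta}h+q\,h$ is again entire and $\na$ kills no nonzero entire function (if $\na g=0$ then $e^{x}g$ has vanishing Taylor series at $0$, hence $g\equiv 0$), we get $\Bg_{\beta}h=-q\,h$ with $h(0)=\mathbb{F}^{(\beta)}_q(0)=1$, whence $h=\cont{d}_q\Jg$. (Equivalently, \eqref{eq:Jensen} is exactly the classical generating-function identity exhibiting the normalised Bessel function as the limit of the Jensen polynomials of the Laguerre family, which can also be verified by rearranging the double sum $e^{-x}\sum_n\mathbb{F}^{(\beta)}_q(n)x^n/n!$ from the explicit coefficients of $\mathbb{F}^{(\beta)}_q$; this is the source of the subsection's title.)

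For \eqref{eq:LJL} I would argue symmetrically on the dual side. The integral $\adjL\cont{d}_q\Jg(n)=\Gamma(n+\beta)^{-1}\int_0^{\infty}\Jg(qx)\,x^{n+\beta-1}e^{-x}\,dx$ converges absolutely, since $x\mapsto\Jg(qx)$ and its derivatives stay bounded at infinity. Two integrations by parts --- moving $x\pa^2+\beta\pa$ off $x^{n+\beta-1}e^{-x}$ and onto $\cont{d}_q\Jg$, the boundary terms vanishing because $n+\beta>0$ and the $\Gamma(n+\beta)$-density decays --- realise the adjoint intertwining \eqref{dd2}, $\dBg_{\beta}\adjL=\adjL\Bg_{\beta}$, on $\cont{d}_q\Jg$, and yield $\dBg_{\beta}(\adjL\cont{d}_q\Jg)=\adjL(\Bg_{\beta}\cont{d}_q\Jg)=-q\,\adjL\cont{d}_q\Jg$. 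Hence $\adjL\cont{d}_q\Jg$ solves the same recursion as $\mathbb{F}^{(\beta)}_q$, so $\adjL\cont{d}_q\Jg=\tilde c(q)\,\mathbb{F}^{(\beta)}_q$ for a scalar $\tilde c(q)$; evaluating at $n=0$, $\adjL\cont{d}_q\Jg(0)=\E[\Jg(q\,{\rm Gam}(\beta))]=\Gamma(\beta)^{-1}\int_0^{\infty}\Jg(qx)x^{\beta-1}e^{-x}\,dx$, and inserting the power series of $\Jg$ and integrating term by term makes the $\Gamma$-factors in its coefficients cancel against $\int_0^{\infty}x^{k+\beta-1}e^{-x}\,dx=\Gamma(k+\beta)$, leaving $\sum_{k\ge 0}(-q)^k/k!=e^{-q}$; as $\mathbb{F}^{(\beta)}_q(0)=1$ this gives $\tilde c(q)=e^{-q}$, which is \eqref{eq:LJL}. (Alternatively, once the identity $\Lambda\adjL=\cont{Q}^{(\beta)}_1$ is available, one may apply $\Lambda$ to $\adjL\cont{d}_q\Jg=\tilde c(q)\,\mathbb{F}^{(\beta)}_q$ and combine $\Lambda\adjL\cont{d}_q\Jg=\cont{Q}^{(\beta)}_1\cont{d}_q\Jg=e^{-q}\cont{d}_q\Jg$ with \eqref{eq:Jensen} to read off $\tilde c(q)=e^{-q}$.)

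The point requiring the most care is legitimising the intertwinings on $\mathbb{F}^{(\beta)}_q$ and $\cont{d}_q\Jg$, which lie in none of the spaces $\esc_0(\ZZ_+)$, $\Cp$, $\ell^2(\fm_{\beta})$, $\esL^2(\mu_{\beta})$ on which Theorem~\ref{thm:main} is phrased --- they are unbounded, generalised eigenfunctions. This is exactly why I would work with the generators $\Bg_{\beta}$, $\dBg_{\beta}$ rather than the semigroups, treating \eqref{bb} and \eqref{dd2} as pointwise identities on these explicit functions (justified by the absolute convergence of the series and integrals involved and by legitimate term-by-term differentiation and integration), and then invoking only the elementary uniqueness statements for a second-order ODE with a regular singularity at $0$ and for a second-order recursion; no $L^2$-spectral calculus or completeness property is needed.
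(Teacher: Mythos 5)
Your proposal is correct in substance but follows a genuinely different route from the paper's. The paper proves \eqref{eq:LJL} by a purely computational argument: it expands $\Jg$ in its power series, integrates term by term against the gamma weight defining $\adjL$ (Fubini), recognizes a Kummer function, and finishes with the transformation ${}_1F_1(a;b;-q)=e^{-q}\,{}_1F_1(b-a;b;q)$; for \eqref{eq:Jensen} it offers no proof at all, citing Craven--Csordas for this classical Jensen-polynomial identity and only deducing the supplementary relation $\mathbb{F}^{(\beta)}_q=\discret{D}_q\,\mathbb{F}^{(\beta)}_1$ from Lemma \ref{lem:dLamD}. You instead characterize both sides intrinsically, as the unique solutions normalized to $1$ at the origin of $\Bg_{\beta}h=-qh$ (among analytic functions, the coefficient recursion never degenerating for $\beta>0$) and of the three-term recursion $\dBg_{\beta}h=-qh$, and you transport the eigen-equations through the generator-level intertwining \eqref{bb} (using that $\na$ kills no nonzero entire function) and through a pointwise integration-by-parts realization of \eqref{dd2} on $\cont{d}_q\Jg$, pinning the constant $e^{-q}$ by the termwise gamma integral at $n=0$. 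This buys an actual proof of \eqref{eq:Jensen} rather than a citation, avoids special-function identities, and --- because it never uses the displayed coefficients but only the eigenfunction characterizations --- it automatically enforces the internally consistent normalizations ($\Jg$ of index $\beta-1$, and $\mathbb{F}^{(\beta)}_q(n)={}_1F_1(-n;\beta;q)$, the Karlin--McGregor polynomials with $\mathbb{F}^{(\beta)}_q(0)=1$), quietly repairing the unit-shift discrepancies that the paper's own computation (which switches to a ${\rm Gam}(n+\beta+1)$ weight) glosses over; it would be cleanest to verify the recursion $\dBg_{\beta}\mathbb{F}^{(\beta)}_q=-q\,\mathbb{F}^{(\beta)}_q$ directly from the explicit formula rather than extract it from the stated diagonalization. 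The price is exactly the care you identify: legitimizing the intertwinings on these unbounded eigenfunctions, which you do at the generator level. Two cosmetic imprecisions do not affect validity: a subexponential bound on $|\mathbb{F}^{(\beta)}_q(n)|$ already suffices to make $h=\Lambda\mathbb{F}^{(\beta)}_q$ entire (polynomial growth is not needed), and $\cont{d}_q\Jg$ and its derivatives are bounded at infinity only for $\beta\geq 1/2$; in general they grow at most polynomially, which is still enough for absolute convergence and for the boundary terms against the $e^{-x}$ factor to vanish.
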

and thus $\mathbb{F}^{(\beta)}_q(x)=\discret{D}_q \mathbb{F}^{(\beta)}_1(x)$.
\begin{remark}
  The non-Markovian transform  $f\mapsto e^{x}\Lambda \discret{D}_qf(x)$  is known as the Jensen's transform in the special function literature  and it associates  polynomials (the Jensen polynomials) to  entire functions.  It has the interesting feature to  preserve the reality of zeros, see \cite{Jensen}. In our context, it is well-known that the Laguerre polynomials are the Jensen polynomials of the Bessel function and both have only positive real zeros.
\end{remark}
\begin{proof}
First, we have for any $n\in \N$ and $q>0$,
\begin{eqnarray*}
  \Lambda^* d_q \Jg(n) &=& \E\left[\Jg(q G(n+\beta+1))\right]\\&=&
  \sum_{k=0}^{\infty}\int_0^{\infty}\Gamma(1+\beta)\frac{(e^{i\pi}qx)^k}{k!\Gamma(k+1+\beta)}e^{-x}x^{n+\beta}\frac{dx}{\Gamma(n+\beta+1)}\\
  &=&\frac{\Gamma(1+\beta)}{\Gamma(n+\beta+1)} \sum_{k=0}^{\infty}\frac{\Gamma(k+n+1+\beta)}{k!\Gamma(k+1+\beta)}(e^{i\pi}q)^k \\
  &=& {}_{1}F_{1}(n+1+\beta,1+\beta,-q)\\ &=& e^{-q}\: {}_{1}F_{1}(-n,1+\beta,q)\\ &=& \mathbb{F}^{(\beta)}_q(n)
\end{eqnarray*}
where the interchange of the sum signs is justified by a classical Fubini argument, see \cite{Titchmarsh39}, $ {}_{1}F_{1}$ stands for the Kummer function and   the last sequence of identities follow from classical properties of the hypergeometric function, see e.g.~\cite{Koekoek2010}. Though the identity \eqref{eq:Jensen} is well-known, see e.g.~\cite[Proposition 2.1(ii)]{Craven1989}, the last relation can be easily deduce from  this latter as, for any $q,x>0$, $\Lambda \mathbb{F}^{(\beta)}_q(x)=\cont{d}_q\Jg(x)=\cont{d}_q\Lambda \mathbb{F}^{(\beta)}_1(x)=\Lambda \discret{D}_q\mathbb{F}^{(\beta)}_1(x)$ where for the last equality we used Lemma \ref{lem:dLamD}.
 \end{proof}

\subsection{Products of the intertwining kernels}
First, note that  the identities \eqref{b3} and \eqref{dd2} yield on $\esC_0(\RR_+)$
\bq
\Bg_{\beta}\Lambda\adjL  \ =\ \Lambda \dBg_{\beta} \adjL \ =\ \Lambda\adjL  \Bg_{\beta} \eq
and similarly
\bq
\fo t\geq 0,\qquad
\Bs^{(\beta)}_t\Lambda\adjL &=&\Lambda\adjL \Bs_t^{(\beta)}\eq
(more generally, we can expect that $F(\Bg_{\beta})\Lambda\adjL =\Lambda\adjL F(\dBg_{\beta})$
for any measurable function $F\st (-\iy,0]\ri\RR$, via functional calculus and the appropriate inclusion of the domains). Thus it appears
that the operator $\Lambda\adjL \st \esL^2(\mu_{\beta})\ri\esL^2(\mu_{\beta})$ commutes with the whole semi-group
$\Bs^{(\beta)}$. One can go further and compute $\Lambda\adjL $ as follows.
\begin{pro}\label{LL*}
We have
\bq
\Lambda\adjL &=&\Bs^{(\beta)}_1.
\eq
\end{pro}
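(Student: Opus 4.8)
The plan is to compute $\Lambda\adjL$ explicitly as an integral kernel on $\RR_+$ and to recognize the result as the transition kernel of $\cont{Q}^{(\beta)}_1$ with respect to $\mu_\beta$. For $f\in\esL^2(\mu_\beta)$ and $x\geq 0$, by Lemma \ref{lem:tildeL} and the definition \eqref{Lambda} of $\Lambda$,
\bq
\Lambda\adjL f(x)&=&\EE\left[\adjL f({\rm{Pois}}(x))\right]
\ =\ \sum_{n\in\ZZ_+}\frac{x^n}{n!}e^{-x}\int_0^\iy f(y)\,\frac{y^{n+\beta-1}}{\Gamma(n+\beta)}e^{-y}\,dy\\
&=&\int_0^\iy f(y)\,e^{-x-y}y^{\beta-1}\sum_{n\in\ZZ_+}\frac{(xy)^n}{n!\,\Gamma(n+\beta)}\,dy,
\eq
where the interchange of sum and integral is justified by the rapid decay of $1/(n!\,\Gamma(n+\beta))$ together with Tonelli's theorem (for $f\geq0$), then extended to general $f\in\esL^2(\mu_\beta)$ by boundedness of $\Lambda$ and $\adjL$ and density of $\esFf(\ZZ_+)$. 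Recalling the normalized Bessel function $\Jg$ from Subsection \ref{sec:cons-bes}, one has $\sum_{n\in\ZZ_+}\frac{(xy)^n}{n!\,\Gamma(n+\beta)}=\frac{1}{\Gamma(\beta)}(xy)^{(1-\beta)/2}I_{\beta-1}(2\sqrt{xy})$, the modified Bessel function of the first kind, so that
\bq
\Lambda\adjL f(x)&=&\int_0^\iy f(y)\,\frac{e^{-x-y}}{\Gamma(\beta)}\left(\frac{y}{x}\right)^{(\beta-1)/2} I_{\beta-1}(2\sqrt{xy})\,y^{\beta-1}\,dy.
\eq

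Next I would compare this with the known transition kernel of the squared Bessel semi-group. From \cite[Chap.~XI]{MR1725357}, or equivalently from the Laplace transform $\cont{Q}^{(\beta)}_t\fe_{-\lambda}(x)=(1+\lambda t)^{-\beta}e^{-x\lambda/(1+\lambda t)}$ recorded in the Feller-case proof above, the density of $\cont{Q}^{(\beta)}_1$ with respect to Lebesgue measure is $q_1(x,y)=e^{-x-y}(y/x)^{(\beta-1)/2}I_{\beta-1}(2\sqrt{xy})$ for $x>0$ (with the appropriate degenerate form at $x=0$, where it becomes $y^{\beta-1}e^{-y}/\Gamma(\beta)$, i.e.\ the gamma density $\nu_\beta$). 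Since $\mu_\beta(dy)=y^{\beta-1}dy/\Gamma(\beta)$, the displayed kernel of $\Lambda\adjL$ is exactly $q_1(x,y)\,\mu_\beta(dy)$, i.e.\ $\Lambda\adjL f(x)=\int_0^\iy f(y)\,q_1(x,y)\,\mu_\beta(dy)=\cont{Q}^{(\beta)}_1 f(x)$. A quick cross-check at $x=0$: only the $n=0$ term survives in the sum, giving $\Lambda\adjL f(0)=\int_0^\iy f(y)\,y^{\beta-1}e^{-y}\,dy/\Gamma(\beta)=\nu_\beta f=\cont{Q}^{(\beta)}_1 f(0)$, consistent with the Laplace-transform formula at $x=0$.

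Alternatively, and perhaps more cleanly, one can avoid Bessel-function identities entirely by testing against the dense family $\{\fe_{-\lambda}\}_{\lambda\geq0}$: using $\adjL\fe_{-\lambda}(n)=\EE[e^{-\lambda{\rm{Gam}}(n+\beta)}]=(1+\lambda)^{-n-\beta}$ and then $\Lambda$ of the geometric-type sequence $n\mapsto(1+\lambda)^{-n}$, one gets $\Lambda\adjL\fe_{-\lambda}(x)=(1+\lambda)^{-\beta}\exp(-x(1-(1+\lambda)^{-1}))=(1+\lambda)^{-\beta}e^{-x\lambda/(1+\lambda)}$, which is precisely $\cont{Q}^{(\beta)}_1\fe_{-\lambda}(x)$ by the Laplace-transform formula at $t=1$. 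Since the linear span of $\{\fe_{-\lambda}:\lambda\geq0\}$ is dense in $\Cp$ and both $\Lambda\adjL$ and $\cont{Q}^{(\beta)}_1$ are bounded on the relevant spaces, the identity $\Lambda\adjL=\cont{Q}^{(\beta)}_1$ follows on $\esC_0(\RR_+)$, and then on $\esL^2(\mu_\beta)$ by density. The main obstacle is purely bookkeeping: justifying the Fubini/Tonelli interchange and being careful that the claimed identity is asserted on the right function space (here $\esL^2(\mu_\beta)$, consistent with the domains of $\Lambda$ and $\adjL$ from Lemmas \ref{lem:lambda_bounded} and \ref{lem:tildeL}); the analytic content is essentially the classical Laplace-transform identity for squared Bessel processes already cited.
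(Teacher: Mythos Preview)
Your first approach is precisely the paper's: compute the kernel of $\Lambda\adjL$, recognize the modified Bessel series, and identify it with the Revuz--Yor transition density of the squared Bessel process at time $1$, treating $x=0$ separately. However, your bookkeeping has slipped in two places. The correct series identity is
\[
\sum_{n\in\ZZ_+}\frac{(xy)^n}{n!\,\Gamma(n+\beta)}=(xy)^{(1-\beta)/2}I_{\beta-1}(2\sqrt{xy}),
\]
with no $1/\Gamma(\beta)$ factor; substituting this into your previous line gives
\[
\Lambda\adjL f(x)=\int_0^\iy f(y)\,e^{-x-y}\Big(\frac{y}{x}\Big)^{(\beta-1)/2} I_{\beta-1}(2\sqrt{xy})\,dy=\int_0^\iy f(y)\,q_1(x,y)\,dy,
\]
i.e.\ the kernel with respect to \emph{Lebesgue} measure, not $\mu_\beta$. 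Your displayed formula carries an extra factor $y^{\beta-1}/\Gamma(\beta)$, and your final equation $\int f(y)\,q_1(x,y)\,\mu_\beta(dy)=\Bs^{(\beta)}_1 f(x)$ is false as written, since $q_1$ is the Lebesgue density. Once these slips are fixed, the argument coincides verbatim with the paper's proof.

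Your second approach, via $\adjL\fe_{-\lambda}(n)=(1+\lambda)^{-n-\beta}$ and then $\Lambda$ applied to this geometric sequence, is correct and is a genuinely different, more elementary route than the paper's. It avoids any Bessel-function identity and any appeal to the explicit Revuz--Yor density, relying instead only on the Laplace-transform formula $\Bs^{(\beta)}_t\fe_{-\lambda}(x)=(1+\lambda t)^{-\beta}e^{-x\lambda/(1+\lambda t)}$ already used in the Feller-case proof. The trade-off is that the paper's kernel computation is self-contained once one knows the Bessel density, whereas your Laplace-transform argument needs a density/closure step (span of $\{\fe_{-\lambda}:\lambda>0\}$ dense in $\esC_0(\RR_+)$, then transfer to $\esL^2(\mu_\beta)$); both are routine.
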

This formula may look strange at first view since $\beta$ does not appear explicitly in the left-hand side,
but $\beta$ is hidden in the definition of $\adjL $, which depends on the spaces $\esL^2(\mu_{\beta})$ and $\esLd^2(\fm_{\beta})$.
\proof
Consider a non-negative and measurable mapping $f\st \RR_+\ri \RR_+$. By definition, we have for any $x>0$,
\bq
\Lambda\adjL  f(x)&=&
\sum_{n\in\ZZ_+} \adjL f(n) \frac{x^n}{n!}e^{-x}\\
&=&\sum_{n\in\ZZ_+} \int_0^{\iy}f(y)\frac{y^{n+\beta-1}}{\Gamma(n+\beta)}e^{-y}\, dy \frac{x^n}{n!}e^{-x}
\\
&=&
\int_0^{\iy}f(y)\lt(\frac{y}{x}\rt)^{(\beta-1)/2}e^{-y-x}\sum_{n\in\ZZ_+} \frac{(xy)^{n+(\beta-1)/2}}{\Gamma(n+\beta)n!}\, dy.
\eq
We recognize that
\bq
\sum_{n\in\ZZ_+}  \frac{(xy)^{n+(\beta-1)/2}}{\Gamma(n+\beta)n!}&=&I_{\beta-1}(2\sqrt{xy})\eq
where $I_{\beta-1}$ is the modified Bessel function of the first kind of index $\beta-1$.
From Dufresne \cite{MR2181585} (take $t=1/2$ there due to our time scaling, see also Corollary 1.4 of Chapter XI  of Revuz and Yor \cite{MR1725357},
but a factor $1/t$ is missing in their formula), we get that  the measure, on $\RR_+$, $
\lt(\frac{y}{x}\rt)^{(\beta-1)/2}e^{-y-x}I_{\beta-1}(2\sqrt{xy})\un_{\RR_+}(y) dy$ is the law of $X^{(\beta)}_1$ under $\PP_x$,
namely we have, for any $x>0$,
\bq
\Lambda\adjL f(x)&=&\Bs^{(\beta)}_1 f (x).\eq
This relation is also true for $x=0$. Indeed the Poisson law of parameter 0 is just the Dirac mass in 0, so that
\bq
\Lambda\adjL f(0)&=& \adjL f(0)=\int_0^{\iy}f(y)\frac{y^{\beta-1}}{\Gamma(\beta)}e^{-y}\, dy \eq
and according to Corollary 1.4 of Chapter XI  of Revuz and Yor \cite{MR1725357}, the measure
$\frac{y^{\beta-1}}{\Gamma(\beta)}e^{-y}\, dy$ is the entrance law at time $1$ of the Bessel process of dimension $2\beta$ starting from $0$.
\par
\noindent In summary, we have proven the commutative diagram displayed in Figure \ref{fig1} and valid for any $\beta>0$ and $t>0$.
\begin{figure}[!h]\centering
\begin{tikzcd}[scale=2]
\esL^2(\mu_\beta) \arrow[r, "\Bs^{(\beta)}_t"] \arrow[d, "\Lambda"'] \arrow[dd, bend right=50 , "\Bs_1^{(\beta)}"']
& \esL^2(\mu_\beta) \arrow[d, "\Lambda" ]  \arrow[dd, bend left=50 , "\Bs_1^{(\beta)}"]\\
\esLd^2(\fm_{\beta}) \arrow[r, "\dBs^{(\beta)}_t" ]\arrow[d, "\adjL "']
& \esLd^2(\fm_{\beta}) \arrow[d, "\adjL " ]\\
\esL^2(\mu_\beta) \arrow[r, "\Bs^{(\beta)}_t"]
& \esL^2(\mu_\beta)
\end{tikzcd}
\caption{Intertwining relations with $\Lambda\adjL =\Bs^{(\beta)}_1$}\label{fig1}
\end{figure}

In view of Proposition \ref{LL*}, it is natural to wonder what is $\adjL \Lambda$. 
\begin{pro}\label{L*L}
We have
\bq
 \adjL \Lambda&=&\dBs^{(\beta)}_1
\eq
and it follows that
\bq
\fo m,n\in\ZZ_+,\qquad
\dBs^{(\beta)}_1(n,m)&=&2^{-(m+n+\beta)} \frac{(m+n+\beta-1)(m+n+\beta-2)\cdots (n+\beta)}{m!}.
\eq
\end{pro}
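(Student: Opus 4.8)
The plan is to establish $\adjL\Lambda=\dBs^{(\beta)}_1$ by mimicking the argument used for Proposition~\ref{LL*}, and then to read off the explicit kernel by a direct computation. First I would recall that the intertwining relations \eqref{b3} and \eqref{dd2} give, on $\esLd^2(\fm_\beta)$, the chain $\dBg_\beta\adjL\Lambda=\adjL\Bg_\beta\Lambda=\adjL\Lambda\dBg_\beta$, so that $\adjL\Lambda$ commutes with the whole semi-group $\dBs^{(\beta)}$ (indeed with $F(\dBg_\beta)$ for any reasonable $F$, via functional calculus and the inclusion of domains proven in Lemmas~\ref{lem:lambda_range} and \ref{lem:G_L2}). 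This already makes the identification with $\dBs^{(\beta)}_1$ plausible, but as in Proposition~\ref{LL*} the clean way to finish is to simply compute $\adjL\Lambda$ as a kernel and recognise the answer.

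For the explicit computation I would apply $\adjL\Lambda$ to an indicator $\un_{\{m\}}$ and use the formulas of Lemma~\ref{lem:tildeL} and \eqref{Lambda}: for $f$ non-negative,
\bq
\adjL\Lambda f(n)&=&\int_0^\iy \Lambda f(y)\,\frac{y^{n+\beta-1}}{\Gamma(n+\beta)}e^{-y}\,dy
\ =\ \int_0^\iy \lt(\sum_{m\in\ZZ_+} f(m)\frac{y^m}{m!}e^{-y}\rt)\frac{y^{n+\beta-1}}{\Gamma(n+\beta)}e^{-y}\,dy\\
&=&\sum_{m\in\ZZ_+}\frac{f(m)}{m!\,\Gamma(n+\beta)}\int_0^\iy y^{m+n+\beta-1}e^{-2y}\,dy
\ =\ \sum_{m\in\ZZ_+}f(m)\,\frac{\Gamma(m+n+\beta)}{m!\,\Gamma(n+\beta)}\,2^{-(m+n+\beta)},
\eq
where the interchange of sum and integral is justified by non-negativity (Tonelli). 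This gives precisely
\bq
\dBs^{(\beta)}_1(n,m)&=&2^{-(m+n+\beta)}\,\frac{(m+n+\beta-1)(m+n+\beta-2)\cdots(n+\beta)}{m!},
\eq
matching the stated formula. It remains to argue that this kernel really is $\dBs^{(\beta)}_1=\exp(\dBg_\beta)$ and not merely some operator with the right commutation property: here I would invoke the analogue of the argument in Proposition~\ref{LL*}, namely that the law of $\cont{X}^{(\beta)}_1$ under $\PP_x$ is carried by $\Lambda$-composition to the law of $\dBs^{(\beta)}_1(n,\cdot)$, using the gateway relation $\Bs^{(\beta)}_1\Lambda=\Lambda\dBs^{(\beta)}_1$ from Theorem~\ref{thm:main} together with $\Lambda\adjL=\Bs^{(\beta)}_1$ from Proposition~\ref{LL*}; applying $\Lambda$ on the left to $\adjL\Lambda$ gives $\Lambda\adjL\Lambda=\Bs^{(\beta)}_1\Lambda=\Lambda\dBs^{(\beta)}_1$, and injectivity of $\Lambda$ (Lemma~\ref{Linj}) forces $\adjL\Lambda=\dBs^{(\beta)}_1$.

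The main obstacle I anticipate is the last identification step: one must be slightly careful that the injectivity of $\Lambda$ applies in the relevant space (it is stated on $\esc_0(\ZZ_+)$ and on $\esLd^2(\fm_\beta)$ in Lemma~\ref{Linj}), and that $\adjL\Lambda$ maps into that space so the cancellation $\Lambda(\adjL\Lambda-\dBs^{(\beta)}_1)=0$ is legitimate — this uses that both $\adjL\Lambda$ and $\dBs^{(\beta)}_1$ are bounded on $\esLd^2(\fm_\beta)$, which follows from Lemmas~\ref{lem:lambda_bounded}, \ref{lem:tildeL} and the self-adjoint functional calculus for $\dBg_\beta$. Alternatively, and perhaps more cleanly, once the kernel $2^{-(m+n+\beta)}\Gamma(m+n+\beta)/(m!\,\Gamma(n+\beta))$ is in hand one can verify directly that it solves the Kolmogorov equation $\pa_t \dBs^{(\beta)}_t(n,m)=(\dBg_\beta \dBs^{(\beta)}_t)(n,m)$ at $t=1$ is consistent with the semigroup property $\adjL\Lambda\cdot\adjL\Lambda=\adjL\Bs^{(\beta)}_1\Lambda=\adjL\Lambda\dBs^{(\beta)}_1$, but the injectivity route is shorter and I would present that one. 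The explicit beta-negative-binomial shape of the kernel (it is, up to normalisation, the transition probabilities of the birth-death chain run for unit time) can then be remarked upon as a discrete analogue of the noncentral result used in Proposition~\ref{LL*}.
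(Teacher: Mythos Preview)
Your proposal is correct and follows essentially the same approach as the paper: the identification $\adjL\Lambda=\dBs^{(\beta)}_1$ is obtained exactly via $\Lambda(\adjL\Lambda)=\Bs^{(\beta)}_1\Lambda=\Lambda\dBs^{(\beta)}_1$ together with the injectivity of $\Lambda$ from Lemma~\ref{Linj}, and the explicit kernel is computed by the same gamma integral. The only cosmetic difference is that the paper first establishes the operator identity and then reads off the kernel, whereas you compute the kernel first and identify afterwards.
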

\proof
Denote $R\df \adjL \Lambda$. From Proposition \ref{LL*}, we get
that $\Lambda R=\Lambda \adjL \Lambda =\Bs_1^{(\beta)}\Lambda=\Lambda \dBs_1^{(\beta)}$, namely
\bq
\Lambda (R-\dBs^{(\beta)}_1)&=&0.\eq
Lemma \ref{Linj} implies that $R=\dBs^{(\beta)}_1$. As a consequence, for any non-negative measurable function $f$ and $n\in\ZZ_+$,
we have
\bq
\dBs^{(\beta)}_1 f(n)&=&
\adjL \Lambda f(n)\\&=&
\frac1{\Gamma(n+\beta)}\int_0^{\iy} \Lambda f (x)\, x^{n+\beta-1}e^{-x}dx\\
&=&\frac1{\Gamma(n+\beta)}\int_0^{\iy} \sum_{l\in\ZZ_+}\frac{f(l)}{l!}x^l e^{-x}\, x^{n+\beta-1}e^{-x}dx\\
&=&\sum_{k\in\ZZ_+}\frac{f(k)}{k!\Gamma(n+\beta)}\int_0^{\iy} x^{k+n+\beta-1}e^{-2x}\,dx\\
&=&\sum_{k\in\ZZ_+}\frac{f(k)}{k!\Gamma(n+\beta)}\frac{\Gamma(k+n+\beta)}{2^{k+n+\beta}}\\
&=&\sum_{k\in\ZZ_+} f(k)2^{-(k+n+\beta)} \frac{(k+n+\beta-1)(k+n+\beta-2)\cdots (n+\beta)}{k!}\eq
and we end up with the announced result by replacing $f$ by the indicator function of $m\in\ZZ_+$.
\wwtbp

We deduce the  commutative diagram displayed in Figure \ref{fig2} which is valid for any $\beta>0$ and $t>0$ and is analogous to Figure \ref{fig1}.
\begin{figure}[!h]\centering
\begin{tikzcd}[scale=2]
\esLd^2(\fm_{\beta}) \arrow[r, "\dBs^{(\beta)}_t"] \arrow[d, "\adjL "'] \arrow[dd, bend right=50 , "\dBs_1^{(\beta)}"']
& \esLd^2(\fm_{\beta}) \arrow[d, "\adjL " ]  \arrow[dd, bend left=50 , "\dBs_1^{(\beta)}"]\\
\esL^2(\mu_\beta) \arrow[r, "\Bs^{(\beta)}_t" ]\arrow[d, "\Lambda"']
& \esL^2(\mu_\beta) \arrow[d, "\Lambda" ]\\
\esLd^2(\fm_{\beta}) \arrow[r, "\dBs^{(\beta)}_t"]
& \esLd^2(\fm_{\beta})
\end{tikzcd}
\caption{Intertwining relations with $\adjL \Lambda=\dBs^{(\beta)}_1$}\label{fig2}
\end{figure}
\par
In view of Propositions \ref{LL*} and \ref{L*L}, one can be left wondering about the role of the time 1 in $\Bs^{(\beta)}_1$ and $\dBs^{(\beta)}_1$.
Let us show how it is possible to replace 1 by any time $t>0$, by taking into account the scaling property of the Bessel processes $X^{(\beta)}\df (X_t^{(\beta)})_{t\geq 0}$.
More precisely, for any $\sigma>0$ and $x\geq 0$, the law of $(X^{(\beta)}_{\sigma t})_{t\geq 0}$ starting from $\sigma x$ is equal to the law of
$(\sigma X^{(\beta)}_t)_{t\geq 0}$, where $X^{(\beta)}$ is starting from $x$.
At the level of the semi-group $\Bs^{(\beta)}$, we recall via \eqref{Pb} that
\bqn{dcP}
\fo \sigma>0,\,t\geq 0,\qquad
d_\sigma\Bs_{\sigma t}^{(\beta)}&=&\Bs_t^{(\beta)}d_\sigma\eqn
where $d_\sigma$ is the dilatation operator acting on
$\esL^2(\mu_{\beta})$ via
\bq
 d_\sigma f (x)&=&f(\sigma x).\eq
Formula \eqref{dcP} holds on $\esL^2(\mu_{\beta})$, since the operator $\sigma^{b/2}d_\sigma$ is an isometry of $\esL^2(\mu_{\beta})$. Indeed performing a change of variables formula, it appears that for $f\in
\esL^2(\mu_{\beta})$,
\bq
\int_0^{\iy} (d_\sigma f(x))^2\,\mu_{\beta}(dx)&=&\int_0^{\iy} f^2(\sigma x)\frac{x^{\beta-1}}{\Gamma(\beta)}\,dx
=\frac{1}{\sigma^{\beta}}\int_0^{\iy} f^2(x)\frac{x^{\beta-1}}{\Gamma(\beta)}\,dx\\ &=&\sigma^{-\beta}\int_0^{\iy} f^2(x)\,\mu_{\beta}(dx).\eq
From this isometry property, we deduce that $(\sigma^{\beta/2} d_\sigma)^*=(\sigma^{\beta/2}d_\sigma)^{-1}$,
i.e.
\bqn{dc*}
\cont{d}^*_{\sigma}\ =\ \sigma^{-\beta/2}(\sigma^{\beta/2}d_\sigma)^{-1}\ =\  \sigma^{-\beta}\cont{d}_{1/\sigma}.\eqn
Formula \eqref{dcP} can also be interpreted as a composition of Markov kernels,
by seeing $d_\sigma$ as the transition kernel
\bq
\fo x,x'\in \RR_+,\qquad d_\sigma(x,dx')&=&\delta_{\sigma x}(dx').\eq
This is an instance where the dual of a Markov process is not Markovian since according to \eqref{dc*} the weight of $d_\sigma^*$ is $\sigma^{-\beta}$. Next, define
\bq
\Lambda_{\sigma}&\df& d_\sigma \Lambda\eq
which is a Markov kernel from $\RR_+$ to $\ZZ_+$.
Due to the above observation, $\Lambda_\sigma^*=\Lambda^* d_\sigma^*$ is not a Markov kernel, so  consider instead the  Markov kernel given by
\bq
\wi \Lambda _{\sigma}&\df& \sigma^{\beta}\Lambda_\sigma^*\ =\ \Lambda^*  \cont{d}_{1/\sigma}.\eq
Here is the analogue for the first column of Figure \ref{fig1}.
\begin{lem}\label{LcLc1}
For any $\sigma>0$, we have $\Lambda_{\sigma}\wi\Lambda_{\sigma}= \Bs_{1/\sigma}^{(\beta)}$.
\end{lem}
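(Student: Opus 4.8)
The plan is to reduce the identity $\Lambda_\sigma\wi\Lambda_\sigma = \Bs^{(\beta)}_{1/\sigma}$ to Proposition \ref{LL*}, namely $\Lambda\adjL = \Bs^{(\beta)}_1$, by inserting the definitions $\Lambda_\sigma = \cont{d}_\sigma\Lambda$ and $\wi\Lambda_\sigma = \adjL\cont{d}_{1/\sigma}$ and then transporting the dilation operators through $\Lambda\adjL$ using the $\cont{d}$-self-similarity of the squared Bessel semi-group, formula \eqref{dcP}. Concretely, one writes
\bq
\Lambda_\sigma\wi\Lambda_\sigma \ =\ \cont{d}_\sigma\Lambda\adjL\cont{d}_{1/\sigma}
\ =\ \cont{d}_\sigma\Bs^{(\beta)}_1\cont{d}_{1/\sigma}.
\eq
So the whole computation boils down to evaluating $\cont{d}_\sigma\Bs^{(\beta)}_1\cont{d}_{1/\sigma}$, and the claim is exactly that this equals $\Bs^{(\beta)}_{1/\sigma}$.

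The key step is therefore the identity $\cont{d}_\sigma\Bs^{(\beta)}_1\cont{d}_{1/\sigma}=\Bs^{(\beta)}_{1/\sigma}$, which I would extract directly from \eqref{dcP}. Taking $t=1/\sigma$ in $\cont{d}_\sigma\Bs^{(\beta)}_{\sigma t}=\Bs^{(\beta)}_t\cont{d}_\sigma$ gives $\cont{d}_\sigma\Bs^{(\beta)}_1=\Bs^{(\beta)}_{1/\sigma}\cont{d}_\sigma$; post-composing with $\cont{d}_{1/\sigma}$ and using $\cont{d}_\sigma\cont{d}_{1/\sigma}=\id$ on $\esL^2(\mu_\beta)$ yields the claim. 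One should note that all three operators $\cont{d}_\sigma$, $\Bs^{(\beta)}_1$, $\adjL$ act boundedly on $\esL^2(\mu_\beta)$ (and $\cont{d}_{1/\sigma}:\esL^2(\mu_\beta)\to\esL^2(\mu_\beta)$ is bounded up to the scalar $\sigma^{\beta/2}$ by the isometry computation preceding the statement), so the composition identity is legitimate as an identity of bounded operators on $\esL^2(\mu_\beta)$, and it also holds pointwise on bounded or non-negative functions by the probabilistic interpretation \eqref{Pb} together with the scaling of the Bessel process.

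The proof presents essentially no obstacle: the only subtlety worth a sentence is being careful about which Hilbert space the dilation $\cont{d}_{1/\sigma}$ is viewed in and the fact that $\wi\Lambda_\sigma=\sigma^\beta\Lambda_\sigma^*=\adjL\cont{d}_{1/\sigma}$ was already shown to be a genuine Markov kernel just before the statement, so $\Lambda_\sigma\wi\Lambda_\sigma$ is automatically a Markov kernel and the identification with $\Bs^{(\beta)}_{1/\sigma}$ is consistent. I would close by remarking that this is simply the $\sigma$-rescaled version of the first column of Figure \ref{fig1}, exactly as announced, and that an analogous statement for $\wi\Lambda_\sigma\Lambda_\sigma=\dBs^{(\beta)}_{1/\sigma}$ follows the same way from Proposition \ref{L*L} together with the discrete scaling relation \eqref{eq:self-besseld} and Proposition \ref{prop:comut-Dd}.
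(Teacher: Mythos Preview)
Your proof of the lemma itself is correct and follows exactly the same route as the paper: insert the definitions $\Lambda_\sigma=\cont{d}_\sigma\Lambda$ and $\wi\Lambda_\sigma=\adjL\cont{d}_{1/\sigma}$, apply Proposition~\ref{LL*} to get $\cont{d}_\sigma\Bs^{(\beta)}_1\cont{d}_{1/\sigma}$, and then use the scaling relation \eqref{dcP} with $t=1/\sigma$.

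One correction to your closing remark: the analogous statement for the reversed product is \emph{not} $\wi\Lambda_\sigma\Lambda_\sigma=\dBs^{(\beta)}_{1/\sigma}$ but rather $\wi\Lambda_\sigma\Lambda_\sigma=\dBs^{(\beta)}_{1}$ (Lemma~\ref{LcLc} in the paper), independent of $\sigma$. The reason is that $\wi\Lambda_\sigma\Lambda_\sigma=\adjL\cont{d}_{1/\sigma}\cont{d}_\sigma\Lambda=\adjL\Lambda=\dBs^{(\beta)}_1$ directly by Proposition~\ref{L*L}, with the dilations cancelling \emph{inside} rather than remaining on the outside; no discrete scaling is needed, and your proposed route via \eqref{eq:self-besseld} would give the wrong time.
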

\proof
We observe that
\bq
\Lambda_{\sigma}\wi\Lambda_{\sigma}&= & d_\sigma \Lambda\adjL  \cont{d}_{1/\sigma}=d_\sigma\Bs_1^{(\beta)}\cont{d}_{1/\sigma}=d_\sigma\cont{d}_{1/\sigma}\Bs^{(\beta)}_{1/\sigma}=\Bs^{(\beta)}_{1/\sigma}
\eq
where we have taken into account \eqref{dcP} with $t=1/\sigma$.
\wwtbp

The following result completes the analogue of Figure \ref{fig1}.
\begin{lem}\label{cinter}
We have for any $\beta,\sigma>0$ and $t\geq 0$,
\bq
\Bs_t^{(\beta)}\Lambda_{\sigma}&=&\Lambda_{\sigma} \dBs_{\sigma t}^{(\beta)} \textrm{ on $\esc_0(\ZZ_+) \cup \ell^2({\fm_{\beta}})$}\\
\dBs_{\sigma t}^{(\beta)}\wi\Lambda_{\sigma}&=&\wi\Lambda_{\sigma} \Bs_{t}^{(\beta)} \textrm{ on $\esC_0(\RR_+) \cup \esL^2(\mu_\beta)$}.
\eq
\end{lem}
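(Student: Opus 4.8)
The plan is to obtain both identities by composing three relations already established in the paper, with no new analytic input: the two gateway intertwinings of Theorem~\ref{thm:main} (the exponentiated forms of \eqref{b3} and \eqref{dd2}), the $\cont{d}$-self-similarity of the squared Bessel semi-group recorded in \eqref{dcP}, and the trivial factorizations $\Lambda_\sigma=\cont{d}_\sigma\Lambda$ and $\wi\Lambda_\sigma=\Lambda^*\cont{d}_{1/\sigma}$. The only point requiring a word of care is that each intermediate operator must act on the Banach or Hilbert space under consideration; this is guaranteed because $\cont{d}_\sigma$ (resp.\ $\cont{d}_{1/\sigma}$) maps $\esC_0(\RR_+)$ to itself and, up to the factor $\sigma^{\mp\beta/2}$, is an isometry of $\esL^2(\mu_\beta)$, see the discussion preceding \eqref{dc*}, while the gateway relations of Theorem~\ref{thm:main} are valid both in the Feller and in the $\esL^2$ settings.

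For the first identity I would simply write, on $\esc_0(\ZZ_+)$ and on $\ell^2(\fm_\beta)$,
\[
\Bs_t^{(\beta)}\Lambda_\sigma=\Bs_t^{(\beta)}\cont{d}_\sigma\Lambda=\cont{d}_\sigma\Bs_{\sigma t}^{(\beta)}\Lambda=\cont{d}_\sigma\Lambda\,\dBs_{\sigma t}^{(\beta)}=\Lambda_\sigma\dBs_{\sigma t}^{(\beta)},
\]
where the second equality is \eqref{dcP} and the third is the first gateway relation of Theorem~\ref{thm:main}; since \eqref{dcP} holds on $\esC_0(\RR_+)$ (via \eqref{Pb} and the scaling of the squared Bessel process) and on $\esL^2(\mu_\beta)$, and since the gateway holds on both $\esc_0(\ZZ_+)$ and $\ell^2(\fm_\beta)$, the whole chain is valid on the stated spaces.

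For the second identity I would argue symmetrically, using $\wi\Lambda_\sigma=\Lambda^*\cont{d}_{1/\sigma}$, the adjoint gateway relation $\dBs_{u}^{(\beta)}\Lambda^*=\Lambda^*\Bs_{u}^{(\beta)}$ of Theorem~\ref{thm:main} (valid on $\esC_0(\RR_+)$ and on $\esL^2(\mu_\beta)$), and \eqref{dcP} applied with $\sigma$ replaced by $1/\sigma$ and $t$ by $\sigma t$, which gives $\Bs_{\sigma t}^{(\beta)}\cont{d}_{1/\sigma}=\cont{d}_{1/\sigma}\Bs_t^{(\beta)}$:
\[
\dBs_{\sigma t}^{(\beta)}\wi\Lambda_\sigma=\dBs_{\sigma t}^{(\beta)}\Lambda^*\cont{d}_{1/\sigma}=\Lambda^*\Bs_{\sigma t}^{(\beta)}\cont{d}_{1/\sigma}=\Lambda^*\cont{d}_{1/\sigma}\Bs_t^{(\beta)}=\wi\Lambda_\sigma\Bs_t^{(\beta)}.
\]
On $\esL^2(\mu_\beta)$ this second relation can alternatively be read off directly as the Hilbert-space adjoint of the first one: taking adjoints of $\Bs_t^{(\beta)}\Lambda_\sigma=\Lambda_\sigma\dBs_{\sigma t}^{(\beta)}$, using the self-adjointness of $\Bs_t^{(\beta)}$ and $\dBs_{\sigma t}^{(\beta)}$ (Lemma~\ref{lem:G_L2} and functional calculus), $\Lambda^*=\adjL$, and $\cont{d}_\sigma^*=\sigma^{-\beta}\cont{d}_{1/\sigma}$ from \eqref{dc*}, the powers of $\sigma$ cancel and one recovers $\dBs_{\sigma t}^{(\beta)}\wi\Lambda_\sigma=\wi\Lambda_\sigma\Bs_t^{(\beta)}$.

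I do not expect a genuine obstacle here: the verification is essentially mechanical, and the one thing worth a sentence of justification is bookkeeping of the function spaces — namely that the gateway identities and \eqref{dcP} are available in both the Feller ($\esc_0(\ZZ_+)$, $\esC_0(\RR_+)$) and the Hilbertian ($\ell^2(\fm_\beta)$, $\esL^2(\mu_\beta)$) frameworks, and that the dilation operators are bounded operators on the relevant spaces so that the successive compositions are legitimate.
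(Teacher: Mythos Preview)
Your proof is correct and follows essentially the same route as the paper: the first identity is obtained by the very chain $\Bs_t^{(\beta)}\Lambda_\sigma=\Bs_t^{(\beta)}\cont{d}_\sigma\Lambda=\cont{d}_\sigma\Bs_{\sigma t}^{(\beta)}\Lambda=\cont{d}_\sigma\Lambda\,\dBs_{\sigma t}^{(\beta)}=\Lambda_\sigma\dBs_{\sigma t}^{(\beta)}$, and the second by the analogous chain using $\wi\Lambda_\sigma=\Lambda^*\cont{d}_{1/\sigma}$ (the paper carries the equivalent factor $\sigma^\beta\Lambda_\sigma^*$, but this is cosmetic). Your additional remark that the second identity can alternatively be read off as the Hilbert-space adjoint of the first is a nice complement not spelled out in the paper.
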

\proof
First, we observe that
\bq
\Bs_t^{(\beta)}\Lambda_{\sigma}&=&\Bs_t^{(\beta)} d_\sigma \Lambda=d_\sigma \Bs_{\sigma t}^{(\beta)} \Lambda=d_\sigma \Lambda \dBs_{\sigma t}^{(\beta)} =\Lambda_{\sigma} \dBs_{\sigma t}^{(\beta)}\eq
where we used the scaling property \eqref{eq:self-bessel_rec} and the main gateway relationship \eqref{eq:gatewayLag} which both hold  on $\esc_0(\ZZ_+)$. The extension to  $\ell^2({\fm_{\beta}})$  is obtained by a standard density argument. On the other hand, on $\esC_0(\RR_+)$,
\bq
\dBs_{\sigma t}^{(\beta)}\wi\Lambda_{\sigma}&=&\sigma^b\dBs_{\sigma t}^{(\beta)}\adjL  \cont{d}_{1/\sigma}=\sigma^b\adjL \Bs_{\sigma t}^{(\beta)} \cont{d}_{1/\sigma}=\sigma^b\adjL \cont{d}_{1/\sigma} \Bs_{t}^{(\beta)}=\wi\Lambda_{\sigma} \Bs_{t}^{(\beta)}
\eq where we used
\eqref{dcP} in the third equality, which by resorting, again, to a density argument completes the proof.
\par
\wwtbp
To get the analogue of Figure \ref{fig2}, it is sufficient to adapt the first relation of
Proposition \ref{L*L}.
\begin{lem}\label{LcLc}
For any $\beta,\sigma>0$, we have
\bq\wi\Lambda_{\sigma}\Lambda_{\sigma}&=&\dBs^{(\beta)}_{1}.\eq
\end{lem}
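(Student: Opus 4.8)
The plan is to mimic exactly the argument already used for Proposition~\ref{L*L}, which established $\adjL\Lambda=\dBs^{(\beta)}_1$, but now carrying the dilation factor $\sigma$ along. Recall that by definition $\Lambda_\sigma=\cont{d}_\sigma\Lambda$ and $\wi\Lambda_\sigma=\sigma^\beta\Lambda_\sigma^*=\adjL\cont{d}_{1/\sigma}$. The key structural input is Lemma~\ref{LcLc1}, which gives $\Lambda_\sigma\wi\Lambda_\sigma=\Bs^{(\beta)}_{1/\sigma}$, and the injectivity of $\Lambda$ from Lemma~\ref{Linj}.

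First I would set $R\df \wi\Lambda_\sigma\Lambda_\sigma$, an operator on $\esLd^2(\fm_\beta)$ (or on $\esc_0(\ZZ_+)$), and compute $\Lambda_\sigma R$. Using associativity and Lemma~\ref{LcLc1},
\bq
\Lambda_\sigma R&=&\Lambda_\sigma\wi\Lambda_\sigma\Lambda_\sigma=\Bs^{(\beta)}_{1/\sigma}\Lambda_\sigma.
\eq
On the other hand, the first intertwining of Lemma~\ref{cinter}, namely $\Bs^{(\beta)}_t\Lambda_\sigma=\Lambda_\sigma\dBs^{(\beta)}_{\sigma t}$, applied with $t=1/\sigma$, yields $\Bs^{(\beta)}_{1/\sigma}\Lambda_\sigma=\Lambda_\sigma\dBs^{(\beta)}_{1}$. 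Hence $\Lambda_\sigma(R-\dBs^{(\beta)}_1)=0$, i.e.\ $\cont{d}_\sigma\Lambda(R-\dBs^{(\beta)}_1)=0$. Since $\cont{d}_\sigma$ is injective (it is an isomorphism, up to the scalar $\sigma^{\beta/2}$ it is even an isometry of $\esL^2(\mu_\beta)$, as recorded before \eqref{dc*}), this forces $\Lambda(R-\dBs^{(\beta)}_1)=0$, and then Lemma~\ref{Linj} gives $R=\dBs^{(\beta)}_1$, which is the claim.

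I do not expect any genuine obstacle here: the statement is a formal consequence of results already in hand, and the only mild care needed is to make sure all the operator identities are invoked on a common domain where each relation is valid (e.g.\ $\esc_0(\ZZ_+)$, then extend by density to $\esLd^2(\fm_\beta)$ exactly as in Lemma~\ref{cinter}). If a more computational proof were preferred, one could alternatively just evaluate $\wi\Lambda_\sigma\Lambda_\sigma$ directly: $\wi\Lambda_\sigma\Lambda_\sigma f(n)=\adjL\cont{d}_{1/\sigma}\cont{d}_\sigma\Lambda f(n)=\adjL\Lambda f(n)=\dBs^{(\beta)}_1 f(n)$ by Proposition~\ref{L*L}, since $\cont{d}_{1/\sigma}\cont{d}_\sigma=\id$. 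This one-line argument is in fact the cleanest route and makes the $\sigma$-independence of the answer transparent; I would present it as the proof and mention the diagram-chasing version as a remark.
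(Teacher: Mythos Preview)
Your diagram-chasing argument is exactly the paper's proof: set $R_\sigma\df\wi\Lambda_\sigma\Lambda_\sigma$, use $\Lambda_\sigma R_\sigma=\Lambda_\sigma\wi\Lambda_\sigma\Lambda_\sigma=\Bs^{(\beta)}_{1/\sigma}\Lambda_\sigma=\Lambda_\sigma\dBs^{(\beta)}_1$, and conclude via the injectivity of $d_\sigma$ (isometry up to scalar) and of $\Lambda$ (Lemma~\ref{Linj}). Your alternative one-line computation $\wi\Lambda_\sigma\Lambda_\sigma=\adjL\, d_{1/\sigma}\, d_\sigma\,\Lambda=\adjL\Lambda=\dBs^{(\beta)}_1$ is in fact cleaner than what the paper does and makes the $\sigma$-independence immediate; the paper does not take this shortcut.
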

\proof
Set $R_{\sigma}\df \wi\Lambda_{\sigma}\Lambda_{\sigma}$, then we have
$\Lambda_{\sigma}R_{\sigma}=\Lambda_{\sigma}\wi\Lambda_{\sigma}\Lambda_{\sigma}=\Bs_{1/\sigma}^{(\beta)}\Lambda_{\sigma}=\Lambda_{\sigma} \dBs_{1}^{(\beta)}$, namely
\bq
d_\sigma\Lambda(R_{\sigma}-\dBs_{1}^{(\beta)})&=&0.\eq
We get the announced result, since $\sigma^{\beta/2}d_\sigma$ is an isometry and $\Lambda$ is injective, see Lemma \ref{lem:lambda_bounded}.
\wwtbp

To sum up, we have proven the commutative diagram of Figure \ref{fig3} valid  for any $\beta,t,s>0$ (by taking $\sigma=1/s$ in the above considerations).
\begin{figure}[!h]\centering
\begin{tikzcd}[scale=2]
\esL^2(\mu_\beta) \arrow[r, "\Bs^{(\beta)}_t"] \arrow[d,color=red, "\Lambda_{1/s}"] \arrow[dd, bend right=50 , color=blue, "\Bs_s^{(\beta)}"']
& \esL^2(\mu_\beta) \arrow[d, "\Lambda_{1/s}"' ]  \arrow[dd, bend left=50 , "\Bs_s^{(\beta)}"]\\
\textcolor{red}{\esLd^2(\fm_{\beta})} \arrow[r, color=red,"\dBs^{(\beta)}_{t/s}" ]\arrow[d,  "\wi\Lambda_{1/s}"]\arrow[dd, bend right=50 , "\dBs_1^{(\beta)}"']
&\textcolor{red}{\esLd^2(\fm_{\beta})} \arrow[d, color=red,"\wi\Lambda_{1/s}"' ]\arrow[dd, bend left=50 , "\dBs_1^{(\beta)}"]\\
\textcolor{blue}{\esL^2(\mu_\beta)} \arrow[r, color=blue, "\Bs^{(\beta)}_t"] \arrow[d, "\Lambda_{1/s}" ]
& \esL^2(\mu_\beta) \arrow[d, "\Lambda_{1/s}"' ]\\
\esLd^2(\fm_{\beta}) \arrow[r, "\dBs^{(\beta)}_{t/s}"]
& \esLd^2(\fm_{\beta})
\end{tikzcd}
\caption{Intertwining relations with $\Lambda_{1/s}\wi\Lambda_{1/s}=\Bs^{(\beta)}_s$}\label{fig3}
\end{figure}
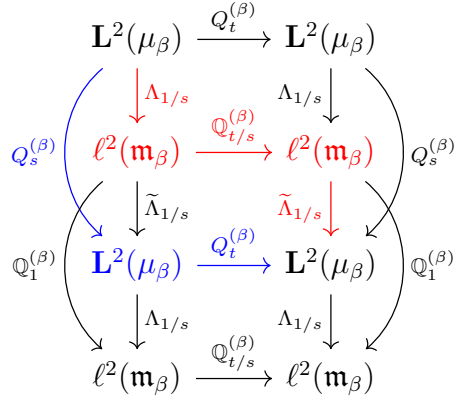
\par\me

\subsection{Exact simulation of Bessel processes}
For given $x\in\RR_+$ and $t>0$, assume that we want to sample $X^{(\beta)}_t$ under $\PP_x$.
There are two traditional ways to do it:
\begin{itemize}
\item Solve the stochastic differential equation  \eqref{Bessel}. In practice it can be done via Euler schemes, preferentially implicit ones, due to the fact that the diffusion term
$\sqrt{2X^{(\beta)}_t}$ can be quite big and to avoid that the approximation crosses 0.
\item Use the formula giving the density of law of $X^{(\beta)}_t$ under $\PP_x$, see Corollary 1.4 of Chapter XI  of Revuz and Yor \cite{MR1725357}.
Note that the Bessel function of index $\beta-1$ enters in this formula.
\end{itemize}
So both these solutions require some approximations and do not provide an exact sampling.
\par\sm
Let us show how the intertwining relations \eqref{b3} and \eqref{dd2} can be used to provide a simple exact sampling of $X^{(\beta)}_t$  under $\PP_x$.
We will first do it for $t\geq 1$, but due to the scaling property of $X^{(\beta)}$, the construction will next be extended to any $t>0$.
\par
\sm
Let $\dBs^{(\beta)}\df(\dBs^{(\beta)}_t)_{t\geq 0}$ be the Markov semi-group generated by $\dBg_{\beta}$. 
It is very simple to simulate an associated birth and death process $\cX^{(\beta)}\df(\cX_t^{(\beta)})_{t\geq 0}$:
first choose $\cX_0^{(\beta)}$ according to a given initial distribution.
Then sample an independent exponential time $\tau_1$ of parameter $\vert \dBg_{\beta}(\cX^{(\beta)}_0,\cX^{(\beta)}_0)\vert$.
For $t\in[0,\tau_1)$, we take $\cX^{(\beta)}_t\df \cX^{(\beta)}_0$.
Choose $\cX^{(\beta)}_{\tau_1}$ according to the probability $\dBg_{\beta}(\cX^{(\beta)}_0,\cdot)/\vert \dBg_{\beta}(\cX^{(\beta)}_0,\cX^{(\beta)}_0)\vert$ on $\{\cX^{(\beta)}_0-1, \cX^{(\beta)}_0+1\}$.
Next the same procedure starts again: sample an independent exponential time $\tau_2$ of parameter $\vert \dBg_{\beta}(\cX^{(\beta)}_{\tau_1},\cX^{(\beta)}_{\tau_1})\vert$
and take $\cX^{(\beta)}_t\df \cX^{(\beta)}_{\tau_1}$ for $t\in[\tau_1,\tau_1+\tau_2)$. Choose $\cX^{(\beta)}_{\tau_1+\tau_2}$ according to the probability $\dBg_{\beta}(\cX^{(\beta)}_{\tau_1},\cdot)/\vert \dBg_{\beta}(\cX^{(\beta)}_{\tau_1},\cX^{(\beta)}_{\tau_1})\vert$ on $\{\cX^{(\beta)}_{\tau_1}-1, \cX^{(\beta)}_{\tau_1}+1\}$ etc\ldots\par
\sm

In particular, we have
\bq
\Bs^{(\beta)}_{1+t}&=&\Bs^{(\beta)}_{1}\Bs^{(\beta)}_{t}\\
&=&\Lambda \dBs^{(\beta)}_t\adjL. \eq
As a consequence, for any given $x\geq 0$, to simulate $X^{(\beta)}_{1+t}$ under $\PP_x$, i.e.\ to sample according to $\Bs^{(\beta)}_{1+t}(x,\cdot)$ (it corresponds to following the path in blue in Figure \ref{fig3}, with $s=1$),
it is sufficient to sample $\cX_0^{(\beta)}$ according to  $\Lambda(x,\cdot)$, which is just the Poisson distribution on $\ZZ_+$ of parameter $x$,
to construct the evolution $(\cX^{(\beta)}_u)_{u\in[0,t]}$, as explained before Proposition \ref{semig} and finally to sample a point $Z$ according to $\adjL (\cX^{(\beta)}_t,\cdot)$, which is the gamma distribution of shape $\cX^{(\beta)}_t$ and of scale 1 (this procedure is colored in red in Figure \ref{fig3}). The distribution of $Z$ is exactly $\Bs^{(\beta)}_{1+t}(x,\cdot)$ and the complexity of this procedure is very simple.\par
\begin{rem}\label{Makarov}
Note that Makarov and Glew \cite{MR2747817} proposed a similar procedure for sampling with respect to $\Bs^{(\beta)}_{t}(x,\cdot)$, for any $x\in\RR_+$ and $t\in\RR_+$, but without using
the birth and death process $\cX^{(\beta)}$, see also Lemma \ref{LcLc} below.
\end{rem}
\par\me

\subsection{A limit theorem by intertwinning}\label{altbi}
One advantage of Lemmas \ref{LcLc1} and \ref{cinter} is to facilitate discrete approximations of the Bessel processes $X^{(\beta)}$ by birth-and-death processes.
Traditionally, such an approximation can be constructed in the following way.
Fix some $\epsilon>0$. To any function $f\in \esC_0(\RR_+)$, associate $\wi T_\epsilon f\in \esc_0(\ZZ_+)$ via
\bq
\fo n\in\ZZ_+,\qquad \wi T_\epsilon f(n)&\df& f(\epsilon n)\eq
and conversely, to any
$g\in \esc_0(\ZZ_+)$, associate $ T_\epsilon g\in \esC_0(\RR_+)$ via
\bq
\fo x\in\RR_+,\qquad  T_\epsilon g(x)&\df& g(\lfloor x/\epsilon \rfloor).\eq
When $X^{(\beta)}_0=\epsilon n$ with $n\in\ZZ_+$, consider
\bq
\tau_\epsilon&\df& \inf\{t\geq 0\st X^{(\beta)}_t\in\{\epsilon(n-1),\epsilon(n+1)\}\}\eq
and  the birth-and-death generator $\dBg_\epsilon^{(\beta)}$ defined by
\bq
\fo n\not= m\in\ZZ_+,\qquad
\dBg_\epsilon^{(\beta)}(n,m)&=&\lt\{
\begin{array}{ll}
\PP_{\epsilon n}[X_{\tau_\epsilon}^{(\beta)}=\epsilon m]/\EE_{\epsilon n}[\tau_\epsilon]&\hbox{ if $m\in\{n-1,n+1\}$}\\
0&\hbox{ otherwise.}\end{array}\rt.\eq
Let $(\exp(t\dBg_\epsilon^{(\beta)}))_{t\geq 0}$ be the semi-group generated by $\dBg_\epsilon^{(\beta)}$ in $ \esc_0(\ZZ_+)$.
It can be expected that for any time $t\geq 0$ and any function $f\in \esC_0(\RR_+)$, we have in the supremum norm of $\esC_0(\RR_+)$,
\bq
\lim_{\epsilon\ri 0_+}
 T_\epsilon \exp(t\dBg_\epsilon^{(\beta)}) \wi T_\epsilon f&=&
\Bs^{(\beta)}_t f.\eq
Nevertheless the rigorous proof of this approximation result is quite technical.
Up to replacing $\wi T_\epsilon$, $ T_\epsilon$ and $\dBg_\epsilon^{(\beta)}$ respectively by $\wi\Lambda_{1/\epsilon}$, $\Lambda_{1/\epsilon}$ and $\epsilon^{-1}\dBg^{(\beta)}$, Lemmas \ref{LcLc1}  and \ref{cinter} enable to simplify considerably such approximations.
Indeed, we have, in the supremum norm of $\esC_0(\RR_+)$, for any time $t\geq 0$ and any function $f\in \esC_0(\RR_+)$,
\bq
\lim_{\epsilon\ri 0_+}
\Lambda_{1/\epsilon} \dBs_{\epsilon^{-1}t}^{(\beta)} \wi\Lambda_{1/\epsilon} f&=&
\lim_{\epsilon\ri 0_+}
\Bs^{(\beta)}_t \Lambda_{1/\epsilon}\wi\Lambda_{1/\epsilon}f=\lim_{\epsilon\ri 0_+}
\Bs^{(\beta)}_{t+\epsilon}f=
\Bs^{(\beta)}_t f.\eq

\section{Classical and birth-and-death Laguerre processes}\label{cabadLp}

\subsection{Classical Laguerre semi-groups}\label{cLp}

Recall that, up to an isomorphism, the affine group acting on $\RR$ is the set
 $\RR\times (\RR\setminus\{0\})$ endowed with the operation $\ltimes$ defined by
\bq
\fo (u,v), (u',v')\in\RR\times (\RR\setminus\{0\}),\qquad (u,v)\ltimes (u',v')&\df& (u+v^{-1}u', vv').\eq
Consider $\cS\df \RR_+\times (0,\iy )$, which is stable by $\ltimes$ and so $(\cS, \ltimes)$ is a semi-group.
For any $\sigma>0$, define
\bq
\cS_\sigma&\df& \{(\sigma(e^{t}-1),e^{-t})\st t\in\RR_+\}\ \subset\ \cS.\eq
It is immediate to check that $(\cS_\sigma,\ltimes)$ is a one-dimensional sub-semi-group of the two dimensional semi-group $(\cS, \ltimes)$.
\par
Fix $\beta>0$, and  recall that for any $(u,v)\in\cS$,  $Q^{(\beta)}_u$ and $d_v$ belong to $\cB(\esL^2(\mu_{\beta}))$, the space of operators  bounded in $\esL^2(\mu_{\beta})$.
Denote
\bq
\fo (u,v)\in \cS,\qquad S^{(\beta)}_{u,v}&\df& Q^{(\beta)}_ud_v\ \in\ \cB(\esL^2(\mu_{\beta})).\eq
Endowing $\cB(\esL^2(\mu_{\beta}))$ with the usual composition operation, we see that the above mapping is a semi-group morphism,
as a consequence of the scaling intertwining relation
\bqn{scal}
\fo (u,v)\in\cS,\qquad Q^{(\beta)}_ud_v&=&d_vQ^{(\beta)}_{vu}.\eqn
It follows that  $(S^{(\beta)}_{(u,v)})_{(u,v)\in\cS}$ is a semi-group, indexed by a two-dimensional parameter.
Restricting the index set to $\cS_\sigma$, for some given $\sigma>0$, we write $(K^{(\beta,\sigma)}_t)_{t\in\RR_+}$ for the semi-group
given by
\bq
\fo t\geq 0,\qquad K^{(\beta,\sigma)}_t&\df& S_{(\sigma(e^{t}-1),e^{-t})}.\eq
Let us proceed by computing the generator of this continuous semi-group (still in $\esL^2(\mu_{\beta})$).
First remark that $(d_{e^{-t}})_{t\geq 0}$ is also a continuous semi-group  in $\esL^2(\mu_{\beta})$
and that its generator is given by $D$, the closure of the operator defined, $\fo f\in \esPe$ and  $\fo x\in\RR_+$, by
\bqn{def:D}
D f(x)&\df& -x\pa f(x)\eqn
(we hope the notation $D$ for the  generator of $(d_{e^{-t}})_{t\geq 0}$ is not confusing).
By differentiating
$K^{(\beta,\sigma)}_t$ at $t=0_+$ in $\esL^2(\mu_{\beta})$, we get that the generator $L_{\beta,\sigma}$ of $K^{(\beta,\sigma)}$ is the closure
of the operator $\sigma G_{\beta}+D$ acting on $\esPe$, where we recall that $G_{\beta}$ is the generator of the semi-group $(Q^{(\beta)}_t)_{t\in\RR_+}$. In particular,
$L_{\beta,\sigma}$ acts on functions $f\in \esPe$ via
\bqn{Lsigma}
\fo x\in\RR_+,\qquad
L_{\beta,\sigma}f(x)&=& \sigma x\pa^2f(x) +(\sigma\beta -x)\pa f(x)\eqn
and is the Laguerre differential operator.
This operator is a one-dimensional diffusion generator and to get a corresponding reversible measure it is sufficient
to compute the associated speed measure. Up to a positive multiplicative factor, its density with respect to the Lebesgue measure on $\RR_+$ is given by
\bq
\RR_+\ni x&\mapsto& \frac1{\sigma x}\exp\lt(\int_1^x \frac{\sigma\beta-y}{\sigma y}\, dy\rt)\\
&=&\frac1{\sigma x}\exp\lt(\beta \ln(x) -\frac{x-1}{\sigma }\rt)\\
\eq
(see for instance Chapter 15 of the book of Karlin and Taylor
\cite{MR611513}). It appears that this speed measure has a finite mass and that it can be normalized into the probability measure $\nu_{\sigma}$,
which is the gamma distribution of shape parameter $\beta$ and scale parameter $\sigma$, i.e.
\bq
\fo x\in(0,\iy ),\qquad \nu_{\sigma}(dx)&=& \frac{1}{\sigma^{\beta}\Gamma(\beta)}x^{\beta-1}\exp(-x/\sigma)\, dx.\eq
It follows (e.g.\ via Freidrichs theory) that the restriction of $L_{\beta,\sigma}$ to $\esPe$ can be extended into a self-adjoint operator on $\esL^2(\nu_{\sigma})$
and that the corresponding Markov semi-group, still denoted $K^{(\beta,\sigma)}$, is  continuous on $\esL^2(\nu_{\sigma})$.
\begin{rem}\label{Q0dot}
The  probability measure $\nu_{\sigma}$ coincides with
$Q^{(\beta)}_{\sigma}(0,\cdot)$, seen as the regular Markov kernel associated to the
$\esC_0(\RR_+)$-semi-group $(Q^{(\beta)}_t)_{t\in\RR_+}$ (note that this point of view
also leads to an interpretation of the semi-group $(K^{(\beta,\sigma)}_t)_{t\in\RR_+}$ in $\esC_0(\RR_+)$).
Indeed, for any $f\in\esC_0(\RR_+)$, we have for any $t\geq 0$,
\bq
Q^{(\beta)}_\sigma K_t^{(\beta,\sigma)}f(0)&=&Q^{(\beta)}_\sigma Q^{(\beta)}_{\sigma(e^{t}-1)}d_{e^{-t}} f(0)
=Q^{(\beta)}_{\sigma e^{t}}d_{e^{-t}}f(0)
=d_{e^{-t}}Q^{(\beta)}_\sigma f(0)\\
&=&Q^{(\beta)}_\sigma f(e^{-t}0)=Q^{(\beta)}_\sigma f (0).\eq
Since $\esC_0(\RR_+)$ is a core for $\esL^1(\nu_{\sigma})$, we get that $Q^{(\beta)}_{\sigma}(0,\cdot)$ is invariant for the Markov semi-group $K^{(\beta,\sigma)}$.
By irreducibility of the generator $L_{\beta,\sigma}$, there exists at most one  such invariant probability,
leading to $\nu_{\sigma}=Q^{(\beta)}_{\sigma}(0,\cdot)$.
\par
Another way to obtain this equality, is to see $Q^{(\beta)}_{\sigma}(0,\cdot)$ as the limiting distribution for large times of the semi-group
$K^{(\beta,\sigma)}$. Indeed, or any $f\in\esC_0(\RR_+)$ and any $x\in \RR_+$, we have
\bq
\lim_{t\ri\iy }
\delta_x K_t^{(\beta,\sigma)}f&=&
\lim_{t\ri\iy }K_t^{(\beta,\sigma)}f(x)=\lim_{t\ri\iy }Q^{(\beta)}_{\sigma(e^{t}-1)}d_{e^{-t}}f(x)
=\lim_{t\ri\iy }d_{e^{-t}}Q^{(\beta)}_{\sigma(1-e^{-t})}f(x)
\\ &=&\lim_{t\ri\iy }Q^{(\beta)}_{\sigma(1-e^{-t})}f(e^{-t}x)\\
&=&Q^{(\beta)}_\sigma f(0)\eq
(for the penultimate equality, we used that with respect to the operator supremum norm in $\esC_0(\RR_+)$, the difference $Q^{(\beta)}_{\sigma(1-e^{-t})}-Q^{(\beta)}_\sigma$ converges to zero for large $t\geq 0$).
By dominated convergence, it follows that for any probability measure $p$ on $\RR_+$, we have the weak convergence of
$pK_t$ toward $Q^{(\beta)}_{\sigma}(0,\cdot)$ for large $t\geq 0$. In particular, with $p=\nu_{\sigma}$, we recover that $\nu_{\sigma}=Q^{(\beta)}_{\sigma}(0,\cdot)$.
We deduce that \bq
\fo x\in(0,\iy ),\qquad
Q^{(\beta)}_{\sigma}(0,dx)&=& \frac{1}{\sigma^{\beta}\Gamma(\beta)}x^{\beta-1}\exp(-x/\sigma)\, dx\eq
namely $Q^{(\beta)}_{\sigma}(0,\cdot)$ admits the density $(0,\iy )\ni x\mapsto \exp(-x/\sigma)/\sigma^{\beta}$
with respect to $\mu_{\beta}$.
\end{rem}

\subsection{Discrete Laguerre semi-groups}\label{dLp}

The starting points of the considerations of the previous subsection were
the multiplicative semi-group property of $(d_v)_{v\in(0,\iy )}$  and
 the scaling intertwining \eqref{scal}, both in $\esL^2(\mu_{\beta})$ (where the parameter $\beta>0$ keeps being omitted).
 Here we will replace these  relations by their discrete analogues in order to deduce similar constructions.\par
\sm
As seen in Appendix \ref{otdco}, to get nice properties of the discrete dilatation operators, we cautiously restrict
our attention to the family $(\discret{D}_{v})_{v\in(0,1]}$ and introduce the
 sub-semi-group $\wi\cS$ of $\cS$ defined by
 \bq
 \wi\cS&\df& \{(u,v)\in\cS\st v\in (0,1]\}.\eq
 Indeed,  in addition of the multiplicative semi-group property of $(\discret{D}_v)_{v\in(0,1]}$, we have
 \bqn{dscal}
\fo (u,v)\in\wi\cS,\qquad \dBs^{(\beta)}_u\discret{D}_v&=&\discret{D}_v\dBs^{(\beta)}_{vu}.\eqn
Consider the semi-group $(\discret{S}^{(\beta)}_{u,v})_{(u,v)\in \wi\cS}$ of operators in $\cB(\esL^2(\fm))$ given by
\bq
\fo (u,v)\in \cS,\qquad \discret{S}^{(\beta)}_{u,v}&\df& \dBs^{(\beta)}_u\discret{D}_v\ \in\ \cB(\esLd^2(\fm_{\beta}))\eq
as well as, for $\sigma>0$, the sub-semi-group $\dLs^{(\beta,\sigma)}\df(\dLs^{(\beta,\sigma)}_t)_{t\geq 0}$ defined by
\bq
\fo t\geq 0,\qquad \dLs^{(\beta,\sigma)}_t&\df& \discret{S}^{(\beta)}_{(\sigma(e^{t}-1),e^{-t})}.\eq
\par
We compute the generator $\dLg_{\beta,\sigma}$ of $\dLs^{(\beta,\sigma)}$ in $\esLd^2(\fm_{\beta})$ exactly as in Subsection \ref{cLp}, taking into account Lemma \ref{appl2} of Appendix \ref{otdco}.
We get that $\dLg_{\beta,\sigma}$ acts on functions $f$ belonging to the core $\esFf$ via
\bqn{LG}
\fo n\in\ZZ_+,\qquad \dLg_{\beta,\sigma} f (n)&=&\sigma \dBg_{\beta} f(n)+\discret{D} f (n)\eqn
where the operator $\discret{D}$ is defined in Lemma \ref{appl2}.
Thus the generator $\dLg_{\beta,\sigma}$
can  be represented by the infinite tri-diagonal matrix $(\dLg_{\beta,\sigma}(m,n))_{m,n\in\ZZ_+}\df (\dLg_{\beta,\sigma} \un_{\{n\}}(m))_{m,n\in\ZZ_+}$, given explicitly by
\bqn{dLg}
\fo m,n\in \ZZ_+,\qquad
\dLg_{\beta,\sigma}(m,n)&=&\lt\{\begin{array}{ll}
(\sigma +1)m&\hbox{ if $n=m-1$}\\
-\sigma(2m+\beta)-m&\hbox{ if $n=m$}\\
\sigma(m+\beta)&\hbox{ if $n=m+1$}\\
0&\hbox{ otherwise.}
\end{array}
\rt.
\eqn
\par
A corresponding invariant measure $\wi\fn_{\sigma}$ is given by
\bq
\fo n\in\ZZ_+,\qquad
\wi\fn_{\sigma}(n)&\df&\prod_{m=0}^{n-1}\frac{\dLg_{\beta,\sigma}(m,m+1)}{\dLg_{\beta,\sigma}(m+1,m)}\\
&=&\lt(\frac{\sigma}{\sigma+1}\rt)^n\frac{(n+\beta-1)(n+\beta-1)\cdots \beta}{n!}\\
&=&\lt(\frac{\sigma}{\sigma+1}\rt)^n\fm_{\beta}(n).
\eq\par
Recognizing $\wi\fn_{\sigma}$ to be proportional to the negative binomial distribution of parameters $\beta$ and $\sigma/(1+\sigma)$, we get
\bq
 \sum_{n\in\ZZ_+}\wi\fn_{\sigma}(n)&=&\lt(1-\frac{\sigma}{1+\sigma}\rt)^{-\beta}\ =\ (1+\sigma)^{\beta}.\eq
From now on, we will rather consider the invariant probability $\fn_{\sigma}$ which is normalization of $\wi\fn_{\sigma}$, dividing it by  $(1+\sigma)^{\beta}$.
\par
It follows (e.g.\ via Freidrichs theory) that the restriction of $\dLg_{\beta,\sigma}$ to $\esFf$ can be extended into a self-adjoint operator on $\esLd^2(\fn_{\sigma})$
and that the corresponding Markov semi-group, still denoted $\dLs^{(\beta,\sigma)}$, is  continuous on $\esLd^2(\fn_{\sigma})$.
\par
\begin{rem} The arguments of Remark \ref{Q0dot} are still valid in the present setting and we deduce that for any $\sigma>0$,
\bq
\dBs^{(\beta)}_{\sigma}(0,\cdot)&=&\fn_{\sigma}\eq
i.e.\ $\dBs^{(\beta)}_{\sigma}(0,\cdot)$ admits the density $\ZZ_+\ni n\mapsto (1+\sigma)^{-\beta}(\sigma/(1+\sigma))^n$ with respect to $\fm_{\beta}$.
\end{rem}
\begin{rem}
In the previous two subsections, only the case $\beta,\sigma>0$ was considered. Indeed, for $\beta=0$ or $\sigma=0$, we have $\fn_{\beta,\sigma}=\delta_0$, whose $\esL^2$-theory is not really satisfactory! Furthermore, when $\beta=0$, $\esPe$ is not even included into $\esL^2(\mu_\beta)$.
Thus for $\beta=0$, it is more relevant to work with $\esC_0$ spaces, from an analytical point of view, or with Markov kernels, from a probabilist point of view.
\end{rem}

\subsection{Proof of Theorem \ref{eq:gateway-lag}: Gateway between Laguerre semi-groups}\label{iocadLp}

Recall that the Markov kernel $\Lambda$ from $\RR_+$ to $\ZZ_+$ was defined in \eqref{eq:Lambda-def} and that more generally for any $\sigma\geq 0$, we consider
the Markov kernel $\Lambda_\sigma=d_\sigma\Lambda$, meaning that for each $x\in\RR_+$, $\Lambda_\sigma(x,\cdot)$ is the Poisson distribution of parameter $\sigma x$.
The following result is an extension of Theorem \ref{thm:mainK}, in the spirit of Lemma \ref{cinter}. 
\begin{pro}\label{laginter}
Let $\beta,\sigma\geq 0$ be given, as well as $(u,v)\in \cS$.
We have on  $\esc_0(\ZZ_+)$
\bq
\Gate{S_{(u,v)}^{(\beta)}}{\ \Lambda_\sigma\ }{\discret{S}_{(\sigma u,v)}^{(\beta)}}.
 \eq
 In particular, for any $\beta,\varsigma,\sigma,t\geq 0$,
 we deduce that on  $\esc_0(\ZZ_+) \cup \esLd^2(\fn_{\varsigma \sigma})$
 \bq
\Gate{\Ls_{t}^{(\beta,\varsigma)}}{\ \Lambda_\sigma\ }{\dLs_{t}^{(\beta,\varsigma\sigma)}}.
 \eq
 This gateway relationship can be lifted to a unitary equivalence.
\end{pro}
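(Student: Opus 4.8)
The plan is to obtain the first intertwining by purely formal composition of intertwinings already at our disposal --- the main gateway \eqref{eq:gatewayLag}, the commutation $d_\sigma\Lambda=\Lambda\discret{D}_\sigma$ of Lemma~\ref{lem:dLamD}, the Bessel scaling \eqref{scal}, and the multiplicative semi-group rule $\discret{D}_\sigma\discret{D}_v=\discret{D}_{\sigma v}$ of the binomial kernels --- then to read off the Laguerre statement by specialising $(u,v)$, and finally to lift the $\esL^2$ version to a unitary equivalence through the quasi-affinity properties of Lemma~\ref{lem:lambda_def} together with \cite[Lemma~4.1]{Douglas}, exactly as in the Hilbert-space proof of Theorem~\ref{thm:main}.

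First I would prove $\Gate{S_{(u,v)}^{(\beta)}}{\ \Lambda_\sigma\ }{\discret{S}_{(\sigma u,v)}^{(\beta)}}$ on $\esc_0(\ZZ_+)$, carrying out the computation for $v\in(0,1]$ --- where $\discret{D}_v$, hence $\discret{S}_{(\sigma u,v)}^{(\beta)}=\dBs^{(\beta)}_{\sigma u}\discret{D}_v$, is a genuine Feller operator on $\esc_0(\ZZ_+)$, and which is anyway the only range of $v$ needed below; for $v>1$ the identical chain of operator identities holds on the dense linear span of the exponentials $\mathfrak{p}_s$, $|s|<1$. Writing $\Lambda_\sigma=d_\sigma\Lambda$, using $d_v d_\sigma=d_{v\sigma}$, Lemma~\ref{lem:dLamD} and \eqref{eq:gatewayLag},
\bq
S_{(u,v)}^{(\beta)}\Lambda_\sigma&=&\Bs^{(\beta)}_u\, d_v d_\sigma\Lambda\ =\ \Bs^{(\beta)}_u\, d_{v\sigma}\Lambda\ =\ \Bs^{(\beta)}_u \Lambda\,\discret{D}_{v\sigma}\ =\ \Lambda\,\dBs^{(\beta)}_u\discret{D}_{v\sigma},
\eq
while, using \eqref{eq:gatewayLag}, then \eqref{scal} in the form $d_\sigma\Bs^{(\beta)}_{\sigma u}=\Bs^{(\beta)}_u d_\sigma$, then Lemma~\ref{lem:dLamD} again,
\bq
\Lambda_\sigma\discret{S}_{(\sigma u,v)}^{(\beta)}&=&d_\sigma\Lambda\,\dBs^{(\beta)}_{\sigma u}\discret{D}_v\ =\ d_\sigma\Bs^{(\beta)}_{\sigma u}\Lambda\,\discret{D}_v\ =\ \Bs^{(\beta)}_u\, d_\sigma\Lambda\,\discret{D}_v\ =\ \Bs^{(\beta)}_u\Lambda\,\discret{D}_\sigma\discret{D}_v\ =\ \Lambda\,\dBs^{(\beta)}_u\discret{D}_\sigma\discret{D}_v.
\eq
The two right-hand sides agree by the semi-group identity $\discret{D}_\sigma\discret{D}_v=\discret{D}_{\sigma v}$ (recorded in Appendix~\ref{otdco}; it also follows at once from $\binom{n}{m}\binom{m}{k}=\binom{n}{k}\binom{n-k}{m-k}$, or from the composition of probability generating functions).

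Specialising to $(u,v)=(\varsigma(e^{t}-1),e^{-t})\in\cS$ turns $S_{(u,v)}^{(\beta)}$ into $\Ls_t^{(\beta,\varsigma)}$ and $\discret{S}_{(\sigma u,v)}^{(\beta)}$ into $\dLs_t^{(\beta,\varsigma\sigma)}$ by the defining formulas of Subsections~\ref{cLp}--\ref{dLp}, giving $\Gate{\Ls_t^{(\beta,\varsigma)}}{\Lambda_\sigma}{\dLs_t^{(\beta,\varsigma\sigma)}}$ on $\esc_0(\ZZ_+)$. To extend this to $\esLd^2(\fn_{\varsigma\sigma})$ --- now with $\beta,\varsigma,\sigma>0$, the degenerate values being discarded as in the remark of Subsection~\ref{dLp} --- I would note that $\nu_\varsigma d_\sigma=\nu_{\varsigma\sigma}$, hence $\nu_\varsigma\Lambda_\sigma=\nu_{\varsigma\sigma}\Lambda=\fn_{\varsigma\sigma}$ by the very gamma integral computed in the proof of Lemma~\ref{lem:lambda_def}; Markovianity of $\Lambda_\sigma$ and Cauchy--Schwarz as in \eqref{csL} then make $\Lambda_\sigma$ a contraction from $\esLd^2(\fn_{\varsigma\sigma})$ into $\esL^2(\nu_\varsigma)$. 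Since $d_\sigma$ is an isometric isomorphism of $\esL^2(\nu_{\varsigma\sigma})$ onto $\esL^2(\nu_\varsigma)$ (change of variables, cf.~\eqref{dc*}), the operator $\Lambda_\sigma=d_\sigma\Lambda$ inherits the quasi-affinity property from $\Lambda\st\esLd^2(\fn_{\varsigma\sigma})\to\esL^2(\nu_{\varsigma\sigma})$, which holds by part~3) of Lemma~\ref{lem:lambda_def} because $\fn_{\varsigma\sigma}(n)$ decays geometrically (its asymptotics being of the form $Ce^{-2n\mathfrak{g}(n)}$ with $\mathfrak{g}(n)\to-\tfrac12\ln\tfrac{\varsigma\sigma}{1+\varsigma\sigma}>0$, in particular $\mathfrak{g}(n)=o(\ln n)$). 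As $\Ls_t^{(\beta,\varsigma)}$ and $\dLs_t^{(\beta,\varsigma\sigma)}$ are bounded on $\esL^2(\nu_\varsigma)$ and $\esLd^2(\fn_{\varsigma\sigma})$ (Subsections~\ref{cLp}--\ref{dLp}), the intertwining propagates from the dense subspace $\esFf(\ZZ_+)$ to all of $\esLd^2(\fn_{\varsigma\sigma})$ by continuity.

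Finally, $\Ls_t^{(\beta,\varsigma)}$ and $\dLs_t^{(\beta,\varsigma\sigma)}$ are self-adjoint contraction semi-groups, their generators $L_{\beta,\varsigma}$ and $\dLg_{\beta,\varsigma\sigma}$ being non-positive and self-adjoint by the Friedrichs constructions of Subsections~\ref{cLp}--\ref{dLp}; passing to Hilbert-space adjoints in $\Gate{\Ls_t^{(\beta,\varsigma)}}{\Lambda_\sigma}{\dLs_t^{(\beta,\varsigma\sigma)}}$ produces the reverse intertwining by the quasi-affinity $\Lambda_\sigma^*$, so the two semi-groups are quasi-similar and \cite[Lemma~4.1]{Douglas} upgrades quasi-similarity of self-adjoint operators to unitary equivalence. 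Beyond this largely formal chaining, the only points demanding genuine attention are the verification of the decay hypothesis of Lemma~\ref{lem:lambda_def}(3) for $\fn_{\varsigma\sigma}$ and the bookkeeping of function spaces when $v>1$ (where $\discret{D}_v$, and hence $\discret{S}_{(\sigma u,v)}^{(\beta)}$, is only a signed kernel and no longer Feller); I expect this last bookkeeping to be the main nuisance.
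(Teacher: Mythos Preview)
Your proof is correct and follows essentially the same route as the paper: both compose the gateway relation \eqref{eq:gatewayLag}, the commutation $d_\sigma\Lambda=\Lambda\discret{D}_\sigma$, and the Bessel scaling to obtain the first intertwining, then specialise $(u,v)=(\varsigma(e^t-1),e^{-t})$ and extend by density; the paper simply packages the Bessel step as the already-proved Lemma~\ref{cinter} ($\Bs^{(\beta)}_u\Lambda_\sigma=\Lambda_\sigma\dBs^{(\beta)}_{\sigma u}$) and writes a single chain rather than computing both sides separately, but the ingredients are identical. Your treatment of the $\esL^2$ extension (checking $\nu_\varsigma\Lambda_\sigma=\fn_{\varsigma\sigma}$, the Cauchy--Schwarz contraction bound, and the decay hypothesis of Lemma~\ref{lem:lambda_def}(3)) is in fact more detailed than the paper's one-line ``density argument'', and your lift to unitary equivalence via Douglas matches the paper's invocation exactly.
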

\proof
From Lemmas \ref{lem:dLamD} and \ref{cinter}, we know, that for any $\beta,\sigma\geq 0$, on $\esc_0(\ZZ_+)$,
\bq
\fo u\geq 0, \qquad \Bs_u^{(\beta)}\Lambda_\sigma &=& \Lambda_\sigma \dBs^{(\beta)}_{\sigma u}\\
\fo v\geq 0,\qquad d_v\Lambda_\sigma &=&
d_vd_\sigma\Lambda
=d_\sigma d_v\Lambda
=D\sigma\Lambda\discret{D}_v
= \Lambda_\sigma\discret{D}_v.\eq
It follows that for any $(u,v)\in \cS$,
\bq
S_{(u,v)}^{(\beta)}\Lambda_\sigma&=&\Bs_u^{(\beta)}d_v\Lambda_\sigma=\Bs_u^{(\beta)}\Lambda_\sigma\discret{D}_v=\Lambda_\sigma\dBs_{\sigma u}^{(\beta)}\discret{D}_v
=\Lambda_\sigma\discret{S}^{(\beta)}_{(\sigma u,v)}.\eq
\par
The second announced intertwining relationship is obtained by taking $(u,v)=(\varsigma(e^{t}-1), e^{-t})$, for $\varsigma>0$ and $t\geq 0$ and its extension to $ \esLd^2(\fn_{\varsigma \sigma})$ follows by a density argument.  Finally, since by Lemma \ref{lem:lambda_bounded}, $\Lambda_{\sigma}: \esLd^2(\fn_{\varsigma \sigma}) \mapsto \esL^2(\nu_{\beta,\varsigma})$  is a quasi-affinity, we obtain from a  result of Douglas \cite{Douglas}  that there exists a unitary operator that intertwines $\Ls_{t}^{(\beta,\varsigma)}$ and $\dLs_{t}^{(\beta,\varsigma\sigma)}$.
\wwtbp

We also provide an alternative proof which is in the spirit of the one developed in Section \ref{cabadBp}. It stems on the following gateway relationship  between the generators of the discrete and continuous Laguerre semi-groups. Note that the lifting of this identity on $\esc_0(\ZZ_+) \cup \esLd^2(\fn_{\varsigma \sigma})$ between the corresponding semi-groups  could also be obtained from this result  by following a line of reasoning  similar  to the one developed in Section \ref{sec:proof_main}.
\begin{lemma}\label{lem:inter_xpart}
We have, on $\esP_{\fe_{-}}$,
\begin{equation}\label{eq:com_int}
  \Gate{ \discret{D}}{\na}{ D}
\end{equation}
where we have set $\discret{D}f (n)= n \delta_-f (n)$. Consequently, for any $\beta,\sigma>0$,
\begin{equation}\label{eq:com_int_L}
  \Gate{\discret{L}_{\beta,\sigma} }{\na}{\cont{L}_{\beta,\sigma}}.
\end{equation}

\end{lemma}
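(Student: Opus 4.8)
The plan is to establish \eqref{eq:com_int} by exactly the operator-calculus manipulation underlying Lemma \ref{lem:gate_gen_B}, and then to read off \eqref{eq:com_int_L} for free by linearity, combining \eqref{eq:com_int} with the generator-level gateway \eqref{bb}. Conceptually, $D$ and $\discret{D}$ generate the dilation semi-groups $(d_{e^{-t}})_{t\geq0}$ and $(\discret{D}_{e^{-t}})_{t\geq0}$, so \eqref{eq:com_int} is the infinitesimal form of the intertwining $d_\sigma\Lambda=\Lambda\discret{D}_\sigma$ of Lemma \ref{lem:dLamD}; but since $d_\sigma$ does not preserve $\esP_{\fe_{-}}$ for $\sigma\neq1$, it is cleanest to verify \eqref{eq:com_int} by hand.

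Recall that, viewing $\fe(x)=e^{x}$ as a multiplication operator, the $n$-th component of $\na$ is the functional $f\mapsto\na f(n)=(\pa^{n}(\fe f))(0)$; for $f=P\fe_{-1}\in\esP_{\fe_{-}}$ one has $\fe f=P$, hence $\na f(n)=\pa^{n}P(0)$, so $\na$ is a bijection from $\esP_{\fe_{-}}$ onto $\esFf(\ZZ_+)$ with inverse $\Lambda$ (Lemma \ref{lem:lambda_def}), which keeps every expression below finite. The key computation, on $\esP_{\fe_{-}}$: since $\fe$ commutes with the multiplication operator $x$ and $\fe\pa=(\pa-1)\fe$ (already used for Lemma \ref{lem:gate_gen_B}), one has $\fe D=-x\fe\pa=(-x\pa+x)\fe$, and the Leibniz relation $\pa^{n}x=x\pa^{n}+n\pa^{n-1}$ gives
\[
\pa^{n}\fe D=\big(-x\pa^{n+1}-n\pa^{n}+x\pa^{n}+n\pa^{n-1}\big)\fe .
\]
Evaluating at $0$ annihilates the two terms carrying a factor $x$, so that for all $f\in\esP_{\fe_{-}}$ and $n\in\ZZ_+$,
\[
\na(Df)(n)=-n\,(\pa^{n}(\fe f))(0)+n\,(\pa^{n-1}(\fe f))(0)=n\big(\na f(n-1)-\na f(n)\big)=\discret{D}\,\na f(n),
\]
the $\pa^{-1}$-term being harmless as it is multiplied by $0$ when $n=0$; this is \eqref{eq:com_int}.

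For \eqref{eq:com_int_L} I would note from \eqref{Lsigma} that $\cont{L}_{\beta,\sigma}=\sigma\Bg_{\beta}+D$ on $\esP_{\fe_{-}}$, from \eqref{LG} that $\discret{L}_{\beta,\sigma}=\sigma\dBg_{\beta}+\discret{D}$ on $\esFf(\ZZ_+)$, and then add, using the generator gateway $\na\Bg_{\beta}=\dBg_{\beta}\na$ of \eqref{bb} together with \eqref{eq:com_int}:
\[
\na\,\cont{L}_{\beta,\sigma}=\sigma\,\na\Bg_{\beta}+\na D=\sigma\,\dBg_{\beta}\na+\discret{D}\,\na=(\sigma\dBg_{\beta}+\discret{D})\,\na=\discret{L}_{\beta,\sigma}\,\na .
\]
I do not foresee any real obstacle: the only point needing care is to stay on the cores $\esP_{\fe_{-}}$, resp.\ $\esFf(\ZZ_+)$, on which $\na$ is a genuine bijection and $D,\discret{D},\Bg_{\beta},\dBg_{\beta}$ are given by the explicit formulas used above, and --- for the exponentiation remarked before the statement --- that these cores are invariant under the corresponding Laguerre generators. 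The passage from \eqref{eq:com_int_L} to the gateway between the Laguerre semi-groups on $\esc_0(\ZZ_+)\cup\esLd^2(\fn_{\varsigma\sigma})$ is then carried out just as in Section \ref{sec:proof_main}, and recovers Proposition \ref{laginter}.
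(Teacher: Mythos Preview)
Your proof is correct and follows essentially the same approach as the paper: a direct Leibniz-type computation showing $\na D=\discret{D}\,\na$ on $\esP_{\fe_{-}}$, followed by linearity using \eqref{bb} and the decompositions \eqref{Lsigma}, \eqref{LG}. The only cosmetic difference is that you use the commutation $\fe\pa=(\pa-1)\fe$ upfront (in the style of Lemma~\ref{lem:gate_gen_B}) to reduce $\pa^{n}\fe D$ in one stroke, whereas the paper expands $-\na Df(n)$ and $-\delta_-\na f(n)$ separately via Leibniz with binomial coefficients and then matches them; the second half of your argument is identical to the paper's.
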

\begin{proof}
Since plainly $ D(\esP_{\fe_{-}})\subseteq \esP_{\fe_{-}}$, one has, on $\esP_{\fe_{-}}$, that
\begin{eqnarray*}
    -\na D f(n) &=& \partial^n(\fe x\partial f)(0)
     = n\partial^{n-1}\fe \partial f(0)
   =n \sum_{k=0}^{n-1} {n-1 \choose k}\partial^{k+1}f(0)
\end{eqnarray*}
and
\begin{eqnarray*}
   -\delta_-\na f(n) &=& (\partial^{n}\fe f)(0)-(\partial^{n-1}\fe f)(0)\\
   &=&
   \sum_{k=1}^{n} {n \choose k}\partial^{k}f(0)-\sum_{k=1}^{n-1} {n-1 \choose k}\partial^{k}f(0)\\
   &=& \partial^{n}f(0) + \sum_{k=1}^{n-1} {n-1 \choose k-1}\partial^{k}f(0)\\
   &=&    \sum_{k=0}^{n-1} {n-1 \choose k}\partial^{k+1}f(0)
\end{eqnarray*}
which by linearity completes the proof of the first identity. Next, invoking the  relation \eqref{Lsigma}  and Lemmas \ref{lem:lambda_range} and \ref{lem:inter_xpart}, we  get that, on $\esP_{\fe_{-}}$,
\begin{equation}\label{eq:com_int}
  \na \cont{L}_{\beta,\sigma} =  \sigma \na \cont{G}_{\beta}+ \na D = \sigma   \discret{G}_{\beta} \na + \discret{D}\na  = \discret{L}_{\beta,\sigma} \na
\end{equation}
where we also used \eqref{LG}, which completes the proof of the Lemma.
\end{proof}

\subsection{Dual gateway relationship and product of intertwining kernels}

In this part, we focus on the  $\esL^2$ interpretation of the above result, that is by viewing  the Markov kernels
$S_{(u,v)}^{(\beta)}$ and $K_t^{(\beta,\varsigma)}$ (respectively $\discret{S}_{(u,v)}^{(\beta)}$ and $\dLs_t^{(\beta,\varsigma\sigma)}$) as bounded operators on $\esL^2(\mu_\beta)$ and $\esL^2(\nu_{\beta,\varsigma})$ (resp.\ $\esLd^2(\fm_{\beta})$ and $\esLd^2(\fn_{\beta,\varsigma\sigma})$),
where we must restrict our attention to $\beta,\varsigma,\sigma >0$ and $(u,v)\in\wi \cS$.
Similarly, $\Lambda_\sigma$ should be seen as bounded operators from $\esLd^2(\fn_{\beta,\varsigma\sigma})$ to $\esL^2(\nu_{\beta,\varsigma})$. Note that due to the Markov property, all the previous operators have their
operator norms equal to $1$. The interest of this point of view is that it is immediate to consider the adjoint operators.
For any $\beta,\varsigma,\sigma>0$ and $ t\geq 0$, the operator $K_t^{(\beta,\varsigma)}$ (respectively\ $\dLs_t^{(\beta,\varsigma\sigma)}$) is self-adjoint in $\esL^2(\nu_{\beta,\varsigma})$ (resp.\ $\esLd^2(\fn_{\beta,\varsigma\sigma})$).
Denote $\wit \Lambda_{\beta,\varsigma,\sigma}\st \esL^2(\nu_{\beta,\varsigma})\ri \esLd^2(\fn_{\beta,\varsigma\sigma})$ the adjoint operator of $\Lambda_\sigma$.
It should not be confounded with $\Lambda_{\beta,\sigma}^{*}\st \esL^2(\mu_{\beta})\ri \esLd^2(\fm_{\beta})$, the adjoint operator of $\Lambda_\sigma$
when the latter is acting from $\esLd^2(\fm_{\beta})$ to $\esL^2(\mu_{\beta})$. These operators are nevertheless linked.
\begin{lem}\label{differentsadjoints}
We have for any $\beta,\sigma> 0$,
\bq
\wit \Lambda_{\beta,\varsigma,\sigma}&=&\lt(\varsigma^{-1}+\sigma\rt)^\beta \Lambda_{\beta,\varsigma^{-1}+\sigma}^{*}\ =\ \wi \Lambda_{\beta,\varsigma^{-1}+\sigma}\ =\ \wi \Lambda_{\beta,\sigma}d_{\varsigma\sigma/(1+\varsigma\sigma)}\eq
at least on $\esFf$.
\end{lem}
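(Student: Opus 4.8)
The idea is to compute the Markov kernel $\wit\Lambda_{\beta,\varsigma,\sigma}$ explicitly and then to check that the resulting kernel coincides with each of the three expressions on the right-hand side. The cleanest route is to evaluate the bilinear pairing $\langle f,\Lambda_\sigma g\rangle_{\nu_{\beta,\varsigma}}$ for $f\in\esL^2(\nu_{\beta,\varsigma})$ and $g\in\esFf$: restricting $g$ to $\esFf$ keeps all sums finite, so no measurability or interchange issue arises, and $\esFf$ is dense in $\esLd^2(\fn_{\beta,\varsigma\sigma})$, which is enough to identify the adjoint. (Equivalently, one verifies directly from the Poisson moments that $\nu_{\beta,\varsigma}\Lambda_\sigma=\fn_{\beta,\varsigma\sigma}$ --- which is in any case needed for $\Lambda_\sigma$ to act between the stated $\esL^2$ spaces --- and then $\wit\Lambda_{\beta,\varsigma,\sigma}$ is the Bayesian reversal of $\Lambda_\sigma$, i.e.\ $\wit\Lambda_{\beta,\varsigma,\sigma}(n,dx)$ is proportional to $\PP({\rm{Pois}}(\sigma x)=n)\,\nu_{\beta,\varsigma}(dx)$; but the pairing computation makes everything self-contained.)

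\textbf{The core computation.} Set $\tau\df\varsigma^{-1}+\sigma=(1+\varsigma\sigma)/\varsigma$. Inserting the ${\rm{Pois}}(\sigma x)$ weights of $\Lambda_\sigma(x,\cdot)$ and the density of $\nu_{\beta,\varsigma}$, one obtains $\langle f,\Lambda_\sigma g\rangle_{\nu_{\beta,\varsigma}}=\sum_{n\in\ZZ_+}g(n)\,\frac{\sigma^n}{n!\,\varsigma^\beta\Gamma(\beta)}\int_0^\infty f(x)\,x^{n+\beta-1}e^{-\tau x}\,dx$. Rescaling $x\mapsto x/\tau$ turns the integral into $\tau^{-(n+\beta)}\Gamma(n+\beta)\,\EE[f({\rm{Gam}}(n+\beta)/\tau)]$, and the resulting prefactor $\frac{\sigma^n\Gamma(n+\beta)}{n!\,\varsigma^\beta\Gamma(\beta)\,\tau^{n+\beta}}$ equals $\fm_\beta(n)\cdot\frac{\sigma^n}{\varsigma^\beta\tau^{n+\beta}}$, which, upon using $\varsigma\tau=1+\varsigma\sigma$, is exactly $\fn_{\beta,\varsigma\sigma}(n)=(1+\varsigma\sigma)^{-\beta}(\varsigma\sigma/(1+\varsigma\sigma))^n\fm_\beta(n)$. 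Comparing with $\langle\wit\Lambda_{\beta,\varsigma,\sigma}f,g\rangle_{\fn_{\beta,\varsigma\sigma}}$ and invoking density of $\esFf$ gives $\wit\Lambda_{\beta,\varsigma,\sigma}f(n)=\EE[f({\rm{Gam}}(n+\beta)/(\varsigma^{-1}+\sigma))]$ for all $n\in\ZZ_+$.

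\textbf{Matching the three expressions, and the main difficulty.} By Lemma \ref{lem:tildeL} together with $\wi\Lambda_{\beta,a}=\adjL\, d_{1/a}$ (equivalently $\wi\Lambda_{\beta,a}=a^\beta\Lambda_{\beta,a}^*$, via \eqref{dc*}), one has $\wi\Lambda_{\beta,a}f(n)=\adjL(d_{1/a}f)(n)=\EE[f({\rm{Gam}}(n+\beta)/a)]$; taking $a=\varsigma^{-1}+\sigma$ yields at once the first two claimed equalities, $\wit\Lambda_{\beta,\varsigma,\sigma}=(\varsigma^{-1}+\sigma)^\beta\Lambda_{\beta,\varsigma^{-1}+\sigma}^*=\wi\Lambda_{\beta,\varsigma^{-1}+\sigma}$. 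For the last one it suffices to compose dilations: $d_{1/\sigma}\,d_{\varsigma\sigma/(1+\varsigma\sigma)}=d_{\varsigma/(1+\varsigma\sigma)}=d_{1/(\varsigma^{-1}+\sigma)}$, so $\wi\Lambda_{\beta,\sigma}\,d_{\varsigma\sigma/(1+\varsigma\sigma)}=\adjL\,d_{1/(\varsigma^{-1}+\sigma)}=\wi\Lambda_{\beta,\varsigma^{-1}+\sigma}$. There is no conceptual obstacle here; the only thing that requires genuine care is the bookkeeping of the normalizing constants --- it is precisely the negative-binomial normalization of $\fn_{\beta,\varsigma\sigma}$, encoded in the identity $\varsigma\tau=1+\varsigma\sigma$, that forces the rate of the output Gamma variable to be exactly $\varsigma^{-1}+\sigma$ rather than some other combination of $\varsigma$ and $\sigma$. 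Finally, the ``at least on $\esFf$'' caveat is exactly what trivializes the sum--integral interchange; extending the four identities to all of $\esL^2(\nu_{\beta,\varsigma})$ is then routine from the boundedness (indeed contractivity) of the operators involved.
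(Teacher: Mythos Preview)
Your proof is correct and follows essentially the same approach as the paper's: both compute the adjoint kernel by evaluating the bilinear pairing $\langle f,\Lambda_\sigma g\rangle_{\nu_{\beta,\varsigma}}$ explicitly, arrive at the Gamma$(n+\beta)$ kernel with rate $\varsigma^{-1}+\sigma$, and then obtain the last equality via the dilation rule $\wi\Lambda_{\beta,\sigma}d_\gamma=\wi\Lambda_{\beta,\sigma/\gamma}$. The only organizational difference is that the paper first expresses $\wit\Lambda_{\beta,\varsigma,\sigma}$ abstractly as $(\discret{H}_{\beta,\varsigma\sigma})^{-1}\Lambda_{\beta,\sigma}^{*}H_{\beta,\varsigma}$ (a Radon--Nikodym change between the $(\mu_\beta,\fm_\beta)$ and $(\nu_{\beta,\varsigma},\fn_{\beta,\varsigma\sigma})$ pairings) and then substitutes the already-known kernel of $\Lambda_{\beta,\sigma}^{*}$, whereas you carry out the integral directly from scratch; the computations are the same.
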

\proof
Let $H_{\beta,\varsigma}$ be the density of $\nu_{\beta,\varsigma}$ with respect to $\mu_{\beta}$ and $\discret{H}_{\beta,\varsigma\sigma}$ be the density of $\fn_{\beta,\varsigma\sigma}$ with respect to $\fm_{\beta}$.
The notations $H_{\beta,\varsigma}$ and $\discret{H}_{\beta,\varsigma\sigma}$ will also stand for the multiplication operators by these functions.
We have seen in Subsections \ref{cLp} and \ref{dLp} that these functions are exponential:
 \bq
H_{\beta,\varsigma}\st \RR_+\ni x&\mapsto& \varsigma^{-\beta}\exp(-x/\varsigma)\\
\discret{H}_{\beta,\varsigma\sigma}\st \ZZ_+\ni n&\mapsto&(1+\varsigma\sigma)^{-\beta} \lt(\frac{\varsigma\sigma}{1+\varsigma\sigma}\rt)^n.
\eq
Consider two test functions, say $f\in\esFf$ and $g\in\esPe/H_{\beta,\varsigma}$, we observe that
\bq
\nu_{\beta,\varsigma} g\Lambda_{\sigma}f&=&\mu_{\beta}H_{\beta,\varsigma}g\Lambda_{\sigma}f
=\fm_{\beta}\Lambda_{\beta,\sigma}^*H_{\beta,\varsigma}gf
=\fn_{\beta,\varsigma\sigma}\Lambda_{\beta,\sigma}^* H_{\beta,\varsigma}gf/\discret{H}_{\beta,\varsigma\sigma}
\eq
so that we get \bq\wit \Lambda_{\beta,\varsigma,\sigma}&=&(\discret{H}_{\beta,\varsigma\sigma})^{-1}\Lambda_{\beta,\sigma}^*H_{\beta,\varsigma}.\eq
Recalling that $\Lambda_{\beta,\sigma}^*$ and $\wi \Lambda_{\beta,\sigma}$ are described by the kernels
\begin{equation}\label{eq:def_Ladj}
\fo n\in\ZZ_+,\,\fo x\in(0,\iy ),\qquad\lt\{
\begin{array}{rcl}
\Lambda_{\beta,\sigma}^*(n,dx)&=& \sigma^n\frac{x^{n+\beta-1}}{\Gamma(n+\beta)}\exp(-\sigma x)\, dx\\
\wi \Lambda_{\beta,\sigma}(n,dx)&=& \sigma^\beta\Lambda_{\beta,\sigma}^*(n,dx),\end{array}\rt.
\end{equation}
we get for any $n\in\ZZ_+$ and $ x\in(0,\iy )$,
\bq
\wit \Lambda_{\beta,\varsigma,\sigma}(n,dx)&=& (1+\varsigma\sigma)^\beta\lt(\frac{\varsigma\sigma}{1+\varsigma\sigma}\rt)^{-n}\sigma^n\frac{x^{n+\beta-1}}{\Gamma(n+\beta)} \exp(-\sigma x)\varsigma^{-\beta}\exp(-x/\varsigma)dx\\
&=&\wi \Lambda_{\beta,\varsigma^{-1}+\sigma}(n,dx).
\eq
The last equality of the above lemma is a consequence of the general relation
\bq
\fo \beta,\sigma,\gamma>0,\qquad \wi \Lambda_{\beta,\sigma}d_\gamma&=&\wi \Lambda_{\beta,\sigma\gamma^{-1}}\eq
which amounts to the change of variable $(0,\iy )\ni x\mapsto \gamma^{-1}x$ in the gamma integrals defining the kernel $ \wi \Lambda_{\beta,\sigma}$ (or formally $\wi \Lambda_{\beta,\sigma}d_\gamma=\Lambda^*d_{1/\sigma}d_\gamma=\Lambda^*d_{\gamma/\sigma}=\wi \Lambda_{\beta,\sigma\gamma^{-1}}$).
\wwtbp

We deduce the following supplementary relations.
\begin{pro} \label{prop:dual-gat}
 For any $\beta,\varsigma,\sigma>0$ and  $t\geq 0$, we have, on  $\esL^2(\nu_{\beta,\varsigma})$, the gateway relation
\bq
\Gate{\dLs_{t}^{(\beta,\varsigma\sigma)}}{\ \wit \Lambda_{\beta,\varsigma,\sigma}\ }{\Ls_{t}^{(\beta,\varsigma)}}\eq
and
\bq
\Lambda_\sigma\wit \Lambda_{\beta,\varsigma,\sigma}&=&\Ls_{\ln(1+1/(\varsigma\sigma))}^{(\beta,\varsigma)}\\
\wit \Lambda_{\beta,\varsigma,\sigma}\Lambda_\sigma&=&\dLs_{\ln(1+1/(\varsigma\sigma))}^{(\beta,\varsigma\sigma)}.
\eq
\end{pro}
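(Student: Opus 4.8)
The plan is to derive all three assertions from the primal gateway relation of Proposition~\ref{laginter}, the self-adjointness of the continuous and discrete Laguerre semi-groups established in Subsections~\ref{cLp} and \ref{dLp}, the product formula $\Lambda_\sigma\wi\Lambda_{\beta,\sigma}=\Bs^{(\beta)}_{1/\sigma}$ of Lemma~\ref{LcLc1}, and the identification $\wit\Lambda_{\beta,\varsigma,\sigma}=\wi\Lambda_{\beta,\sigma}d_{\varsigma\sigma/(1+\varsigma\sigma)}$ from Lemma~\ref{differentsadjoints}. Fix $\beta,\varsigma,\sigma>0$, view $\Lambda_\sigma$ as a norm-$1$ operator from $\esLd^2(\fn_{\beta,\varsigma\sigma})$ to $\esL^2(\nu_{\beta,\varsigma})$ with adjoint $\wit\Lambda_{\beta,\varsigma,\sigma}$ relative to these Hilbert structures, and set $\tau\df\ln(1+1/(\varsigma\sigma))$, so that $e^{-\tau}=\varsigma\sigma/(1+\varsigma\sigma)$, $\varsigma(e^{\tau}-1)=1/\sigma$ and $\varsigma\sigma(e^{\tau}-1)=1$. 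For the gateway relation, Proposition~\ref{laginter} supplies on $\esLd^2(\fn_{\beta,\varsigma\sigma})$ the operator identity $\Ls^{(\beta,\varsigma)}_t\Lambda_\sigma=\Lambda_\sigma\dLs^{(\beta,\varsigma\sigma)}_t$; taking Hilbert-space adjoints and using that $\Ls^{(\beta,\varsigma)}_t$ is self-adjoint on $\esL^2(\nu_{\beta,\varsigma})$ and $\dLs^{(\beta,\varsigma\sigma)}_t$ is self-adjoint on $\esLd^2(\fn_{\beta,\varsigma\sigma})$, together with the fact that the adjoint of $\Lambda_\sigma$ is $\wit\Lambda_{\beta,\varsigma,\sigma}$, yields $\wit\Lambda_{\beta,\varsigma,\sigma}\Ls^{(\beta,\varsigma)}_t=\dLs^{(\beta,\varsigma\sigma)}_t\wit\Lambda_{\beta,\varsigma,\sigma}$ on $\esL^2(\nu_{\beta,\varsigma})$, which is precisely $\Gate{\dLs_{t}^{(\beta,\varsigma\sigma)}}{\wit \Lambda_{\beta,\varsigma,\sigma}}{\Ls_{t}^{(\beta,\varsigma)}}$.

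For the first product, substitute $\wit\Lambda_{\beta,\varsigma,\sigma}=\wi\Lambda_{\beta,\sigma}d_{\varsigma\sigma/(1+\varsigma\sigma)}=\wi\Lambda_{\beta,\sigma}d_{e^{-\tau}}$ (Lemma~\ref{differentsadjoints}) and invoke Lemma~\ref{LcLc1}:
\[
\Lambda_\sigma\wit\Lambda_{\beta,\varsigma,\sigma}=\bigl(\Lambda_\sigma\wi\Lambda_{\beta,\sigma}\bigr)d_{e^{-\tau}}=\Bs^{(\beta)}_{1/\sigma}d_{e^{-\tau}}.
\]
On the other hand, by the very definition of the continuous Laguerre semi-group, $\Ls^{(\beta,\varsigma)}_\tau=S^{(\beta)}_{(\varsigma(e^{\tau}-1),e^{-\tau})}=Q^{(\beta)}_{\varsigma(e^{\tau}-1)}d_{e^{-\tau}}=\Bs^{(\beta)}_{1/\sigma}d_{e^{-\tau}}$, since $\varsigma(e^{\tau}-1)=1/\sigma$; hence $\Lambda_\sigma\wit\Lambda_{\beta,\varsigma,\sigma}=\Ls^{(\beta,\varsigma)}_{\tau}$. (Equivalently one may write $\Lambda_\sigma=d_\sigma\Lambda$ and $\wit\Lambda_{\beta,\varsigma,\sigma}=\wi\Lambda_{\beta,\varsigma^{-1}+\sigma}=\adjL d_{1/(\varsigma^{-1}+\sigma)}$, use $\Lambda\adjL=\Bs^{(\beta)}_1$ from Proposition~\ref{LL*}, and then apply the scaling relation \eqref{dcP} to reach the same conclusion.)

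For the second product, set $R\df\wit\Lambda_{\beta,\varsigma,\sigma}\Lambda_\sigma$. Using the identity just proved and then the gateway relation of Proposition~\ref{laginter} at time $\tau$,
\[
\Lambda_\sigma R=\bigl(\Lambda_\sigma\wit\Lambda_{\beta,\varsigma,\sigma}\bigr)\Lambda_\sigma=\Ls^{(\beta,\varsigma)}_{\tau}\Lambda_\sigma=\Lambda_\sigma\dLs^{(\beta,\varsigma\sigma)}_{\tau},
\]
so that $\Lambda_\sigma\bigl(R-\dLs^{(\beta,\varsigma\sigma)}_{\tau}\bigr)=0$; since $\sigma^{\beta/2}d_\sigma$ is an $\esL^2$-isometry and $\Lambda$ is injective (Lemma~\ref{Linj}), $\Lambda_\sigma$ is injective, whence $R=\dLs^{(\beta,\varsigma\sigma)}_{\tau}$, as required. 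There is no genuine analytic obstacle: each displayed identity is legitimately read either as an identity of Markov kernels or as an identity of norm-$1$ operators between the relevant weighted $\esL^2$-spaces, so passing to adjoints, composing kernels, and extending from cores such as $\esFf$ or $\esc_0(\ZZ_+)$ to the full Hilbert spaces by density all go through; the only real work is the elementary bookkeeping that fixes $\tau=\ln(1+1/(\varsigma\sigma))$ and checks $\varsigma(e^{\tau}-1)=1/\sigma$, $\varsigma\sigma(e^{\tau}-1)=1$ and $e^{-\tau}=\varsigma\sigma/(1+\varsigma\sigma)$.
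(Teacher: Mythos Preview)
Your proof is correct. The dual gateway relation and the first product $\Lambda_\sigma\wit\Lambda_{\beta,\varsigma,\sigma}=\Ls^{(\beta,\varsigma)}_\tau$ are obtained exactly as in the paper: adjoints plus self-adjointness for the former, Lemma~\ref{differentsadjoints} combined with Lemma~\ref{LcLc1} for the latter. For the second product $\wit\Lambda_{\beta,\varsigma,\sigma}\Lambda_\sigma$ you diverge slightly: the paper computes directly, writing $\wit\Lambda_{\beta,\varsigma,\sigma}\Lambda_\sigma=\wi\Lambda_{\beta,\sigma}d_{e^{-\tau}}\Lambda_\sigma=\wi\Lambda_{\beta,\sigma}\Lambda_\sigma\discret{D}_{e^{-\tau}}=\dBs^{(\beta)}_1\discret{D}_{e^{-\tau}}=\dLs^{(\beta,\varsigma\sigma)}_\tau$ via the commutation $d_v\Lambda_\sigma=\Lambda_\sigma\discret{D}_v$ and Lemma~\ref{LcLc}. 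You instead bootstrap from the first product and the primal intertwining of Proposition~\ref{laginter}, then invoke injectivity of $\Lambda_\sigma$. This is precisely the trick the paper used to \emph{prove} Lemma~\ref{LcLc}, so the two routes are equivalent in content; yours avoids citing Lemma~\ref{LcLc} separately, while the paper's direct computation makes the factorization $\dBs^{(\beta)}_1\discret{D}_{e^{-\tau}}$ visible.
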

Remark that the time $\ln(1+1/(\varsigma\sigma))$  is the same in the  right-hand side of the two last identities. By comparing  Lemmas
\ref{LcLc1} and \ref{LcLc}, we observe that  differs from the Bessel case.
\proof
Passing to the adjoint operators in the second  intertwining relation of Proposition \ref{laginter}, we get, on $\esL^2(\nu_{\beta,\varsigma})$,
\bq
\Gate{\dLs_{t}^{(\beta,\varsigma\sigma)*}}{\ \wit \Lambda_{\beta,\varsigma,\sigma}\ }{\Ls_{t}^{(\beta,\varsigma)*}}\eq
which is the first announced result, by self-adjointness of $\dLs_{t}^{(\beta,\varsigma\sigma)}$ and $\Ls_{t}^{(\beta,\varsigma)}$.
Taking into account Lemmas  \ref{differentsadjoints} and \ref{LcLc1}, we obtain
\bq
\Lambda_\sigma\wit \Lambda_{\beta,\varsigma,\sigma}&=&\Lambda_\sigma\wi \Lambda_{\beta,\sigma}d_{\varsigma\sigma/(1+\varsigma\sigma)}\\
&=&Q_{1/\sigma}^{(\beta)}d_{\varsigma\sigma/(1+\varsigma\sigma)}.\eq
Defining $t\df\ln(1+1/(\varsigma\sigma))$
so that
$ (\varsigma(e^t-1),e^{-t})=(1/\sigma,\varsigma\sigma/(1+\varsigma\sigma))$, it follows that
\bq
Q_{1/\sigma}^{(\beta)}d_{\varsigma\sigma/(1+\varsigma\sigma)}&=&K^{(\beta,\varsigma)}_{\ln(1+1/(\varsigma\sigma))}.\eq
Similarly, rather taking into account Lemma \ref{LcLc}, we compute that
\bq
\wit \Lambda_{\beta,\varsigma,\sigma}\Lambda_\sigma&=&\wi \Lambda_{\beta,\sigma}d_{\varsigma\sigma/(1+\varsigma\sigma)}\Lambda_\sigma\\
&=&\wi \Lambda_{\beta,\sigma}\Lambda_\sigma\discret{D}_{\varsigma\sigma/(1+\varsigma\sigma)}\\
&=&\dBs^{(\beta)}_1\discret{D}_{\varsigma\sigma/(1+\varsigma\sigma)}\\
&=&\dLs^{(\beta,\varsigma\sigma)}_{\ln(1+1/(\varsigma\sigma))}.
\eq
\wwtbp

The above relations are summarized in  the  Figure \ref{fig4}, valid for any $\beta,\varsigma,\sigma>0$ and $t\geq 0$, which is the Laguerre analogue of Figure \ref{fig3} in the Bessel setting.
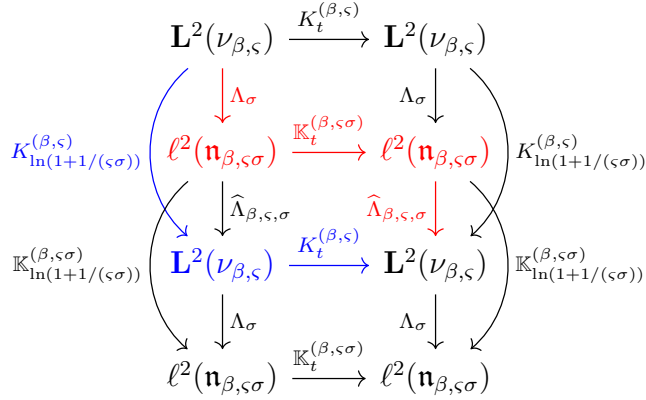
\begin{figure}[!h]\centering
\begin{tikzcd}[scale=2]
\textcolor{black}{\esL^2(\nu_{\beta,\varsigma}) }\arrow[r,  "\Ls^{(\beta,\varsigma)}_t"] \arrow[d, color=red,"\Lambda_{\sigma}"] \arrow[dd, bend right=50 , color=blue, "\Ls_{\ln(1+1/(\varsigma\sigma))}^{(\beta,\varsigma)}"']
& \esL^2(\nu_{\beta,\varsigma}) \arrow[d, "\Lambda_{\sigma}"' ]  \arrow[dd, bend left=50 , "\Ls_{\ln(1+1/(\varsigma\sigma))}^{(\beta,\varsigma)}"]\\
\textcolor{red}{\esLd^2(\fn_{\beta,\varsigma\sigma})} \arrow[r,color=red, "\dLs^{(\beta,\varsigma\sigma)}_{t}" ]\arrow[d, "\wit\Lambda_{\beta,\varsigma, \sigma}"]\arrow[dd, bend right=50 , "\dLs_{\ln(1+1/(\varsigma\sigma))}^{(\beta,\varsigma\sigma)}"']
& \textcolor{red}{\esLd^2(\fn_{\beta,\varsigma\sigma})} \arrow[d, color=red,"\wit\Lambda_{\beta,\varsigma,\sigma}"' ]\arrow[dd, bend left=50 , "\dLs_{\ln(1+1/(\varsigma\sigma))}^{(\beta,\varsigma\sigma)}"]\\
\textcolor{blue}{\esL^2(\nu_{\beta,\varsigma})} \arrow[r,color=blue,  "\Ls^{(\beta,\varsigma)}_t"] \arrow[d, "\Lambda_{\sigma}" ]
& \textcolor{black}{\esL^2(\nu_{\beta,\varsigma})} \arrow[d, "\Lambda_{\sigma}"' ]\\
\esLd^2(\fn_{\beta,\varsigma\sigma}) \arrow[r, "\dLs^{(\beta,\varsigma\sigma)}_{t}"]
& \esLd^2(\fn_{\beta,\varsigma\sigma})
\end{tikzcd}
\caption{Laguerre intertwining relations}\label{fig4}
\end{figure}\par
\sm
The  intertwining relations displayed in Figure \ref{fig4} admit three probabilistic consequences.
To state them, for any $\beta,\sigma>0$,
let $X^{(\beta,\sigma)}\df(X_t^{(\beta,\sigma)})_{t\geq 0}$ (respectively $\fX^{(\beta,\sigma)}\df(\fX_t^{(\beta,\sigma)})_{t\geq 0}$) be a Markov process associated to the generator $\Lg_{\beta,\sigma}$ (resp.\ $\dLg_{\beta,\sigma}$).\par\sm
$\bullet$ \textbf{Simulation}: for any $\beta,\sigma,\varsigma>0,\,t\geq 0$ and $x\in \RR_+$ (resp.\ $n\in\ZZ_+$), the random variable $X^{(\beta,\varsigma)}_{\ln(1+1/(\sigma\varsigma))+t}$ (resp.\ $\fX^{(\beta,\varsigma\sigma)}_{\ln(1+1/(\sigma\varsigma))+t}$) can be simulated in the following way,
when $X^{(\beta,\varsigma)}$ (resp.\ $\fX^{(\beta,\varsigma\sigma)}$) is starting from $x$ (resp.\ $n$).
First sample $n$ (resp.\ $x$) under the probability $\Lambda_\sigma(x,\cdot)$ (resp.\ $\wit\Lambda_{\beta,\varsigma,\sigma}(n,\cdot)$), next simulate $\fX^{(\beta,\varsigma\sigma)}_t$
(resp.\ $X^{(\beta,\varsigma)}_t$) starting from $n$ (resp.\ $x$),
and finally get  $X^{(\beta,\varsigma)}_{\ln(1+1/(\sigma\varsigma))+t}$ (resp.\ $\fX^{(\beta,\varsigma\sigma)}_{\ln(1+1/(\sigma\varsigma))+t}$)
by sampling with respect to $\wit\Lambda_{\beta,\varsigma,\sigma}(\fX^{(\beta,\varsigma\sigma)}_t,\cdot)$ (resp.\ $\Lambda_\sigma(X^{(\beta,\varsigma)}_t,\cdot)$).
This assertion is an immediate generalization of the observation made before Remark \ref{Makarov} and corresponds to the commutation of the paths in blue and red in Figure \ref{fig4}.
\par\sm
$\bullet$ \textbf{Approximation}:
for large $\sigma>0$,
 the birth and death process $(\fX_t^{(\beta,\sigma\varsigma)})_{t\geq 0}$ provides a convenient approximation of $(X_t^{(\beta,\varsigma)})_{t\geq 0}$, up to natural scalings,
 as in Subsection~\ref{altbi}.
 Indeed, we have for any bounded and continuous  function $f\st\RR_+\ri\RR$, $x\in\RR_+$ and $t\geq 0$,
\bq
 \Lambda_\sigma\dLs_t^{(\beta,\varsigma\sigma)}\wit\Lambda_{\beta,\varsigma,\sigma} f(x)&=&
 \Ls_t^{(\beta,\varsigma)}\Lambda_\sigma\wit\Lambda_{\beta,\varsigma,\sigma} f(x)\\
 &=& \Ls_t^{(\beta,\varsigma)}\Ls_{\ln(1+1/(\varsigma\sigma))}^{(\beta,\varsigma)}f(x)\\
 &=&\Ls_{t+\ln(1+1/(\varsigma\sigma))}^{(\beta,\varsigma)}f(x)\\
 &\xrightarrow{\sigma\ri +\iy}&\Ls_t^{(\beta,\varsigma)}f(x)\eq
 where the last convergence is a consequence of the continuity of the trajectories of the diffusions associated to the semi-group $\Ls^{(\beta,\varsigma)}$.
The convergence
\bq
\lim_{\sigma\ri +\iy} \Lambda_\sigma\dLs_t^{(\beta,\varsigma\sigma)}\wit\Lambda_{\beta,\varsigma,\sigma}f&=&\Ls_t^{(\beta,\varsigma)} f\eq
can also be understood in $\esC_0(\RR_+)$ or in $\esL^2(\nu_{\beta,\varsigma})$ for $f$ in these spaces, by continuity of the corresponding semi-group $\Ls^{(\beta,\varsigma)}$.
The advantage of this
approach to the convergence of discrete approximation (say for a finite number of time-marginal distributions) is that it is very simple in comparison with other methods,
see for instance the original proof of Feller \cite{Feller-Gen}.\par\sm
$\bullet$ \textbf{Speed of convergence}: here we just sketch this application, since it will be investigated in a general Markovian framework in \cite{intertwining}, to which we refer for the proof of Proposition \ref{Bakry} below as a particular case.
\par
Recall that the
entropy of two probability measures $\pi'$ and $\pi$ defined on the same state space is given by
\bq
\Ent(\pi'\vert\pi)&\df&\lt\{
\begin{array}{ll}\di \int \ln\lt(\frac{d\pi'}{d\pi}\rt)\, d\pi'&\hbox{ if $\pi'\ll\pi$}\\
+\iy&\hbox{ otherwise}\end{array}\rt.
\eq
where $d\pi'/d\pi$ stands for the Radon-Nikodym density of $\pi'$ with respect to $\pi$.
\par
For any $\beta,\sigma,\varsigma>0$,
the speed of convergence of $X^{(\beta,\varsigma)}$ (resp.\  $\fX^{(\beta,\varsigma\sigma)}$) toward the equilibrium $\nu_{\beta,\varsigma}$ (resp.\ $\fn_{\beta,\varsigma\sigma}$) in the entropy sense can be deduced from
the corresponding speed for $\fX^{(\beta,\varsigma\sigma)}$ (resp.\ $X^{(\beta,\varsigma)}$).
For instance, taking into account that Bakry \cite{MR1417973} has shown that the logarithmic Sobolev constant of the Laguerre generator $\Lg^{(\beta,\sigma)}$ is equal to 2 for
all $\sigma >0$ and $\beta\geq 1/2$, it is possible to deduce from the three last lines of Figure~\ref{fig4} the following result.
\begin{cor}\label{Bakry}
For any initial probability $m_0$  on  $\ZZ_+$ and for any $\beta\geq 1/2$, $\sigma>0$ and $t\geq 0$, we have
\bq
\Ent( m_0\dLs_t^{(\beta,\sigma)}\vert\fn_{\beta,\sigma})&\leq & \exp(-2[t-\ln(1+1/\sigma)]_+)\Ent( m_0\vert\fn_{\beta,\sigma}).
 \eq
\end{cor}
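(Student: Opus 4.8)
The statement is meant to follow from the intertwining picture of Figure~\ref{fig4} together with Bakry's logarithmic Sobolev inequality, and the argument I would give is the concrete instance of the abstract mechanism recorded in \cite{intertwining}. First I would dispose of the trivial case $\Ent(m_0\mid\fn_{\beta,\sigma})=\iy$. Then I would specialise all the objects of Subsection~\ref{iocadLp} to $\varsigma=1$, so that $\Lambda_\sigma$ is a Markov kernel from $\RR_+$ to $\ZZ_+$ and $\wit\Lambda_{\beta,1,\sigma}$ a Markov kernel from $\ZZ_+$ to $\RR_+$. From Propositions~\ref{laginter} and~\ref{prop:dual-gat}, read at the level of compositions of Markov kernels (the operator identities there, stated on the measure-determining class $\esc_0(\ZZ_+)$ and on the relevant $\esL^2$ spaces, extend to the corresponding kernel compositions), I would use $\Lambda_\sigma\dLs^{(\beta,\sigma)}_s=\Ls^{(\beta,1)}_s\Lambda_\sigma$ for $s\geq 0$ and $\wit\Lambda_{\beta,1,\sigma}\Lambda_\sigma=\dLs^{(\beta,\sigma)}_{t_0}$, where $t_0\df\ln(1+1/\sigma)$. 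Finally I would record the compatibility of the invariant measures: $\Lambda_\sigma$ sends the standard gamma law $\nu_{\beta,1}$ to the negative binomial law $\fn_{\beta,\sigma}$ (a mixed‑Poisson computation, as in Lemma~\ref{lem:lambda_def}), i.e.\ $\nu_{\beta,1}\Lambda_\sigma=\fn_{\beta,\sigma}$, while $\fn_{\beta,\sigma}\wit\Lambda_{\beta,1,\sigma}$ is a probability invariant for $\Ls^{(\beta,1)}$ hence, by irreducibility of its generator, $\fn_{\beta,\sigma}\wit\Lambda_{\beta,1,\sigma}=\nu_{\beta,1}$.

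\textbf{Main step: the case $t\geq t_0$.} Writing $t=t_0+s$ with $s\geq 0$ and setting $\mu_0\df m_0\wit\Lambda_{\beta,1,\sigma}$, the two kernel identities above yield $m_0\dLs^{(\beta,\sigma)}_t=m_0\dLs^{(\beta,\sigma)}_{t_0}\dLs^{(\beta,\sigma)}_s=\mu_0\Ls^{(\beta,1)}_s\Lambda_\sigma$. Then I would chain three elementary facts: the data‑processing inequality (relative entropy does not increase under a Markov kernel), applied once to $\Lambda_\sigma$ and once to $\wit\Lambda_{\beta,1,\sigma}$; the invariance relations $\nu_{\beta,1}\Lambda_\sigma=\fn_{\beta,\sigma}$ and $\fn_{\beta,\sigma}\wit\Lambda_{\beta,1,\sigma}=\nu_{\beta,1}$; and the exponential decay of entropy along the continuous Laguerre semi‑group, $\Ent(\pi\Ls^{(\beta,1)}_s\mid\nu_{\beta,1})\leq e^{-2s}\Ent(\pi\mid\nu_{\beta,1})$ for every probability $\pi$ on $\RR_+$. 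This last inequality is exactly what Bakry's computation \cite{MR1417973} gives: for $\beta\geq 1/2$ the logarithmic Sobolev constant of the Laguerre generator $\Lg_{\beta,\sigma}$ equals $2$ for every $\sigma>0$, in particular for $\Lg_{\beta,1}$ (the precise passage from the logarithmic Sobolev inequality to the entropy decay is the general argument of \cite{intertwining}). Putting these together, $\Ent(m_0\dLs^{(\beta,\sigma)}_t\mid\fn_{\beta,\sigma})=\Ent(\mu_0\Ls^{(\beta,1)}_s\Lambda_\sigma\mid\nu_{\beta,1}\Lambda_\sigma)\leq\Ent(\mu_0\Ls^{(\beta,1)}_s\mid\nu_{\beta,1})\leq e^{-2s}\Ent(\mu_0\mid\nu_{\beta,1})=e^{-2s}\Ent(m_0\wit\Lambda_{\beta,1,\sigma}\mid\fn_{\beta,\sigma}\wit\Lambda_{\beta,1,\sigma})\leq e^{-2s}\Ent(m_0\mid\fn_{\beta,\sigma})$, and in this range $s=[t-\ln(1+1/\sigma)]_+$.

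\textbf{The case $t<t_0$, conclusion and main obstacle.} For $t<t_0$ one has $[t-\ln(1+1/\sigma)]_+=0$, and the asserted bound reduces to $\Ent(m_0\dLs^{(\beta,\sigma)}_t\mid\fn_{\beta,\sigma})\leq\Ent(m_0\mid\fn_{\beta,\sigma})$, which is again the data‑processing inequality for the Markov kernel $\dLs^{(\beta,\sigma)}_t$, using $\fn_{\beta,\sigma}\dLs^{(\beta,\sigma)}_t=\fn_{\beta,\sigma}$. Gluing the two ranges produces the factor $e^{-2[t-\ln(1+1/\sigma)]_+}$ for all $t\geq 0$, which is the claim. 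I do not expect a genuine technical obstacle: the substantive input is imported, namely Bakry's logarithmic Sobolev inequality (this is the only place the restriction $\beta\geq 1/2$ is used) and the gateway relations already established in Propositions~\ref{laginter} and~\ref{prop:dual-gat}. The one point requiring care — and the reason for the case split — is the bookkeeping of the ``warm‑up'' time $t_0$: the identity $\wit\Lambda_{\beta,1,\sigma}\Lambda_\sigma=\dLs^{(\beta,\sigma)}_{t_0}$ means the discrete chain can be realised as a time‑$s$ evolution of the continuous process only after an extra time $t_0$, so that the $e^{-2s}$ contraction becomes effective only for $t\geq t_0$. Full details in this form are deferred to \cite{intertwining}.
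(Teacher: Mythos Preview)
Your argument is correct and is precisely the mechanism the paper has in mind: it explicitly says the bound is deduced ``from the three last lines of Figure~\ref{fig4}'' together with Bakry's logarithmic Sobolev constant, deferring the details to \cite{intertwining}, and those three lines encode exactly the factorisation $\dLs^{(\beta,\sigma)}_{t_0}=\wit\Lambda_{\beta,1,\sigma}\Lambda_\sigma$ and the intertwining $\Ls^{(\beta,1)}_s\Lambda_\sigma=\Lambda_\sigma\dLs^{(\beta,\sigma)}_s$ that you chain with two applications of the data-processing inequality and the continuous entropy decay. Your handling of the warm-up time $t_0=\ln(1+1/\sigma)$ and of the case $t<t_0$ is also the intended one.
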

\par
So
up to waiting a warm-up time $\ln(1+1/\sigma))$,
 we get  an exponential rate of convergence equal to 2,  which the best possible one, since it corresponds to twice the spectral gap of $\dLg_{\beta,\sigma}$
 (see e.g.\ the book of Ané et al.\ \cite{MR2002g:46132}).
 The accuracy of Corollary \ref{Bakry} seems out-of-reach by directly working with the birth-death semi-group $(\dLs_t^{(\beta,\sigma)})_{t\geq 0}$.
Furthermore, the fact that the warm-up time $\ln(1+1/\sigma)$ vanishes as $\sigma$ goes to $+\iy$ is related to the approximation, mentioned above, of $X^{(\beta,1)}$ by $\fX^{(\beta,\sigma)}$ for $\sigma$ large.
\subsection{Spectral decomposition of the discrete Laguerre semi-group} \label{sec:spect_lag}

In this part, we show how the gateway relationship of Theorem  \ref{thm:mainK} can be used to recover the spectral decomposition in a weighted Hilbert space of
the discrete Laguerre semi-group from the semi-group of the continuous one. For sake of simplicity we assume that $\sigma^2=1$ and we denote simply $\Ls^{(\beta)}=\Ls^{(\beta,1)}$. Next, it is well-known, see e.g.~\cite{Bakry_Book}, that, for any $t>0$,  $\Ls_t^{(\beta)}$ is a Hilbert-Schmidt operator in $\esL^2(\nu_{\beta})$ that  admits, for any $f \in  \esL^2(\nu_{\beta})$ the diagonalization
\begin{equation}\label{eq:seK}
\Ls^{(\beta)}_t f(x) = \sum_{k=0}^{\infty}e^{-k t} \mathfrak{c}_{k}(\beta) \langle f, \mathcal{L}^{(\beta)}_k \rangle_{\nu_{\beta}}\:  \mathcal{L}^{(\beta)}_k(x)
\end{equation}
where  the sequence of Laguerre polynomials $(\sqrt{\mathfrak{c}_{k}(\beta)}\mathcal{L}^{(\beta)}_k)_{k\geq 0}$ forms an orthonormal basis in $\esL^2(\nu_{\beta})$ and we recall that
\begin{equation*} 
\mathcal{L}^{(\beta)}_k(x)=\sum_{r=0}^k (-1)^r { k +\beta \choose k-r}  \frac{x^r }{r!}
\end{equation*}
and $\mathfrak{c}_{k}(\beta)=\frac{\Gamma(k+1) \Gamma(\beta+1)}{\Gamma(k+\beta+1)}$.
Moreover,  the spectral theory of reversible Markov semigoups yields, for any $t\geq 0$ and $f\in \Lnu$, the spectral gap estimate
 \begin{equation}\label{eq:inv_Lag}
  \textrm{var}_{\nu_{\beta}}\left(\cont{K}^{(\beta)}_tf\right)\leq e^{-t} \: \textrm{var}_{\nu_{\beta}}\left(f\right)
 \end{equation}
where for a measure $\nu$, $\textrm{var}_{\nu}\left(f\right)=||f-\nu f||_{\L^2(\nu)}$.
We have the following analogue spectral theoretical result of the discrete semi-group.

\begin{proposition}
For all $ g \in  \esLd^2(\fn_{\beta}) $ and $t>0$, we have in $ \esLd^2(\fn_{\beta}) $,
 \begin{equation*}
   \Ks^{(\beta)}_t  g  = \sum_{k=0}^{\infty}e^{-kt}2^{k}\mathfrak{c}_{k}(\beta) \langle  g , \eigL^{(\beta)} \rangle_\meKp \eigL^{(\beta)}
 \end{equation*}
where $
\eigL^{(\beta)}(n)=\sum_{r=0}^{k} (-1)^r  { k \choose r} \frac{\Gamma(n+\beta+r)}{r! \Gamma(n+\beta)}. $
Finally, for any $ g \in  \esLd^2(\fn_{\beta}) $ and $t>0$, we have
\begin{equation*}
  \textrm{var}_{\fn_{\beta}}\left(\discret{K}^{(\beta)}_tg \right)\leq e^{-t} \: \textrm{var}_{\fn_{\beta}}\left(g\right).
 \end{equation*}
\end{proposition}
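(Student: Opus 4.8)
The plan is to produce the discrete spectral expansion by transporting the diffusive one, \eqref{eq:seK}, across the gateway of Theorem~\ref{thm:mainK}. At $\sigma=\varsigma=1$ that relation reads $\Gate{\cont{K}_t^{(\beta)}}{\Lambda}{\Ks^{(\beta)}_t}$ in the $\esL^2$ sense, with $\Lambda\st\esLd^2(\fn_\beta)\ri\esL^2(\nu_\beta)$ a quasi-affinity (Lemma~\ref{lem:lambda_bounded}); by Proposition~\ref{prop:dual-gat} its $\esL^2(\nu_\beta)$-adjoint $\wit\Lambda_{\beta,1,1}\st\esL^2(\nu_\beta)\ri\esLd^2(\fn_\beta)$, computed in Lemma~\ref{differentsadjoints}, satisfies $\Gate{\Ks^{(\beta)}_t}{\wit\Lambda_{\beta,1,1}}{\cont{K}_t^{(\beta)}}$. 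I would take as candidate discrete eigenfunctions the images $\eigL^{(\beta)}:=\na\mathcal{L}^{(\beta)}_k$ of the Laguerre polynomials under the operator $\na$ of \eqref{nalph} (so $\na g(n)=\pa^n(\fe g)(0)$); since the diffusive $e^{-kt}$-eigenspace is one-dimensional, $\eigL^{(\beta)}$ is automatically colinear with $\wit\Lambda_{\beta,1,1}\mathcal{L}^{(\beta)}_k$, which is what I would use for the normalisation.

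First I would check the eigenvalue equation. Writing $\mathcal{L}^{(\beta)}_k=\sum_{s\le k}a_sx^s$, direct expansion of the entire function $\fe\mathcal{L}^{(\beta)}_k$ gives $\eigL^{(\beta)}(n)=\sum_{s\le k}a_s\,n(n-1)\cdots(n-s+1)$, a polynomial of exact degree $k$ in $n$; in particular $\eigL^{(\beta)}\in\esLd^2(\fn_\beta)$ because $\fn_\beta$ has geometric tails, and $\Lambda\eigL^{(\beta)}=\Lambda\na\mathcal{L}^{(\beta)}_k=\mathcal{L}^{(\beta)}_k$. Combining this with the gateway and $\cont{K}^{(\beta)}_t\mathcal{L}^{(\beta)}_k=e^{-kt}\mathcal{L}^{(\beta)}_k$ from \eqref{eq:seK},
\[
\Lambda\big(\Ks^{(\beta)}_t\eigL^{(\beta)}\big)=\cont{K}^{(\beta)}_t\Lambda\eigL^{(\beta)}=e^{-kt}\mathcal{L}^{(\beta)}_k=\Lambda\big(e^{-kt}\eigL^{(\beta)}\big),
\]
so injectivity of $\Lambda$ on $\esLd^2(\fn_\beta)$ (Lemma~\ref{lem:lambda_bounded}) forces $\Ks^{(\beta)}_t\eigL^{(\beta)}=e^{-kt}\eigL^{(\beta)}$; one could equally differentiate and invoke $\na\cont{L}_{\beta,1}=\dLg_{\beta,1}\na$ (Lemma~\ref{lem:inter_xpart}). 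The announced closed form $\eigL^{(\beta)}(n)=\sum_{r=0}^k(-1)^r\binom{k}{r}\Gamma(n+\beta+r)/(r!\,\Gamma(n+\beta))$ I would derive by rewriting $\eigL^{(\beta)}(n)$ — equivalently $\wit\Lambda_{\beta,1,1}\mathcal{L}^{(\beta)}_k(n)=\EE[\mathcal{L}^{(\beta)}_k(\tfrac12\mathrm{Gam}(n+\beta))]$, using $\EE[\mathrm{Gam}(n+\beta)^r]=\Gamma(n+\beta+r)/\Gamma(n+\beta)$ — as a terminating Gaussian hypergeometric series and applying a Chu--Vandermonde/Pfaff transformation, in the manner of the proof of \eqref{eq:LJL}. \textbf{Carrying out this special-function reduction, and fixing the normalisation so that the expansion coefficient comes out as $2^k\mathfrak{c}_k(\beta)$, is the step I expect to require the most bookkeeping.} (Up to scalars the $\eigL^{(\beta)}$ are the Meixner polynomials orthogonal for the negative-binomial weight $\fn_\beta$.)

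Next I would show that $(\eigL^{(\beta)})_{k\ge0}$ is an orthogonal basis of $\esLd^2(\fn_\beta)$. Orthogonality is automatic: $\Ks^{(\beta)}_t$ is a self-adjoint (reversible) contraction on $\esLd^2(\fn_\beta)$ and the $\eigL^{(\beta)}$ lie in the distinct eigenspaces $e^{-kt}$. Completeness holds since $\mathrm{span}\{\eigL^{(\beta)}:k\le K\}$ is exactly the space of polynomials of degree $\le K$ in $n$, and polynomials are dense in $\esLd^2(\fn_\beta)$ because $\fn_\beta$ has geometric tails and is therefore moment-determinate (alternatively, transport the density of $\mathrm{span}\{\mathcal{L}^{(\beta)}_k\}$ in $\esL^2(\nu_\beta)$ through the dense-range operator $\wit\Lambda_{\beta,1,1}$). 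For the normalisation I would use the products of intertwining kernels in Proposition~\ref{prop:dual-gat}: at $\varsigma=\sigma=1$ one has $\ln(1+1/(\varsigma\sigma))=\ln2$, hence $\Lambda\wit\Lambda_{\beta,1,1}=\cont{K}^{(\beta)}_{\ln2}$ and $\wit\Lambda_{\beta,1,1}\Lambda=\Ks^{(\beta)}_{\ln2}$; feeding $\mathcal{L}^{(\beta)}_k$ into the first gives $\Lambda\wit\Lambda_{\beta,1,1}\mathcal{L}^{(\beta)}_k=2^{-k}\mathcal{L}^{(\beta)}_k$, so $\|\wit\Lambda_{\beta,1,1}\mathcal{L}^{(\beta)}_k\|_{\fn_\beta}^2=\langle\mathcal{L}^{(\beta)}_k,\Lambda\wit\Lambda_{\beta,1,1}\mathcal{L}^{(\beta)}_k\rangle_{\nu_\beta}=2^{-k}\|\mathcal{L}^{(\beta)}_k\|_{\nu_\beta}^2=2^{-k}/\mathfrak{c}_k(\beta)$; scaling $\eigL^{(\beta)}$ accordingly makes $(\sqrt{2^k\mathfrak{c}_k(\beta)}\,\eigL^{(\beta)})_{k\ge0}$ an orthonormal system, and one checks (e.g.\ by comparing leading coefficients) that this coincides with the normalisation of the displayed closed form.

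Finally, with an orthonormal basis of eigenvectors of $\Ks^{(\beta)}_t$ having eigenvalues $e^{-kt}$, the spectral theorem for the reversible semi-group gives, for every $g\in\esLd^2(\fn_\beta)$,
\[
\Ks^{(\beta)}_t g=\sum_{k\ge0}e^{-kt}\,2^k\mathfrak{c}_k(\beta)\,\langle g,\eigL^{(\beta)}\rangle_{\fn_\beta}\,\eigL^{(\beta)},
\]
converging in $\esLd^2(\fn_\beta)$. The term $k=0$ is the constant $\eigL^{(\beta)}\equiv1$ and equals $\fn_\beta g$ (the invariance of $\fn_\beta$), whence, with the convention $\textrm{var}_{\fn_\beta}(f)=\|f-\fn_\beta f\|_{\esL^2(\fn_\beta)}$,
\[
\textrm{var}_{\fn_\beta}\big(\Ks^{(\beta)}_tg\big)^2=\sum_{k\ge1}e^{-2kt}\,2^k\mathfrak{c}_k(\beta)\,\big|\langle g,\eigL^{(\beta)}\rangle_{\fn_\beta}\big|^2\le e^{-2t}\,\textrm{var}_{\fn_\beta}(g)^2,
\]
i.e.\ $\textrm{var}_{\fn_\beta}(\Ks^{(\beta)}_tg)\le e^{-t}\textrm{var}_{\fn_\beta}(g)$, the discrete analogue of \eqref{eq:inv_Lag}; equivalently, since by Proposition~\ref{laginter} the gateway lifts to a unitary equivalence between $\Ks^{(\beta)}$ and $\cont{K}^{(\beta)}$, the two are isospectral and the spectral gap $1$ transfers directly.
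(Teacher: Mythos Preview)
Your proof is correct, and the overall strategy---transport the Laguerre eigenstructure through the gateway---is the same as the paper's. The route differs, however. The paper does not verify that $(\eigL^{(\beta)})_{k\ge0}$ is a complete orthogonal system; instead it exploits the factorisation $\Ks^{(\beta)}_{\ln 2}=\wit\Lambda_\beta\Lambda$ together with the adjoint gateway to write, for $t>\ln 2$, $\Ks^{(\beta)}_t g=\wit\Lambda_\beta\,\Ls^{(\beta)}_{t-\ln 2}\,\Lambda g$, inserts the continuous expansion \eqref{eq:seK}, and pushes $\wit\Lambda_\beta$ through term-by-term (this is where the factor $2^k$ arises, and where $\langle\Lambda g,\mathcal{L}^{(\beta)}_k\rangle_{\nu_\beta}=\langle g,\wit\Lambda_\beta\mathcal{L}^{(\beta)}_k\rangle_{\fn_\beta}$ replaces your norm computation). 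The restriction $t>\ln 2$ is then removed by invoking Douglas's theorem to get a unitary equivalence, hence isospectrality and the Hilbert--Schmidt property, and the variance bound is read off that unitary. Your argument is more intrinsic: you prove completeness directly via density of polynomials in $\esLd^2(\fn_\beta)$, so the spectral theorem delivers the expansion for every $t>0$ at once, and the variance estimate falls out of the spectral gap without mentioning $U$. What the paper's route buys is that the closed form and the coefficient $2^k\mathfrak{c}_k(\beta)$ come for free from the factorisation, whereas you still have to reconcile $\na\mathcal{L}^{(\beta)}_k$, $\wit\Lambda_{\beta,1,1}\mathcal{L}^{(\beta)}_k$ and the stated formula (these are only proportional, not equal, so the ``bookkeeping'' you flag is genuinely required); conversely, your route is cleaner in avoiding the time cutoff and the appeal to Douglas.
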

\begin{proof}
First, writing simply $\wit \Lambda_{\beta}=\wit \Lambda_{\beta,1,1}$, we note, from  \eqref{eq:def_Ladj}, that, for any $k,n\in \NN$,
\bqn{eq:Ll} \nonumber \wit \Lambda_{\beta}\mathcal{L}^{(\beta)}_k(n)&=& \int_0^{\infty}\mathcal{L}^{(\beta)}_k(x)\frac{x^{n+\beta-1}}{\Gamma(n+\beta)}\exp(- x)\, dx= \sum_{r=0}^k (-1)^r { k +\beta \choose k-r}  \frac{\Gamma(n+\beta+r)}{r! \Gamma(n+\beta)}\\ &=& \eigL^{(\beta)}(n). \eqn
Next, using  the gateway relationship stated in Proposition \ref{prop:dual-gat} with  $\varsigma=\sigma=1$, that is $\Gate{\dLs_{t}^{(\beta)}}{\ \wit \Lambda_{\beta}\ }{\Ls_{t}^{(\beta)}}$, we get, that for all $k \in \mathbb{N}$,
\begin{equation*}\label{}
   \Ks^{(\beta)}_t \eigL^{(\beta)} = \Ks^{(\beta)}_t \wit \Lambda_{\beta} \mathcal{L}^{(\beta)}_k=  \wit \Lambda_{\beta} \Ls^{(\beta)}_t\mathcal{L}^{(\beta)}_k = e^{- k t } \wit \Lambda_{\beta} \mathcal{L}^{(\beta)}_k= e^{- k t } \eigL^{(\beta)}
\end{equation*}
where we used that the Laguerre polynomials $ \mathcal{L}^{(\beta)}_k$ are eigenfunctions of $\Ls^{(\beta)}_t$ associated to the eigenvalues $e^{-kt}$. Next, since from Proposition \ref{prop:dual-gat}, we have that $\Ks^{(\beta)}_{\ln 2}=\wit \Lambda_{\beta} \Lambda$, the semi-group property of $\Ks^{(\beta)}$ entails that for any $ g \in  \esLd^2(\fn_{\beta}) $ and $t>\ln 2$, \[\Ks^{(\beta)}_t g  =  \Ks^{(\beta)}_{t-\ln 2}\wit \Lambda_{\beta} \Lambda f.\] Thus,  by means again of  the gateway relationship stated in Proposition \ref{prop:dual-gat}, the spectral expansion of $\Ls^{(\beta)}_t $ in \eqref{eq:seK} and the identity \eqref{eq:Ll} combined with the fact that $\wit \Lambda_{\beta}$ is bounded in $\esL^2(\nu_{\beta})$, as the adjoint of a bounded linear operator, we get that, for any $ g \in  \esLd^2(\fn_{\beta}) $ and $t>\ln 2$,
\begin{eqnarray*}
 \Ks^{(\beta)}_t g  &=&   \wit \Lambda_{\beta} \Ls^{(\beta)}_{t-\ln 2}  \Lambda g \\&=& \sum_{k=0}^{\infty}e^{-k t} 2^{k}\mathfrak{c}_{k}(\beta) \langle  \Lambda g, \mathcal{L}^{(\beta)}_k \rangle_{\nu_{\beta}}\:  \eigL^{(\beta)}\\&=& \sum_{k=0}^{\infty}e^{-k t} 2^{k}\mathfrak{c}_{k}(\beta) \langle g, \eigL^{(\beta)} \rangle_{\fn_{\beta}}\:  \eigL^{(\beta)}
\end{eqnarray*}
where for the last line we used another time the identity \eqref{eq:Ll}. To get the eigenvalues expansion for all $t>0$,  we first note that the Stirling formula yields that for $n$ large, $\fn_\beta(n)= 2^{-n-\beta}\frac{\Gamma(\beta+n)}{n!\Gamma(\beta)}\sim C n^{\beta} e^{-n \ln 2}, C>0,$ and thus we deduce from Lemma \ref{lem:lambda_bounded} that $\Lambda : \ell^2({\fn_{\beta}}) \mapsto \esL^2(\nu_{\beta})$  is a quasi-affinity.  Hence, according to Douglas \cite{Douglas}, the gateway relationship \eqref{eq:gateway-lag} between the two self-adjoint Laguerre operators can be lifted to a unitary equivalence between these semi-groups, that is there exists an unitary operator $U: \esL^2(\nu_{\beta})\mapsto  \ell^2(\fn_{\beta})$ such that \begin{equation} \label{eq:U}
\Gate{\dLs_{t}^{(\beta)}}{U}{\Ls_{t}^{(\beta)}}.
\end{equation} This entails that for all $t>0$, $\Ks^{(\beta)}_t$ and $\Ls^{(\beta)}_t$ are isospectral and the former is also a  Hilbert-Schmidt operator in  $\esLd^2(\fn_{\beta})$. The semi-group property of $\Ks^{(\beta)}$ provides the spectral expansion for all $t>0$. To conclude the proof, we note, from \eqref{eq:U}, that for all $ g \in  \esLd^2(\fn_{\beta}) $, $\fn_{\beta} g=\nu_{\beta}U g$
 and hence for all $t>0$
\begin{eqnarray*}
 ||\Ks^{(\beta)}_tg-\fn_{\beta} g||_{\ell^2(\fn_{\beta})} &=& ||U^{-1}\Ls^{(\beta)}_tUg-\nu_{\beta}U g||_{\ell^2(\fn_{\beta})} =  ||\Ls^{(\beta)}_tUg-\nu_{\beta}U g||_{\esL^2(\nu_{\beta})} \\
  &\leq & e^{-t} ||Ug-\fn_{\beta} g||_{\esL^2(\nu_{\beta})} =e^{-t} ||g-\fn_{\beta} g||_{\ell^2(\fn_{\beta})}
\end{eqnarray*}
which completes the proof.
\end{proof}


 \bibliographystyle{plain}

\appendix
\section{On the discrete contraction operators in $\esL^2$}\label{otdco}

 In analogy with the usual family $(d_\sigma)_{\sigma\in[0,1]}$,
the natural properties of the discrete contraction operators $(\discret{D}_\sigma)_{\sigma\in[0,1]}$ are presented here
in $\esLd^2(\fm_{\beta})$. In this appendix
the parameter $\beta>0$ is fixed and is consequently dropped from the notations, except when it is explicitly required by some expressions.
Follow some preliminary informations
 about these discrete contraction operators.
 \begin{lem}\label{dualdD}
 For any $\sigma\in [0,1]$, the operator $\discret{D}_{\sigma}$ is continuous on $\esL^2(\fm)$ and its operator norm is bounded above by ${\sigma}^{-\beta}$.
 Let $\discret{D}_{\sigma}^*$ be the dual operator of $\discret{D}_{\sigma}$ in $\LL^2(\fm)$,
 its kernel is given by
 \bq
 \fo m,n\in\ZZ_+,\qquad
 \discret{D}_{\sigma}^*(m,n)&=&{\sigma}^{-\beta}\binom{m+\beta-1}{m}\mathrm{NB}_{m+\beta,1-{\sigma}}(n-m)\eq
where $\mathrm{NB}_{m+\beta,1-{\sigma}}$ is the negative binomial distribution of parameters $m+\beta>0$ and $1-{\sigma}\in[0,1]$ (in particular  $\discret{D}_{\sigma}^*(m,n)=0$ when $n<m$)
and where by convention $\binom{m+\beta-1}{m}\df \Gamma(m+\beta)/(\Gamma(m+1)\Gamma(\beta))$, even when $\beta>0$ is not an integer number.\par
Furthermore, we have the following multiplicative semi-group property
\bq
\fo \sigma,{\sigma}'\in[0,1],\qquad \discret{D}_{\sigma}\discret{D}_{{\sigma}'}&=&\discret{D}_{{\sigma}{\sigma}'}.\eq
 \end{lem}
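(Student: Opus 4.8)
The plan is to route everything through the single identity $\Lambda\discret{D}_\sigma=\cont{d}_\sigma\Lambda$ of Lemma~\ref{lem:dLamD} together with the transport property $\mu_\beta\Lambda=\fm_\beta$ of Lemma~\ref{Linj}, except for the explicit dual kernel which is a direct computation. Throughout I take $\sigma\in(0,1]$; at $\sigma=0$ the operator $\discret{D}_0$ sends $f$ to the constant $f(0)$, so it is not bounded on $\esLd^2(\fm_\beta)$ and the stated bound $\sigma^{-\beta}=+\infty$ is vacuous, while the semi-group identity with $\sigma=0$ or $\sigma'=0$ is immediate from Markovianity ($\discret{D}_\sigma$ preserves constants).

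First I would establish the eigenmeasure relation $\fm_\beta\discret{D}_\sigma=\sigma^{-\beta}\fm_\beta$. Its continuous analogue $\mu_\beta\cont{d}_\sigma=\sigma^{-\beta}\mu_\beta$ is the change of variables $x\mapsto\sigma x$ in $\mu_\beta(dx)=x^{\beta-1}dx/\Gamma(\beta)$, and pushing it through $\Lambda$ gives
\[
\fm_\beta\discret{D}_\sigma=\mu_\beta\Lambda\discret{D}_\sigma=\mu_\beta\cont{d}_\sigma\Lambda=\sigma^{-\beta}\mu_\beta\Lambda=\sigma^{-\beta}\fm_\beta .
\]
(Equivalently this is the negative-binomial summation $\sum_{j\ge0}\binom{m+j+\beta-1}{j}(1-\sigma)^j=\sigma^{-(m+\beta)}$, which is anyway what guarantees the convergence of the sums appearing below.) Continuity of $\discret{D}_\sigma$ then follows by combining this with the Markovianity of $\discret{D}_\sigma$ on $[0,1]$: Cauchy--Schwarz (or Jensen) gives $(\discret{D}_\sigma f)^2\le\discret{D}_\sigma(f^2)$ pointwise, hence $\|\discret{D}_\sigma f\|_{\esLd^2(\fm_\beta)}^2\le(\fm_\beta\discret{D}_\sigma)(f^2)=\sigma^{-\beta}\|f\|_{\esLd^2(\fm_\beta)}^2$, so $\|\discret{D}_\sigma\|\le\sigma^{-\beta/2}\le\sigma^{-\beta}$ since $\sigma\le1$, which is even sharper than the announced bound.

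The semi-group property I would get the same way: on $\esc_0(\ZZ_+)$, $\Lambda\discret{D}_\sigma\discret{D}_{\sigma'}=\cont{d}_\sigma\Lambda\discret{D}_{\sigma'}=\cont{d}_\sigma\cont{d}_{\sigma'}\Lambda=\cont{d}_{\sigma\sigma'}\Lambda=\Lambda\discret{D}_{\sigma\sigma'}$, and the injectivity of $\Lambda$ from Lemma~\ref{Linj} yields $\discret{D}_\sigma\discret{D}_{\sigma'}=\discret{D}_{\sigma\sigma'}$; since the rows of both kernels are finitely supported, testing against the indicators $\un_{\{m\}}$ upgrades this to the stated equality of kernels. (This is also the elementary identity $\sum_k\binom nk\binom km\sigma^k(1-\sigma)^{n-k}(\sigma')^m(1-\sigma')^{k-m}=\binom nm(\sigma\sigma')^m(1-\sigma\sigma')^{n-m}$, obtained from $\binom nk\binom km=\binom nm\binom{n-m}{k-m}$ and the binomial theorem.)

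Finally, for the dual kernel I would compute the adjoint directly with respect to $\langle\cdot,\cdot\rangle_{\fm_\beta}$: writing out $\langle\discret{D}_\sigma f,g\rangle_{\fm_\beta}$ and exchanging the (finite) sums exhibits $\discret{D}_\sigma^*$ with kernel a constant multiple of $\fm_\beta(n)\binom nm\sigma^m(1-\sigma)^{n-m}$ on $\{n\ge m\}$; using $\fm_\beta(k)=\Gamma(k+\beta)/(\Gamma(\beta)k!)$ and regrouping the Gamma factors rewrites this as $\sigma^{-\beta}\binom{m+\beta-1}{m}$ times the negative-binomial weight $\mathrm{NB}_{m+\beta,1-\sigma}(n-m)=\binom{n+\beta-1}{n-m}\sigma^{m+\beta}(1-\sigma)^{n-m}$, which is the asserted form (in particular it vanishes for $n<m$). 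The only genuine work in the whole lemma is this bookkeeping of Gamma-function identities and keeping straight the normalisation convention for the kernel; everything else is formal, the interchanges of summation being legitimate because all the sums involved are either finite or the absolutely convergent negative-binomial series used above.
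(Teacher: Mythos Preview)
Your proposal is correct and follows essentially the same approach as the paper: the norm bound via Jensen on the Markov kernel combined with $\mu_\beta\cont{d}_\sigma=\sigma^{-\beta}\mu_\beta$ transported through $\Lambda$, the dual kernel by unwinding $\langle\discret{D}_\sigma f,g\rangle_{\fm_\beta}$ and regrouping Gamma factors, and the semi-group identity by sandwiching with $\Lambda$ and invoking its injectivity. Your observation that the argument actually yields the sharper bound $\sigma^{-\beta/2}$ is also what the paper's computation gives (it states $\sigma^{-\beta}$ but proves $\|\discret{D}_\sigma f\|^2\le\sigma^{-\beta}\|f\|^2$), and your side remark on the degenerate $\sigma=0$ case and the alternative combinatorial proof of the semi-group property are welcome additions not present in the paper.
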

 \proof
 For the first assertion, fix $f\in\esL^2(\fm)$.
Taking into account that $\discret{D}_{\sigma}$ is a Markov kernel, we have
 \bq
 \fm(\discret{D}_{\sigma}f)^2&\leq & \fm\discret{D}_{\sigma}f^2
 =(\mu \Lambda)\discret{D}_{\sigma}f^2
 =\mu \Lambda \discret{D}_{\sigma}f^2
 =\mu d_{\sigma}\Lambda f^2
 \leq  {\sigma}^{-\beta}\mu \Lambda f^2
 ={\sigma}^{-\beta} \fm f^2 \eq
 where we used Lemma \ref{Linj} and \ref{lem:dLamD} and the fact that ${\sigma}^{\beta}d_{\sigma}$ is an isometry of $\esL^1(\mu)=\{f :\RR_+\mapsto \R \: \textrm{ measurable with } \int_{0}^{\infty}|f(x)| \mu(dx)<\infty\}$.
 \par
 Concerning the second assertion, fix two functions $f,g\in\esL^2(\fm)$.
 By definition of the dual operator $\discret{D}_{\sigma}^*$, we have
 \bq
 \fm g \discret{D}_{\sigma}^* f&=&\fm f\discret{D}_{\sigma} g\\
 &=&\sum_{n\in\ZZ_+}\fm(n)f(n)\sum_{m=0}^n \binom{n}{m} {\sigma}^m(1-{\sigma})^{n-m} g(m)\\
 &=&\sum_{m\in\ZZ_+}g(m)\sum_{n\geq m}\fm(n)\binom{n}{m} {\sigma}^m(1-{\sigma})^{n-m}f(n).\eq
 Since this holds for any $f,g\in\esL^2(\fm)$, we deduce that the kernel corresponding to $\discret{D}_{\sigma}^*$ is given
 by
 \bq
 \fo m,n\in\ZZ_+,\qquad
 \discret{D}_{\sigma}^*(m,n)&=&\fm(n)\binom{n}{m} {\sigma}^m(1-{\sigma})^{n-m}\\
 &=&\frac{\Gamma(n+\beta)}{n!\Gamma(\beta)}\frac{n!}{(n-m)! m!}{\sigma}^m(1-{\sigma})^{n-m}\\
 &=&\frac{\Gamma(m+\beta){\sigma}^m}{m!\Gamma(\beta){\sigma}^{m+\beta}}\frac{\Gamma(m+\beta+n-m)}{(n-m)!\Gamma(m+\beta)}(1-{\sigma})^{n-m}{\sigma}^{m+\beta}\\
 &=&{\sigma}^{-\beta}\binom{m+\beta-1}{m}\mathrm{NB}_{m+\beta,1-{\sigma}}(n-m)
 \eq
 recalling that for any parameters $r>0$ and $p\in(0,1)$, the negative binomial distribution $\mathrm{NB}_{r,p}$ is defined
 by
 \bq
 \fo n\in\ZZ_+,\qquad \mathrm{NB}_{r,p}(n)&\df& \binom{r+n-1}{n}p^n(1-p)^{r}.\eq
 \par
 Concerning the third assertion, let be given ${\sigma},{\sigma}'\in[0,1]$ and consider the product $\discret{D}_{\sigma}\discret{D}_{{\sigma}'}$, which has a meaning in $\cB(\esL^2(\fm))$
 according to the first point of this proof.
 Taking into account Lemma \ref{lem:dLamD}, we have
 \bq
 \Lambda \discret{D}_{\sigma}\discret{D}_{{\sigma}'}&=&d_{\sigma}\Lambda \discret{D}_{{\sigma}'}\\
 &=&d_{\sigma}d_{{\sigma}'}\Lambda\\
 &=&d_{{\sigma}{\sigma}'}\Lambda\\
 &=&\Lambda\discret{D}_{{\sigma}{\sigma}'}\eq
 and the injectivity property proved in Lemma \ref{Linj} enables us to conclude that $\discret{D}_{\sigma}\discret{D}_{{\sigma}'}=\discret{D}_{{\sigma}{\sigma}'}$.
 \wwtbp
 \par
 To get an additive semi-group, we rather consider  $(\discret{D}_{e^{-t}})_{t\geq 0}$.
 The next result computes its generator in $\esL^2(\fm)$.
\begin{lem}\label{appl2}
The  semi-group $(\discret{D}_{e^{-t}})_{t\geq 0}$ is continuous in $\esL^2(\fm)$ and its generator $\discret{D}$
acts on the core $\esFf$ via, for $n\in \ZZ_+$,
\bqn{dD}
\discret{D}f(n)&=& n\pa_-f(n)\\
&\df& n (f(n-1)-f(n))\eqn
(this term being 0 when $n=0$).
\end{lem}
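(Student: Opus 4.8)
The plan is to establish, in order, strong continuity of $(\discret{D}_{e^{-t}})_{t\geq0}$ on $\esL^2(\fm)$, then the action of its generator on $\esFf$, and finally that $\esFf$ is a core. The additive semi-group property $\discret{D}_{e^{-s}}\discret{D}_{e^{-t}}=\discret{D}_{e^{-(s+t)}}$ and the operator bound $\|\discret{D}_{e^{-t}}\|_{\esL^2(\fm)\to\esL^2(\fm)}\leq e^{\beta t}$ are already contained in Lemma \ref{dualdD}, so the operators are uniformly bounded for $t$ in compact sets. Since $\esFf$ is dense in $\esL^2(\fm)$ (see the proof of Lemma \ref{lem:G_L2}), strong continuity at $0_+$ reduces to checking $\discret{D}_{e^{-t}}\un_{\{k\}}\to\un_{\{k\}}$ in $\esL^2(\fm)$ for each $k$; this follows from $\discret{D}_{e^{-t}}\un_{\{k\}}(n)=\binom{n}{k}e^{-tk}(1-e^{-t})^{n-k}$ for $n\geq k$ (and $0$ otherwise): the value at $n=k$ is $e^{-tk}\to1$, and for $t\leq\ln2$ the remaining mass is bounded by $(1-e^{-t})^{2}\sum_{n>k}\fm(n)\binom{n}{k}^{2}2^{-2(n-k-1)}$, a fixed finite constant times $(1-e^{-t})^{2}\to0$ (polynomial growth of $\fm(n)\binom{n}{k}^2$ against geometric decay). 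The semi-group property then promotes continuity to all $t\geq0$.

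For the generator, fix $f\in\esFf$ supported on $\{0,\dots,N\}$ and write
\[
\discret{D}_{e^{-t}}f(n)=\sum_{m=0}^{n\wedge N} f(m)\binom{n}{m}e^{-tm}(1-e^{-t})^{n-m}.
\]
I would separate the contributions $m=n$, $m=n-1$, and $m\leq n-2$. With $1-e^{-t}=t+O(t^{2})$, the $m=n$ term equals $f(n)(1-tn)+O(t^{2})$, the $m=n-1$ term equals $t\,n f(n-1)+O(t^{2})$, and each $m\leq n-2$ term carries a factor $(1-e^{-t})^{n-m}=O(t^{2})$; for $n\leq N+1$ there are only finitely many terms, so
\[
\discret{D}_{e^{-t}}f(n)=f(n)+t\,n\bigl(f(n-1)-f(n)\bigr)+O(t^{2}),
\]
the $O(t^{2})$ being uniform over the finite set $\{0,\dots,N+1\}$. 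For $n>N+1$ one has $f(n)=f(n-1)=0$ and every summand has $(1-e^{-t})^{n-m}$ with $n-m\geq2$, hence $|\discret{D}_{e^{-t}}f(n)|\leq t^{2}\sum_{m=0}^{N}|f(m)|\binom{n}{m}2^{-(n-m-2)}$ for $t\leq\ln2$; squaring, multiplying by $\fm(n)$ (which is $\sim c_\beta n^{\beta-1}$) and summing over $n>N+1$ gives $\bigl\|\un_{\{n>N+1\}}\,t^{-1}\discret{D}_{e^{-t}}f\bigr\|_{\esL^2(\fm)}^{2}\leq t^{2}C_{f}$ with $C_f<\infty$. Adding the two contributions yields $t^{-1}\bigl(\discret{D}_{e^{-t}}f-f\bigr)\to n\pa_-f$ in $\esL^2(\fm)$, so $\esFf$ lies in the domain of the generator $\discret{D}$ and there $\discret{D}f=n\pa_-f$, which is \eqref{dD}. (The same identity can alternatively be read off from Lemma \ref{lem:dLamD}, the known generator $-x\pa$ of $(d_{e^{-t}})$ on $\esL^2(\mu_\beta)$, and the operatorial identity $\na D=\discret{D}\na$ of Lemma \ref{lem:inter_xpart}, but transferring convergence across the non-invertible $\Lambda$ still requires a uniform bound on the difference quotients.)

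It remains to check that $\esFf$ is a core for $\discret{D}$. Fix $\lambda>\beta$, so $\lambda$ lies in the resolvent set (the growth bound is at most $\beta$); it then suffices to show $(\lambda-\discret{D})(\esFf)=(\lambda-n\pa_-)(\esFf)$ is dense in $\esL^2(\fm)$. If $h\in\esL^2(\fm)$ is orthogonal to this subspace, testing against $\un_{\{k\}}$ and using $(\lambda-n\pa_-)\un_{\{k\}}=(\lambda+k)\un_{\{k\}}-(k+1)\un_{\{k+1\}}$ gives $(\lambda+k)\fm_\beta(k)h(k)=(k+1)\fm_\beta(k+1)h(k+1)$; since $\fm_\beta(k+1)/\fm_\beta(k)=(k+\beta)/(k+1)$, this simplifies to $h(k+1)=\tfrac{\lambda+k}{\beta+k}h(k)$, whence $h(n)=h(0)\prod_{j=0}^{n-1}\tfrac{\lambda+j}{\beta+j}\asymp h(0)\,n^{\lambda-\beta}$, so that $\sum_n\fm_\beta(n)|h(n)|^{2}\asymp|h(0)|^2\sum_n n^{2\lambda-\beta-1}=\infty$ unless $h(0)=0$, i.e.\ $h\equiv0$. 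Hence the range is dense and $\esFf$ is a core, finishing the proof.

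The only genuine difficulty is the step of upgrading the formal (pointwise) $t$-derivative of $\discret{D}_{e^{-t}}f$ to an $\esL^2(\fm)$ limit: the thinning kernel instantly spreads a finitely supported $f$ over all large $n$, and one must show this tail is negligible — which is precisely what the factor $(1-e^{-t})^{n-m}$, quadratically small in $t$ and geometrically small in $n$ on the range $m\le n$, delivers. A secondary subtlety is that $\esFf$ is not invariant under $\discret{D}_{e^{-t}}$, so the core property cannot come from the usual "dense and invariant" criterion and instead relies on the range-density computation above, where the exact ratio of consecutive values of $\fm_\beta$ is what makes the putative orthogonal vector fail to be square-integrable.
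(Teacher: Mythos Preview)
Your proof is correct and takes a genuinely more elementary route than the paper's. For both the continuity and the generator limit, the paper bounds the tail $\sum_{m\leq n-3}(\cdots)$ via Cauchy--Schwarz against $\discret{D}_{e^{-t}}f^2$, then passes to the dual $\discret{D}_{e^{-t}}^*$ (Lemma~\ref{dualdD}) and computes moment-generating functions of negative binomials to show $\discret{D}_{e^{-t}}^*h_t(n)\to0$ pointwise; this forces the three-term split $m\in\{n,n-1,n-2\}$ because the Cauchy--Schwarz bound \emph{fails} with only two terms (the paper remarks on this explicitly). You instead exploit the finite support of $f\in\esFf$ directly: split in $n$ rather than $m$, handle $n\leq N+1$ by pointwise Taylor expansion on a finite set, and control $n>N+1$ by the crude bound $(1-e^{-t})^{n-m}\leq t^2\,2^{-(n-m-2)}$, which yields a summable geometric tail against the polynomial weight $\fm(n)\binom{n}{m}^2$. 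This sidesteps all the negative-binomial calculus. You also supply something the paper only asserts without argument, namely that $\esFf$ is a core: your range-density computation for $(\lambda-\discret{D})$ with $\lambda>\beta$ is clean and uses exactly the recursion $\fm_\beta(k+1)/\fm_\beta(k)=(k+\beta)/(k+1)$. The trade-off is that the paper's duality approach would generalise more readily to test functions not of compact support, whereas your argument is tailored to $\esFf$---but since $\esFf$ is all that is needed here, your version is both shorter and more transparent.
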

\proof
 Due to the  semi-group property,
  it is sufficient
to check the continuity at $t=0$. Namely,
 we want to prove that for any $f\in\esL^2(\fm)$,
\bqn{contd}
\lim_{t\ri 0_+} \sum_{n\in\ZZ_+} \lt((\discret{D}_{e^{-t}}-\discret{D}_{\exp(-0)})f(n)\rt)^2\,\fm(n)&=&0.\eqn
Due to the continuity of the operators $\discret{D}_{e^{-t}}$ for $t\geq 0$ (and to the bounds on their norms given in Lemma~\ref{dualdD}),
it is enough to prove this convergence for $f$ in the core $\esFf$.
For given $n\in\ZZ_+$, we have
\bq
\lefteqn{\lt((\discret{D}_{e^{-t}}-\discret{D}_{\exp(-0)}) f(n)\rt)^2}\\&=&\lt((\discret{D}_{e^{-t}}-\discret{D}_{1})f(n)\rt)^2\\
&=&\lt(( e^{-nt}-1)f(n)+\sum_{m=0}^{n-1}\binom{n}{m} e^{-mt}(1-e^{-t})^{n-m}f(m)\rt)^2\\
&\leq & 2(( e^{-nt}-1)f(n))^2+2\lt(\sum_{m=0}^{n-1}\binom{n}{m} e^{-mt}(1-e^{-t})^{n-m}f(m)\rt)^2.\eq
Since $f(n)$ vanishes except for a finite number of $n\in\ZZ_+$, it appears that
\bq
\lim_{t\ri 0_+} \sum_{n\in\ZZ_+} ( e^{-nt}-1)^2f^2(n) \,\fm(n)&=&0\eq
so to get \eqref{contd}, we must prove that
\bqn{contd2}
\lim_{t\ri 0_+} \sum_{n\in\ZZ_+} \lt(\sum_{m=0}^{n-1}\binom{n}{m} e^{-mt}(1-e^{-t})^{n-m}f(m)\rt)^2\,\fm(n)&=&0.\eqn
Applying Cauchy-Schwarz inequality, we have for any $n\in\ZZ_+$,
\bq
 \lt(\sum_{m=0}^{n-1}\binom{n}{m} e^{-mt}(1-e^{-t})^{n-m}f(m)\rt)^2&\leq & g_t(n) \lt(\sum_{m=0}^{n}\binom{n}{m} e^{-mt}(1-e^{-t})^{n-m}f^2(m)\rt)
 \\&=& g_t(n)\discret{D}_{e^{-t}} f^2(n)
 \eq
where
\bq
\fo n\in\ZZ_+,\qquad g_t(n)&\df& \sum_{m=0}^{n-1}\binom{n}{m} e^{-mt}(1-e^{-t})^{n-m}\\
&=&1-\exp(-tn).\eq
We deduce
\bq
\sum_{n\in\ZZ_+} \lt(\sum_{m=0}^{n-1}\binom{n}{m} e^{-mt}(1-e^{-t})^{n-m}f(m)\rt)^2\,\fm(n) \leq \fm g_t\discret{D}_{e^{-t}} f^2 =\fm \discret{D}^*_{e^{-t}}g_t f^2.\eq
This expression leads us to compute $\discret{D}^*_{e^{-t}}g_t $ with the help of Lemma \ref{dualdD}. We have for any $n\in\ZZ_+$,
\bq
\discret{D}^*_{e^{-t}} g_t (n)&=& {\sigma}^{-\beta} \binom{n+\beta-1}{n}\EE[ g_t(n+B_{n+\beta,1-e^{-t}})]\eq
where $B_{n+\beta,1-e^{-t}}$ is a negative binomial random variable of parameters $n+\beta$ and $1-e^{-t}$.
Note that the moment generating function associated to $B_{n+\beta,1-e^{-t}}$ is well-known:
\bqn{NB}
\nonumber\fo s<-\ln(1-e^{-t}),\qquad \EE[\exp(sB_{n+\beta,1-e^{-t}})]&=&\lt(\frac{1-(1-e^{-t})}{1-(1-e^{-t})\exp(s)}\rt)^{n+\beta}\\
&=&\lt(\frac{e^{-t}}{1-(1-e^{-t})\exp(s)}\rt)^{n+\beta} \hskip-3mm. \eqn
Recalling the form of $g_t$ and taking into account that for $t>0$, $-t<0< -\ln(1-e^{-t}$, we get
\bq
\discret{D}^*_{e^{-t}} g_t(n)&=& e^{t\beta} \binom{n+\beta-1}{n}\lt(1- e^{-nt}\lt(\frac{1}{e^t+e^{-t}-1}\rt)^{n+\beta}\rt).
\eq
For fixed $n\in\ZZ_+$, this expression converges to zero as $t$ goes to zero.
Since $f\in\esFf$, this is sufficient to deduce that $\fm \discret{D}^*_{e^{-t}} g_t f^2$ converges to zero as $t$ goes to zero
and finally that \eqref{contd2} is satisfied.\par\sm
We
compute the generator $\discret{D}$ of the semi-group $(\discret{D}_{e^{-t}})_{t\geq 0}$ in a similar way.
Indeed, it is sufficient to describe its action on $\esFf$, since $\discret{D}$ is the closure of its restriction to this domain.
So fix $f\in\esFf$, we want to show that with $\discret{D} f$ defined as in \eqref{dD}, we have
\bqn{dD2}
\lim_{t\ri 0_+} \sum_{n\in\ZZ_+} \lt(\frac{(\discret{D}_{e^{-t}}-\discret{D}_{1})f(n)}{t}-\discret{D}f(n)\rt)^2\,\fm(n)&=&0.\eqn
To prove this convergence, we write that
for given $n\in\ZZ_+$, we have
\bq
\frac{(\discret{D}_{e^{-t}}-\discret{D}_{1})f(n)}{t}-\discret{D} f (n)&=&V_t f(n)+W_t f(n)\eq
with
\bqn{Vtfn}
\nonumber V_t f (n)&\df&\frac{( e^{-nt}-1)f(n)+ne^{-(n-1)t}(1-e^{-t})f(n-1)}{t}\\ && +\frac{n(n-1)e^{-(n-2)t}(1-e^{-t})^2f(n-2)/2}{t}\\&&-n(f(n-1)-f(n))\\
\nonumber W_t f (n)&\df&\frac1t\sum_{m=0}^{n-3}\binom{n}{m} e^{-mt}(1-e^{-t})^{n-m}f(m).\eqn
As in the proof of the continuity of the semi-group $(\discret{D}_{e^{-t}})_{t\geq 0}$, we will get \eqref{dD2}
as soon as we show that
\bqn{dD3}
\lim_{t\ri 0_+} \sum_{n\in\ZZ_+} \lt(V_t f(n)\rt)^2\,\fm(n)&=&0\\
\label{dD3b}\lim_{t\ri 0_+}\sum_{n\in\ZZ_+} \lt(W_t f (n)\rt)^2\,\fm(n)&=&0.\eqn
Again the sum in \eqref{dD3} contains only a finite number of terms, so it is sufficient to show that
for any $n\in\ZZ_+$, $\lim_{t\ri0_+} V_t f(n)=0$, convergence which is immediate in view of \eqref{Vtfn}.
Of course, \eqref{dD3} holds if $V_t f (n)$ is replaced by the more natural expression
\bq
\frac{( e^{-nt}-1)f(n)+ne^{-(n-1)t}(1-e^{-t})f(n-1)}{t}-n(f(n-1)-f(n))\eq
but \eqref{dD3b} is no longer true when $W_t f (n)$ is replaced by $\frac1t\sum_{m=0}^{n-2}\binom{n}{m} e^{-mt}(1-e^{-t})^{n-m}f(m)$.

To show \eqref{dD3b}, as before, we resort to Cauchy-Schwarz inequality: for any $t>0$ and any $n\in\ZZ_+$
\bq
\lt(W_t f (n)\rt)^2&\leq & h_t(n)\discret{D}_{e^{-t}} f^2 (n)\eq
with
\bq
h_t(n)&=&\frac1{t^2}\sum_{m=0}^{n-3}\binom{n}{m} e^{-mt}(1-e^{-t})^{n-m}\\
&=&\frac1{t^2}\lt(1-e^{-nt}-ne^{-(n-1)t}(1-e^{-t})-\frac{n(n-1)}{2}e^{-(n-2)t}(1-e^{-t})^2\rt)\\
&=&\frac1{t^2}\lt(1-e^{-nt}+\frac{(e^{t}-1)(e^t-3)}{2}ne^{-nt}-\frac{(e^{t}-1)^2}{2}n^2e^{-nt}\rt).\eq
It is thus sufficient to show that \bq
\lim_{t\ri 0_+}\fm h_t\discret{D}_{e^{-t}}f^2 &=&0\eq
to get \eqref{dD3b}. Since $f\in\esFf$, we just need to prove that for any $n\in\ZZ_+$,
\bq
\lim_{t\ri0_+} \discret{D}^*_{e^{-t}} h_t(n)&=&0\eq
and taking into account Lemma \ref{dualdD}, this amounts to
\bqn{htnB}
\lim_{t\ri0_+}\EE[ h_t(n+B_{n+\beta,1-e^{-t}})]&=&0\eqn
where $B_{n+\beta,1-e^{-t}}$ is still a negative binomial random variable of parameters $n+\beta$ and $1-e^{-t}$.
We compute, writing $\hbar_n(t)=t^2\EE[ h_t(n+B_{n+\beta,1-e^{-t}})]$, that
\bqn{deb}
\hbar_n(t)
&=&1-e^{-nt}\EE[ \exp(-tB_{n+\beta,1-e^{-t}})] \nonumber \\ && +
\frac{(e^{t}-1)(e^t-3)}{2}
e^{-nt}\EE[ (n+B_{n+\beta,1-e^{-t}})\exp(-tB_{n+\beta,1-e^{-t}})]\\
\nonumber&&-\frac{(e^{t}-1)^2}{2}e^{-nt}\EE[ (n+B_{n+\beta,1-e^{-t}})^2\exp(-tB_{n+\beta,1-e^{-t}})]\\
\nonumber&=&1-e^{-nt}\Bigg\{\lt(1-\frac{(e^{t}-1)(e^t-3)}{2}n+\frac{(e^{t}-1)^2}{2}n^2\rt)\EE[ \exp(-tB_{n+\beta,1-e^{-t}})]\\
\nonumber&&+\lt((e^{t}-1)^2n-\frac{(e^{t}-1)(e^t-3)}{2}\rt)\EE[ B_{n+\beta,1-e^{-t}}\exp(-tB_{n+\beta,1-e^{-t}})]\\
\nonumber&&-\frac{(e^{t}-1)^2}{2}n^2\EE[ B^2_{n+\beta,1-e^{-t}}\exp(-tB_{n+\beta,1-e^{-t}})]\Bigg\}\\
 \nonumber&=& 1-e^{-nt}\Bigg\{\lt(1+(e^{t}-1)n+\frac{(e^{t}-1)^2}{2}(n^2-n)\rt)\EE[ \exp(-tB_{n+\beta,1-e^{-t}})]\\
 \nonumber&&+\lt(e^t-1+(e^{t}-1)^2(n-1/2)\rt)\EE[ B_{n+\beta,1-e^{-t}}\exp(-tB_{n+\beta,1-e^{-t}})]\\
 \nonumber&&-\frac{(e^{t}-1)^2}{2}n^2\EE[ B^2_{n+\beta,1-e^{-t}}\exp(-tB_{n+\beta,1-e^{-t}})]\Bigg\}.
\eqn
\par
Differentiating \eqref{NB} with respect to $s<-\ln(1-e^{-t})$, we get, writing $E_n(s,t)=\EE[B_{n+\beta,1-e^{-t}}\exp(sB_{n+\beta,1-e^{-t}})]$, that
\bq
E_n(s,t)&=&\partial_s\lt(\frac{e^{-t}}{1-(1-e^{-t})\exp(s)}\rt)^{n+\beta}\\
&=&(n+\beta)\lt(\frac{e^{-t}}{1-(1-e^{-t})\exp(s)}\rt)^{n+\beta-1}\frac{e^{-t}(1-e^{-t})\exp(s)}{(1-(1-e^{-t})\exp(s))^2}\\
&=&(n+\beta)\frac{(1-e^{-t})\exp(s)}{1-(1-e^{-t})\exp(s)}\EE[\exp(sB_{n+\beta,1-e^{-t}})]\\
&=&(n+\beta)\lt(\frac{1}{1-(1-e^{-t})\exp(s)}-1\rt)\EE[\exp(sB_{n+\beta,1-e^{-t}})]
\eq
and, writing $\overline{E}_n(t,s)=\EE[B^2_{n+\beta,1-e^{-t}}\exp(sB_{n+\beta,1-e^{-t}})]$,
\bq\overline{E}_n(t,s)&=&\partial_s \EE[B_{n+\beta,1-e^{-t}}\exp(sB_{n+\beta,1-e^{-t}})]\\
&=&(n+\beta)\lt(\frac{(1-e^{-t})\exp(s)}{(1-(1-e^{-t})\exp(s))^2}\EE[\exp(sB_{n+\beta,1-e^{-t}})]\rt. \\ && +
\lt.\frac{(1-e^{-t})\exp(s)}{1-(1-e^{-t})\exp(s)}\pa_s \EE[\exp(sB_{n+\beta,1-e^{-t}})]\rt)\\
&=&(n+\beta)\frac{(1-e^{-t})\exp(s)}{1-(1-e^{-t})\exp(s)}\lt(\frac{1}{1-(1-e^{-t})\exp(s)}\rt. \\ && +
\lt.(n+\beta)\lt(\frac{1}{1-(1-e^{-t})\exp(s)}-1\rt)\rt)\EE[\exp(sB_{n+\beta,1-e^{-t}})]\\
&=&(n+\beta)\frac{(1-e^{-t})\exp(s)}{1-(1-e^{-t})\exp(s)}\lt(\frac{n+\beta+1}{1-(1-e^{-t})\exp(s)}-(n+\beta)\rt)\\ &&\EE[\exp(sB_{n+\beta,1-e^{-t}})]\\
&=&(n+\beta)\frac{(1-e^{-t})\exp(s)}{(1-(1-e^{-t})\exp(s))^2}\lt(1-(n+\beta)(1-(1-e^{-t})\exp(s))\rt)\\ && \EE[\exp(sB_{n+\beta,1-e^{-t}})]
. \eq
Taking $s=-t$ and writing $\epsilon\df 1-e^{-t}$,
it appears that,
\bq
\EE[\exp(-tB_{n+\beta,1-e^{-t}})]
&=&\lt(\frac{1}{e^t+e^{-t}-1}\rt)^{n+\beta}\\
&=&(1+\epsilon^2+o(\epsilon^2))^{-(n+\beta)}\\
&=&1-(n+\beta)\epsilon^2+o(\epsilon^2)\\
\EE[B_{n+\beta,1-e^{-t}}\exp(tB_{n+\beta,1-e^{-t}})]&=&(n+\beta)\frac{1-e^{-t}}{e^t+e^{-t}-1}\EE[\exp(-tB_{n+\beta,1-e^{-t}})]\\
&=&(n+\beta)\epsilon +o(\epsilon)\\
\EE[B^2_{n+\beta,1-e^{-t}}\exp(tB_{n+\beta,1-e^{-t}})]&=&(n+\beta)(1-(n+\beta))\epsilon +o(\epsilon).\eq
Injecting these approximations in \eqref{deb}, we obtain
\bq
t^2\EE[ h_t(n+B_{n+\beta,1-e^{-t}})]&=& 1-(1-\epsilon)^n\Bigg\{\lt(1+\frac{\epsilon}{1-\epsilon}n+\frac{\epsilon^2}{2}(n^2-n)\rt)(1-(n+\beta)\epsilon^2)\\
&&+\lt(\frac{\epsilon}{1-\epsilon}+\epsilon^2(n-1/2)\rt)(n+\beta)\epsilon\\
&&-\frac{\epsilon^2}{2}n^2(n+\beta)(1-(n+\beta))\epsilon
+o(\epsilon^2)\Bigg\}\\
&=&1-(1-\epsilon)^n\Bigg\{\lt(1+n\epsilon+\frac{\epsilon^2}{2}(n^2+n)\rt)(1-(n+\beta)\epsilon^2)\\
&&+(n+\beta)\epsilon^2
+o(\epsilon^2)\Bigg\}\\
&=&1-\lt(1-n\epsilon+\frac{n(n-1)}{2}\epsilon^2\rt)\lt(1+n\epsilon+(n^2+n)\frac{\epsilon^2}{2}\rt)+o(\epsilon^2)\\
&=&o(\epsilon^2).
\eq
Since $\epsilon$ is equivalent to $t$ as the latter goes to zero, we deduce \eqref{htnB}.
\wwtbp
The previous computations show that it is not always convenient to work with $\esL^2$ spaces, since the assertion of Lemma \ref{appl2} seems obvious
from a martingale problem point of view.

\end{document}